\newtheorem{thmy}{Theorem}
\newenvironment{thmx}{\stepcounter{thm}\begin{thmy}}{\end{thmy}}
\newcommand{\Rmnum}[1]{\expandafter\@slowromancap\romannumeral #1@}
\theoremstyle{plain}
\newtheorem{theorem}{Theorem}[section]
\newtheorem{cor}[theorem]{Corollary}
\newtheorem{prop}[theorem]{Proposition}
\newtheorem{lemma}[theorem]{Lemma}
\newtheorem{definition}[theorem]{Definition}
\newtheorem{remark}[theorem]{Remark}
\newtheorem{example}[theorem]{Example}
\newtheorem{proposition}[theorem]{Proposition}
\newtheorem*{claim}{Claim}
\begin{document}
\title[]{On fast Lyapunov spectra for Markov-R\'{e}nyi maps}

\author{Lulu Fang}
%\address{School of Mathematics and Statistics, Nanjing University of Science and Technology, Nanjing, Jiangsu 210094, China}
%\email{fanglulu@njust.edu.cn}

\author{Carlos Gustavo Moreira}
%\address{Department of Mathematics \& SUSTech International Center for Mathematics, Southern University of Science and Technology, Shenzhen, Guangdong 518055, China and Institute of Pure and Applied Mathematics (IMPA), Rio de Janeiro, RJ 22.460, Brazil}
%\email{gugu@impa.br}

\author{Zhichao Wang}
%\address{Department of Mathematics, Southern University of Science and Technology, Shenzhen, Guangdong 518055, China}
%\email{12112219@mail.sustech.edu.cn}

\author{Yiwei Zhang}
%\address{Department of Mathematics \& SUSTech International Center for Mathematics, Southern University of Science and Technology, Shenzhen, Guangdong 518055, China}
%\email{zhangyw@sustech.edu.cn}

\subjclass[2010]{Primary: 37D20, 37D25; Secondary: 37E05, 11K55, 28A80}
\keywords{Markov-R\'{e}nyi maps, fast Lyapunov exponents, multifractal analysis, Hausdorff dimension}

\begin{abstract}
  In this paper, we study the multifractal analysis for Markov-R\'{e}nyi maps, which form a canonical class of piecewise differentiable interval maps, with countably many branches and may contain a parabolic fixed point simultaneously, and do not assume any distortion hypotheses. We develop a geometric approach, independent of thermodynamic formalism, to study the fast Lyapunov spectrum for Markov-R\'{e}nyi maps. Our study can be regarded as a refinement of the Lyapunov spectrum at infinity. We demonstrate that the fast Lyapunov spectrum is a piecewise constant function, possibly exhibiting a discontinuity at infinity. Our results extend the works in \cite[Theorem 1.1]{FLWW13}, \cite[Theorem 1.2]{LR}, and \cite[Theorem 1.2]{FSW} from the Gauss map to arbitrary Markov-R\'{e}nyi maps, and highlight several intrinsic differences between the fast Lyapunov spectrum and the classical Lyapunov spectrum. Moreover, we establish the upper and lower fast Lyapunov spectra for Markov-R\'{e}nyi maps.

\end{abstract}

\maketitle

\tableofcontents

\section{Introduction}
\subsection{Backgrounds}
Dynamical systems with strong hyperbolicity typically exhibit a high degree of mixing, resulting in the separation of nearby orbits. In such systems, the intricate orbit structure necessitates a quantitative measure of their asymptotic separation rates.
This behavior is precisely captured by Lyapunov exponents, which describe the exponential rate at which infinitesimally close orbits diverge over time.

Let $I\coloneqq[0,1]$, and let $T: I\to I$ be a piecewise differentiable interval map. The \emph{Lyapunov exponent} of $T$ at a point $x\in I$ is defined as
\begin{equation}\label{equ:Lyapunov exponent}
\lambda(x)\coloneqq\lim_{n \to \infty}\frac{1}{n}\log |(T^n)^\prime(x)|,
\end{equation}
whenever the limit exists. By Birkhoff's ergodic theorem, if $\mu$ is an ergodic $T$-invariant probability measure such that $\int \log|T^\prime|d\mu <\infty$, then $\lambda(x) = \int \log|T^\prime|d\mu$ for $\mu$-almost every $x\in I$. However, in general, the Lyapunov exponent may attain a continuum of values, forming an entire interval. This motivates the study of the \emph{level sets}
\begin{equation}\label{equ_levelset}
J(\alpha)\coloneqq \left\{x\in I: \lambda(x)=\alpha\right\},
\end{equation}
and the associated \emph{Lyapunov spectrum}
\begin{equation}\label{equ_Lyapunovspectrum}
L(\alpha)\coloneqq \dim_{\rm H}J(\alpha),
\end{equation}
where $\dim_{\rm H}$ denotes the Hausdorff dimension (see \cite{Fal} for the definition).

The Lyapunov spectrum has been extensively studied for several important classes of piecewise differentiable maps.
For finitely branched conformal expanding maps, Weiss \cite{Wei} established a connection between the Lyapunov exponent and the pointwise dimension of a Gibbs measure. Applying the multifractal analysis of pointwise dimensions developed by Pesin and Weiss \cite{PW97}, he showed that the Lyapunov spectrum is real-analytic and defined on a bounded interval. Similar analyses have been extended to non-uniformly expanding systems, such as the Manneville-Pomeau map, where the Lyapunov spectrum remains defined on a bounded interval but may exhibit points of non-analyticity (see \cite{PW}). Gelfert and Rams \cite{GR} studied the Lyapunov spectrum for a class of interval maps with parabolic periodic points.
Further results for finite-branch Markov interval maps can be found in \cite{BS, FLW, JJOP, Nak, PS, TV03}.

In contrast, when $T$ has countably many branches, the Lyapunov spectrum exhibits behaviors significantly different from those observed in the finite-branch case. Pollicott and Weiss \cite{PW} studied the Lyapunov spectrum for the Gauss map, which is a  uniformly expanding map with countably many branches. They demonstrated that the Lyapunov spectrum is real analytic, but it has an unbounded domain. Subsequent works by Kesseb\"{o}hmer and Stratmann \cite{KS07}, and Fan et al. \cite{FLWW09} provided a complete characterization of the Lyapunov spectrum for the Gauss map. For non-uniformly expanding maps, Iommi \cite{Iommi} showed that the Lyapunov spectrum has an unbounded domain, and there might exist a non-differentiability  in the domain. For additional multifractal analysis of the Lyapunov spectrum in Markov interval maps with countably many branches, including extensions and related results, we refer to \cite{FJLR,IJ,JT,KMS,MU,Rush} and the references therein.

In summary, for interval maps with countably many branches, the domain of the Lyapunov spectrum is often unbounded, reflecting the possibility of infinite Lyapunov exponents. This indicates that, for certain points, the divergence of nearby orbits occurs at a rate faster than any exponential scale. Hence, the  Lyapunov exponent becomes insufficient to describe such rapid behaviors.

To overcome this limitation and provide a finer understanding of orbit separation beyond exponential growth, we introduce the concept of the fast Lyapunov exponent. This quantity is designed to capture the super-exponential separation of orbits and enables a refined multifractal analysis of the corresponding level sets in terms of Hausdorff dimension.

Let us now define the dynamical systems under consideration and present the main results of this paper.

\subsection{Markov-R\'{e}nyi maps}\label{subsec_MR}

%Consider a countable collection $\{I_n\}_{n \in \mathbb N}$ of disjoint open subintervals of $I=[0,1]$ satisfying the \emph{adjacency property}: for every $n\in \mathbb N$, the intervals $I_n$ and $I_{n+1}$ share a common endpoint, i.e., $\partial I_n \cap \partial I_{n+1} \neq \emptyset$.
%

Inspired by the works of Iommi \cite{Iommi}, Pollicott and Weiss \cite{PW}, and Sarig \cite{Sar09}, we propose the following definition of the Markov-R\'{e}nyi map.

\begin{definition}\label{def:MR}
Let $\{I_n\}_{n \in \mathbb N}$ be a collection of disjoint open subintervals of $I=[0,1]$.
A piecewise differentiable interval map $T: \cup_{n \in \mathbb N}\overline{I}_n \to I$ is called a \emph{Markov-R\'{e}nyi map} associated with $\{I_n\}_{n \in \mathbb N}$ if the following conditions hold:
\begin{itemize}
    \item[(1).] for every $n\in \mathbb{N}$, the intervals $I_n$ and $I_{n+1}$ share a common endpoint i.e., $\partial I_{n}\cap\partial I_{n+1}$ is a singleton;
     \item[(2).] for every $n\in \mathbb{N}$, the restriction $T|_{I_n}$ extends to a $C^{1}$ diffeomorphism $T_n$ on an open neighborhood of $\overline{I}_n$;
    \item[(3).] there exists a point $p \in \cup_{n \in \mathbb N}\overline{I}_n$ and an integer $m\in \mathbb{N}$ such that $T(p)=p$, $|T^\prime(p)| \geq 1$, and $|(T^m)^\prime(x)|>1$ for all $x\in \cup_{n \in \mathbb N}\overline{I}_n\setminus \{p\}$;
    \item[(4).] $\overline{T(I_n)}=[0,1]$ for every $n\in \mathbb{N}$;
    \item[(5).] there exist two constants $\gamma>1$ and $C>1$ such that for all $n \in \mathbb{N}$ and $x \in \overline{I}_n$, we have
    \begin{equation*}
        C^{-1}n^{\gamma}\leq |T^\prime_n(x)|\leq Cn^{\gamma}.
    \end{equation*}
\end{itemize}
\end{definition}

\begin{remark}
In comparison with the definition used in \cite[Section 2]{Iommi}, Definition \ref{def:MR} does not require the Markov-R\'{e}nyi map to satisfy any distortion hypotheses. In particular, Definition \ref{def:MR} includes the Markov-R\'{e}nyi maps without tempered distortion condition \footnote{The tempered distortion condition was introduced in the works of Gelfert and Rams \cite{GR1,GR}, and is treated as one of the mild distortion conditions. It states that there exists a sequence of positive numbers $\{\rho_{n}\}_{n\in\mathbb{N}}$ decreasing to zero, such that
\begin{equation}\label{equ:tempered}
\sup_{(i_{1},\dots,i_{n})\in\mathbb{N}^{n}}\sup_{x,y\in I_{n}(i_{1},\dots,i_{n})}\frac{|(T^{n})'(x)|}{|(T^{n})'(y)|}\leq \exp(n\rho_{n}), \quad \forall n\in\mathbb{N},
\end{equation}
where $I_{n}(i_{1},\dots,i_{n})$ denotes the cylinder set of order $n$ defined in \eqref{def:Cylinder}. The tempered distortion condition can be yielded in many systems, such as the piecewise differentiable interval maps that satisfy the R\'{e}nyi condition, see \cite{PW}.}.
\end{remark}

A point $p\in \cup_{n \in \mathbb N}\overline{I}_n$ such that $T(p)=p$ and $|T^\prime(p)| = 1$ is called a \emph{parabolic fixed point}. We say $T$ is \emph{uniformly expanding} if there exist $m\in \mathbb{N}$ and $c>1$ such that $|(T^m)^\prime(x)|\geq c$ for all $x\in \cup_{n \in \mathbb N}\overline{I}_n$. Of course, if $T$ is uniformly expanding, then it does not contain any parabolic fixed points.

There are two important examples of Markov-R\'{e}nyi maps in the study of (backward) continued fractions, which exhibit distinct behaviors in their Lyapunov spectra.

\begin{example}[The Gauss map]
The Gauss map $G: [0,1] \to [0,1]$ is defined by $G(0)\coloneqq 0$ and
\[
G(x)\coloneqq
    \frac{1}{x}-  \left\lfloor\frac{1}{x}\right\rfloor,\quad \forall x\in (0,1],
\]
where $\lfloor \cdot\rfloor$ denotes the integer part of a number. It is a uniformly expanding Markov-R\'{e}nyi map.
In this case, $I_n=(\frac{1}{n+1}, \frac{1}{n})$ for all $n \in \mathbb N$, and the parameter $\gamma=2$ in Hypothesis $(5)$ of Definition \ref{def:MR}.
\end{example}

\begin{example}[The R\'{e}nyi map]
The R\'{e}nyi map $R: [0,1] \to [0,1]$ is defined by $R(1)\coloneqq 0$ and
\[
R(x)\coloneqq \frac{1}{1-x}-  \left\lfloor\frac{1}{1-x}\right\rfloor,\quad \forall x\in [0,1).
\]
It is a Markov-R\'{e}nyi map with a parabolic fixed point at zero. In this case, $I_n=(\frac{n-1}{n}, \frac{n}{n+1})$ for all $n \in \mathbb N$, and the parameter $\gamma=2$ in Hypothesis $(5)$ of Definition \ref{def:MR}.

In contrast to the Gauss map, the R\'{e}nyi map has complexity associated with possessing both the existence of a parabolic fixed point and countable many branches. It is the interplay between these two features that makes $R(x)$ admitting independent interests (see \cite{AF,BL,GH,Renyi}).
\end{example}

Under the assumption that the Markov-R\'{e}nyi map $T$ satisfies the tempered distortion property (as in \eqref{equ:tempered}), the Lyapunov spectrum of Markov-R\'{e}nyi maps was established by Iommi \cite{Iommi}, who characterized the spectrum in terms of the Legendre-Fenchel transform of the geometric pressure function.

\begin{theorem}[{\cite[Theorem 4.1]{Iommi}}]\label{IomLS}
Let $T$ be a Markov-R\'{e}nyi map satisfying the tempered distortion property. Then the domain of the Lyapunov spectrum $L(\cdot)$ is an unbounded subinterval of $[0,\infty)$ and
$$
L(\alpha)=\frac{1}{\alpha} \inf_{t \in \mathbb R}\big\{P(t) +t\alpha\big\},
$$
where $P(t)$ denotes the geometric pressure function of $T$, i.e., the topological pressure for $T$ and the potential $-t\log|T'|$.
\end{theorem}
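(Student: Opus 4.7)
The plan is to carry out the classical multifractal formalism for countable Markov systems, pairing a covering-based upper bound with a lower bound that exhibits measures supported on $J(\alpha)$. The central identity driving both directions is that, for any ergodic $T$-invariant probability measure $\mu$ with $\int\log|T'|\,d\mu<\infty$, one has
\[
\lambda(\mu)=\int\log|T'|\,d\mu,\qquad \dim_{\rm H}\mu=\frac{h(\mu)}{\lambda(\mu)},
\]
the second being the entropy-dimension formula for conformal systems. Consequently, for an equilibrium state $\mu_t$ of the potential $-t\log|T'|$ the variational principle gives $\dim_{\rm H}\mu_t=(P(t)+t\lambda(\mu_t))/\lambda(\mu_t)$. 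The unboundedness of the domain of $L(\cdot)$ is a separate consequence of Hypothesis~(5): on $I_n$ the derivative is comparable to $n^\gamma$, so orbits visiting ever higher-indexed branches yield points with arbitrarily large Lyapunov exponent.

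For the upper bound, I would fix $t\in\mathbb{R}$ and cover $J(\alpha)$ by cylinder sets of level $n$. Using the tempered distortion hypothesis, the diameter of any cylinder containing $x\in J(\alpha)$ is comparable to $|(T^n)'(x)|^{-1}\approx e^{-n\alpha}$ up to a subexponential factor $e^{n\rho_n}$. Raising these diameters to the power $s=(P(t)+t\alpha)/\alpha+\varepsilon$ and summing over all level-$n$ cylinders yields, up to subexponential errors, a multiple of the partition sum $Z_n(t)\,e^{-n(P(t)+t\alpha+\varepsilon\alpha)}$, which tends to zero by definition of $P(t)$. Letting $\varepsilon\to 0$ and optimizing over $t$ gives $L(\alpha)\le\frac{1}{\alpha}\inf_{t\in\mathbb{R}}\{P(t)+t\alpha\}$.

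For the matching lower bound, at each $\alpha$ where $P$ is differentiable I would choose $t=t_\alpha$ with $P'(t_\alpha)=-\alpha$. Using the equilibrium-state machinery for countable Markov shifts (following Sarig \cite{Sar09}), one obtains an ergodic $\mu_{t_\alpha}$ with $\lambda(\mu_{t_\alpha})=\alpha$ and $h(\mu_{t_\alpha})=P(t_\alpha)+t_\alpha\alpha$; by Birkhoff's theorem $\mu_{t_\alpha}(J(\alpha))=1$, and the entropy-dimension formula then gives $L(\alpha)\ge\dim_{\rm H}\mu_{t_\alpha}=(P(t_\alpha)+t_\alpha\alpha)/\alpha$. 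At the phase-transition value of $t$ induced by the parabolic fixed point, no honest equilibrium state need exist; there I would truncate to finite-branch subsystems $T_N$ obtained by restricting to $\bigcup_{n\le N}\overline{I}_n$, apply the finite-alphabet multifractal formalism, and pass to the limit using $P_N(t)\uparrow P(t)$.

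The main obstacle is the interplay between the parabolic fixed point and the unboundedness of $\log|T'|$ forced by Hypothesis~(5). Near the parabolic point $\log|T'|$ vanishes, while on the $n$-th branch $\log|T'_n|\asymp\gamma\log n$; together these produce a phase transition in $P(t)$ and an unbounded spectrum, and they rule out a direct application of the finite-alphabet formalism. The tempered distortion hypothesis is essential precisely here: it absorbs the failure of bounded distortion caused by the parabolic point into a subexponential correction, keeping cylinder diameters comparable to $|(T^n)'|^{-1}$ on the exponential scale and hence guaranteeing that $P(t)$ is the correct thermodynamic object whose Legendre transform produces $L(\alpha)$.
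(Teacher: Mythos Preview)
The paper does not prove this theorem: it is quoted verbatim as \cite[Theorem 4.1]{Iommi} and serves only as background for the discussion of the Lyapunov spectrum preceding the paper's own results on the \emph{fast} Lyapunov spectrum. There is therefore no proof in the paper against which to compare your proposal.

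That said, your outline is the standard route and is essentially the one Iommi follows in \cite{Iommi}: a covering argument for the upper bound using tempered distortion to control cylinder diameters, and a variational lower bound via equilibrium states for $-t\log|T'|$ on the countable Markov shift, with finite-branch approximation to handle the parabolic phase transition. One point to tighten: in your upper-bound sketch you cover $J(\alpha)$ by \emph{all} level-$n$ cylinders and then bound the sum by $Z_n(t)e^{-n(P(t)+t\alpha+\varepsilon\alpha)}$; but the diameter estimate $|I_n(i_1,\dots,i_n)|\approx e^{-n\alpha}$ only holds for cylinders that actually meet $J(\alpha)$, so you must restrict the sum to those cylinders and use that, on them, $|(T^n)'|^{-1}$ is close to $e^{-n\alpha}$. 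The correct bookkeeping is to write $|I_n|^s\approx |(T^n)'|^{-s}=|(T^n)'|^{-t}\cdot|(T^n)'|^{-(s-t)}$ and use $|(T^n)'|\approx e^{n\alpha}$ only on the second factor, which then produces the $e^{-n(s-t)\alpha}$ you want against the full partition sum $Z_n(t)$. With that adjustment the argument goes through as you describe.
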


A detailed analysis of the geometric pressure function $P(\cdot)$, including its regularity, monotonicity, and convexity properties, is provided in \cite[Section 3]{Iommi}. Based on the behavior of the geometric pressure function and Theorem \ref{IomLS}, we can derive several properties of the Lyapunov spectrum.
In particular, for the Gauss map, the Lyapunov spectrum is defined on the unbounded interval $[2\log ((1+\sqrt{5})/2),\infty)$, and attains a unique maximum at $\pi^2/(6\log 2)$. Moreover, it is real-analytic, strictly increasing on the interval $[2\log ((1+\sqrt{5})/2), \pi^2/(6\log 2)]$, and strictly decreasing on $[\pi^2/(6\log 2),\infty)$. For the R\'{e}nyi map, the domain of the Lyapunov spectrum is the interval $[0,\infty)$. In this case, the Lyapunov spectrum is real-analytic, strictly decreasing on $[0,\infty)$, and has a unique maximum at zero.

The graphs of the Lyapunov spectra for $G$ and $R$ are shown in Figure \ref{GRmaps}.
\begin{figure}[htbp]
  \centering
  \begin{subfigure}[b]{0.45\textwidth}
    \includegraphics[width=\textwidth]{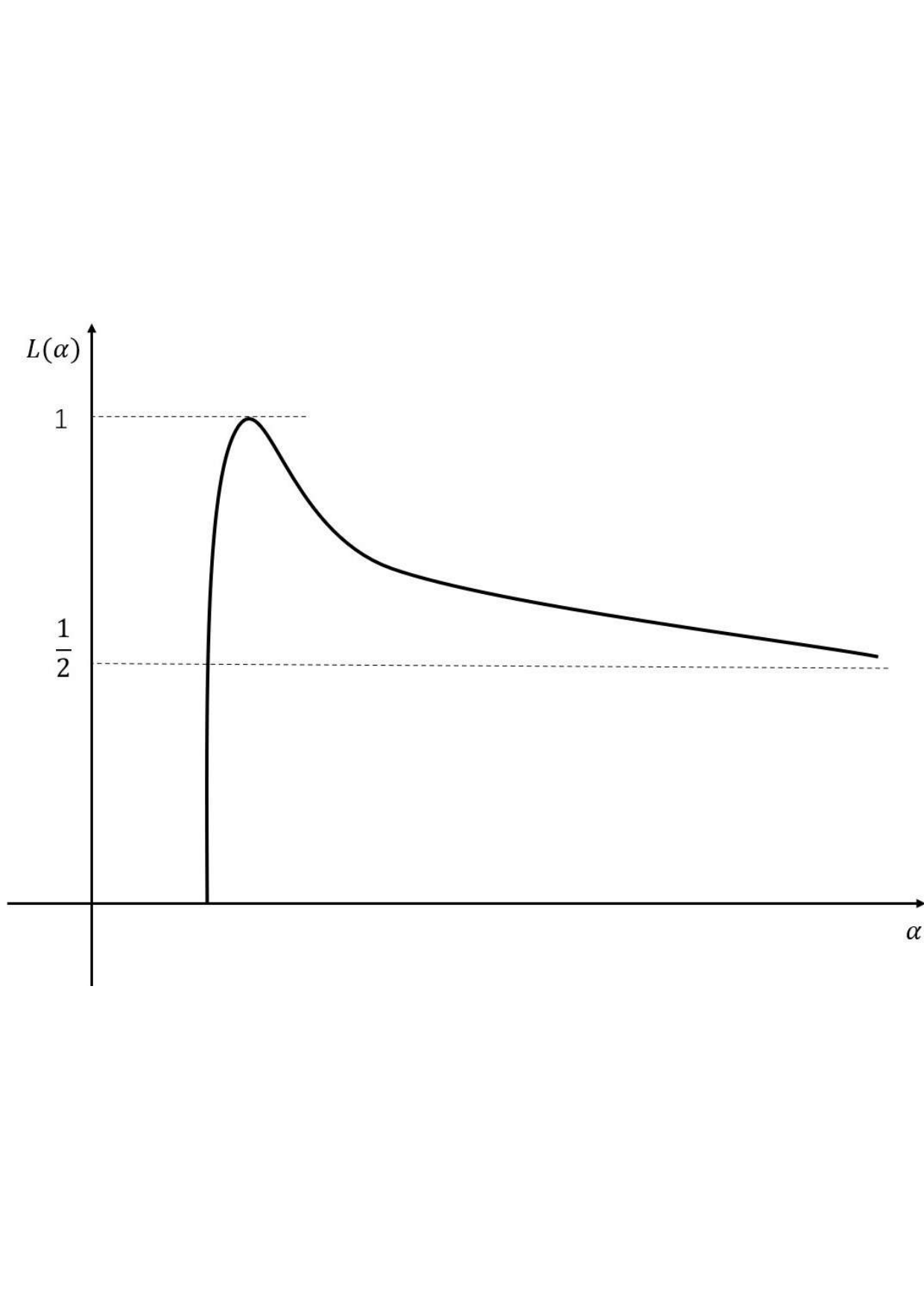}
   % \caption{Image A}
  \end{subfigure}
  \hfill
  \begin{subfigure}[b]{0.45\textwidth}
    \includegraphics[width=\textwidth]{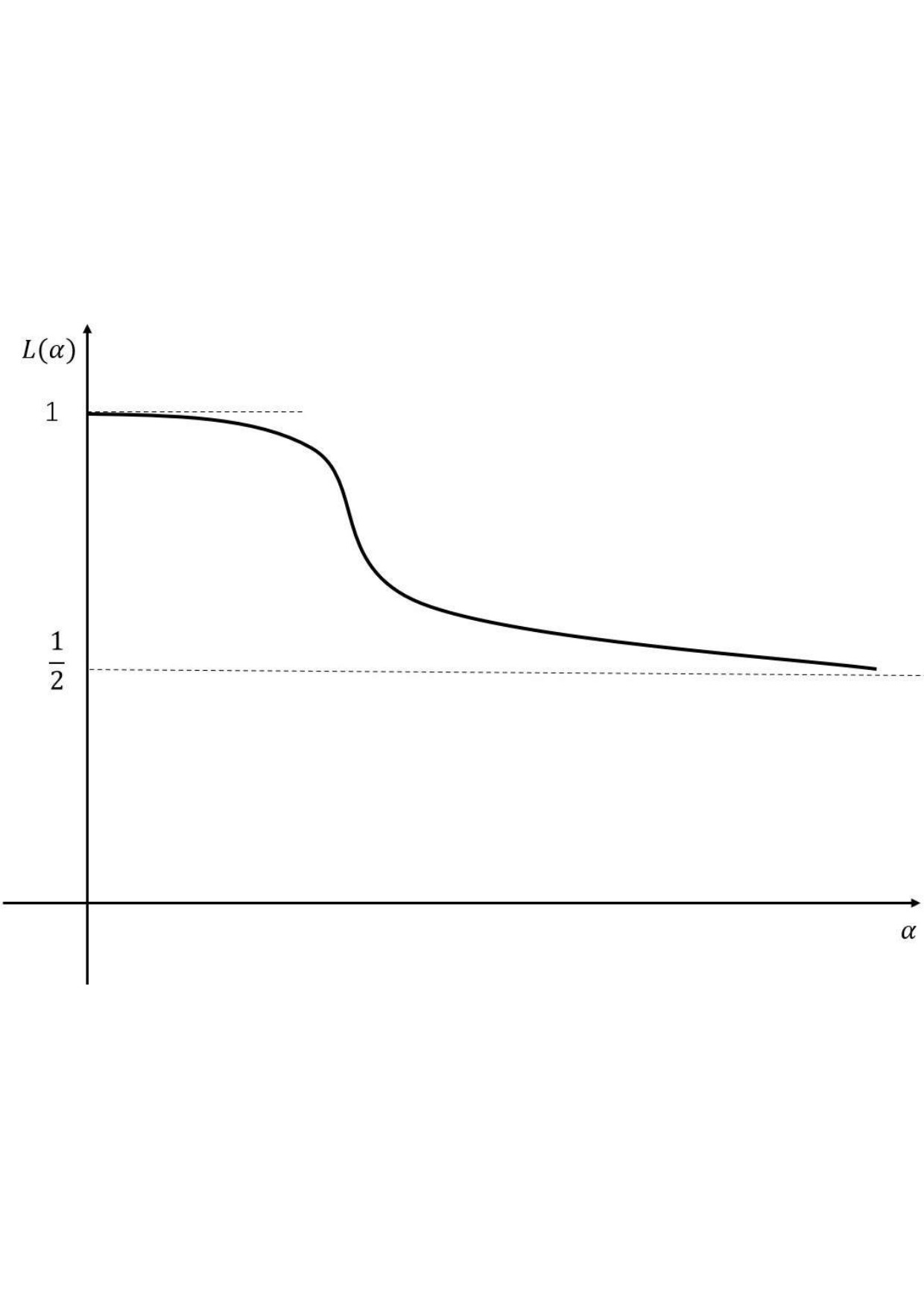}
    %\caption{Image B}
  \end{subfigure}
  \caption{Left: Lyapunov spectrum of $G$ \quad Right: Lyapunov spectrum of $R$}\label{GRmaps}
\end{figure}

\noindent It is evident that the Lyapunov spectra of $G$ and $R$ exhibit horizontal asymptotes at $1/2$. In fact, by Hypothesis (5) of Definition \ref{def:MR}, we obtain
\begin{equation}\label{PB}
P(t) \geq \log \sum^\infty_{n=1} n^{-t\gamma},
\end{equation}
which implies that $P(t)$ has a logarithmic singularity at $t=1/\gamma$. From Theorem \ref{IomLS}, we conclude that
$$
\lim_{\alpha \to \infty}L(\alpha)=\frac{1}{\gamma}.
$$

This naturally leads to the following question:
\begin{itemize}
    \item[(Q1)] What is the value of $L(\infty)$, and is the Lyapunov spectrum continuous at infinity?
    \end{itemize}
An affirmative answer of $(Q1)$ is known in the case of the Gauss map (see \cite[Theorem 1.3]{FLWW09}) and the uniformly expanding Markov-R\'{e}nyi map (see \cite[Theorem 7.1]{IJ}) satisfying the R\'{e}nyi condition. We extend these results to arbitrary Markov-R\'{e}nyi maps.

\begin{proposition}\label{LSinfty}
Let $T$ be a Markov-R\'{e}nyi map. Then
$$
L(\infty)=\frac{1}{\gamma}.
$$
\end{proposition}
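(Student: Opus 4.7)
The plan is to reduce the statement to a condition on symbolic digits and then establish matching upper and lower bounds for $\dim_{\rm H}J(\infty)$. Coding each $x\in I$ by $(a_k(x))_{k\geq 1}\in\mathbb{N}^{\mathbb{N}}$ via $a_k(x)=n\Leftrightarrow T^{k-1}(x)\in I_n$, the chain rule and Hypothesis~(5) of Definition~\ref{def:MR} give
$$
\Bigl|\log\bigl|(T^n)'(x)\bigr|-\gamma\sum_{k=1}^{n}\log a_k(x)\Bigr|\leq n\log C,
$$
so $\lambda(x)=\infty$ if and only if $n^{-1}\sum_{k=1}^{n}\log a_k(x)\to\infty$. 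The same bound combined with the mean value theorem also yields the two-sided cylinder estimate
$$
C^{-n}\prod_{k=1}^{n}a_k^{-\gamma}\leq \bigl|C_n(a_1,\dots,a_n)\bigr|\leq C^{n}\prod_{k=1}^{n}a_k^{-\gamma}.
$$

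For the upper bound, fix any $s>1/\gamma$ and choose $t\in(0,\gamma s-1)$, so that $K_t\coloneqq\sum_{a\geq 1}a^{t-\gamma s}<\infty$. For $M>0$ set
$$
E_{M,N}\coloneqq\Bigl\{x\in I:\sum_{k=1}^{n}\log a_k(x)\geq Mn\ \text{for every}\ n\geq N\Bigr\},
$$
so that $J(\infty)\subset\bigcup_{N\geq 1}E_{M,N}$ for each $M>0$. Cover $E_{M,N}$ by the level-$n$ cylinders ($n\geq N$) whose digits satisfy $\sum\log a_k\geq Mn$. The Markov-type bound $\mathbf{1}_{\{\sum\log a_k\geq Mn\}}\leq e^{t(\sum\log a_k-Mn)}$ then gives
$$
\sum|C_n|^{s}\leq \bigl(C^{s}K_t e^{-tM}\bigr)^{n},
$$
which tends to $0$ as $n\to\infty$ once $M$ is chosen large enough; the same choice also forces the cylinder diameters to $0$. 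Hence $\mathcal{H}^s(E_{M,N})=0$, so $\dim_{\rm H}J(\infty)\leq s$, and letting $s\downarrow 1/\gamma$ yields $L(\infty)\leq 1/\gamma$.

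For the lower bound, for each $\varepsilon>0$ I plan to exhibit a Moran-type subset $\Lambda_\varepsilon\subset J(\infty)$ with $\dim_{\rm H}\Lambda_\varepsilon\geq 1/\gamma-\varepsilon$. Concretely, choose a super-exponentially growing sequence of integers $M_k$ (for instance $M_k\geq 2^{2^k}$), set $N_k\coloneqq\lfloor M_k^{1-\gamma\varepsilon}\rfloor$, $\Lambda_k\coloneqq\{M_k,\dots,M_k+N_k-1\}$, and
$$
\Lambda_\varepsilon\coloneqq\bigl\{x\in I:a_k(x)\in\Lambda_k\ \text{for every}\ k\in\mathbb{N}\bigr\},
$$
endowed with the product measure $\mu_\varepsilon$ assigning weight $\prod_{k=1}^{n}N_k^{-1}$ to each admissible level-$n$ cylinder. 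Since $\log M_k\to\infty$, every $x\in\Lambda_\varepsilon$ lies in $J(\infty)$. The conclusion will follow from the mass distribution principle once one establishes $\mu_\varepsilon(B(x,r))\leq K\,r^{1/\gamma-\varepsilon}$ for a uniform constant $K$ and all sufficiently small $r$, after which $\varepsilon\downarrow 0$ and the upper bound yield $L(\infty)=1/\gamma$. The principal obstacle is precisely the absence of any distortion hypothesis in Definition~\ref{def:MR}: the cylinder length estimate above carries an exponentially compounding factor $C^{\pm n}$ which, in contrast to the Gauss-map arguments, is not absorbed by tempered distortion. The super-exponential growth of $M_k$ is designed so that $\prod_{k=1}^{n}M_k^{\gamma\varepsilon}$ dominates $C^{ns}$, and the ball-versus-cylinder comparison in the mass distribution step must be tracked carefully across the possibly widely separated scales $|C_{n+1}(x)|<r\leq|C_n(x)|$, since $M_{n+1}$ is unbounded.
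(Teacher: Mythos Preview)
Your reduction to symbolic digits via Proposition~\ref{l.Transformation} and your upper bound are correct and coincide with the paper's argument in Proposition~\ref{l.Hausdorff_dimension_limsup_log_pi/n}: your Chernoff-type tilting $\mathbf{1}_{\{\sum\log a_k\ge Mn\}}\le e^{t(\sum\log a_k-Mn)}$ is just a repackaging of the paper's device of writing $s=(1+2\epsilon)/\gamma$ and splitting $(i_1\cdots i_n)^{-(1+2\epsilon)}\le K^{-n\epsilon}(i_1\cdots i_n)^{-(1+\epsilon)}$ on the region $\{i_1\cdots i_n\ge K^n\}$.

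For the lower bound the paper takes a noticeably simpler route. Instead of a mass-distribution argument with super-exponential digit blocks, it exhibits the single explicit subset $E(\{e^n\},\{e^n\})=\{x:e^n<a_n(x)\le 2e^n\}\subset J(\infty)$ and computes its dimension via Falconer's nested-interval lemma (\cite[Example~4.6]{Fal}, recorded here as Lemma~\ref{l.lowerbound}). That lemma delivers the dimension as
\[
\liminf_{n\to\infty}\frac{\log(m_1\cdots m_{n-1})}{-\log(m_n\epsilon_n)},
\]
and the key observation is that the distortion factors $C^{\pm n}$ contribute only $O(n)$ to numerator and denominator, while $\sum_{k\le n}\log s_k=\sum_{k\le n}k\asymp n^2$; hence the distortion is absorbed in the limit with no need for super-exponential boosting, and the ball-versus-cylinder comparison you identify as the principal obstacle never arises. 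Your route can be pushed through---the super-exponential growth of $M_k$ does make $\prod_k M_k^{\gamma\varepsilon}$ dominate any $(\mathrm{const})^n$, so one obtains $\mu_\varepsilon(B(x,r))\lesssim r^{1/\gamma-c\varepsilon}$ for some fixed $c>0$, which suffices after $\varepsilon\downarrow0$---but it requires exactly the delicate scale-transition bookkeeping you flag, whereas the paper's gap-lemma approach avoids it entirely.
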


Proposition \ref{LSinfty} indicates that there are uncountably many real numbers with infinite Lyapunov exponent, which raises the question of conducting a more detailed analysis of such numbers. Accordingly, we pose the following question:
\begin{itemize}
    \item[(Q2)] Can the spectrum at Lyapunov exponent infinity be further refined? If so, how does it differ from the Lyapunov spectrum?
\end{itemize}

To address $(Q2)$, we analyze the behavior of the quantity $\log |(T^n)^\prime(x)|$, which diverges at a rate faster than linear.
This motivates the introduction of the fast Lyapunov exponent and the associated fast Lyapunov spectrum.
We remark that the fast Lyapunov exponent was first introduced by Fan et al. \cite{FLWW09,FLWW13} for the Gauss map, and further studied by Liao and Rams \cite{LR} and Fang, Shang and Wu \cite{FSW}.

\subsection{Statement of main results}
Let $T$ be a Markov-R\'{e}nyi map with repeller $\Lambda$ (see Section 2 for its definition), and let $\psi:\mathbb{N}\to\mathbb{R}_{>0}$ be a \emph{scaling function} such that $\psi(n)/n \to \infty$ as $n\to \infty$.

Analogously to \eqref{equ:Lyapunov exponent}, \eqref{equ_levelset} and \eqref{equ_Lyapunovspectrum}, we define the \emph{fast Lyapunov exponent} of $T$ (with respect to $\psi$) at a point $x\in \Lambda$ as
\begin{equation*}\label{equ_fastlypunovexponent}
\lambda_{\psi}(x)\coloneqq \lim_{n \to \infty}\frac{1}{\psi(n)}\log |(T^n)^\prime(x)|,
\end{equation*}
whenever the limit exists. For any $\alpha\in\mathbb{R}_{\geq 0} \cup\{\infty\}$, let $J_{\psi}(\alpha):=\left\{x\in \Lambda: \lambda_{\psi}(x) =\alpha\right\}$,
and define the \emph{fast Lyapunov spectrum} (with respect to $\psi$) as
\begin{equation*}\label{equ_fastLspectrum}
F_{\psi}(\alpha)\coloneqq \dim_{\rm H}J_{\psi}(\alpha).
\end{equation*}

Now, we are ready to present the main results of this paper. We say that two functions $f,g: \mathbb{N}\to\mathbb{R}_{>0}$ are \emph{equivalent} if $f(n)/g(n)\to 1$ as $n\to \infty$. Let
\begin{equation*}\label{equ_constants}
    \beta_\psi \coloneqq \limsup_{n\to \infty }
    \frac{\psi(n+1)}{\psi(n)} \quad \text{and}\quad
    B_\psi\coloneqq
    \limsup_{n\to \infty}\sqrt[n]{\psi(n)}.
\end{equation*}

The fast Lyapunov spectrum for Markov-R\'{e}nyi maps is described below.

\begin{thmx}[The fast Lyapunov spectrum]\label{t.Fast_Lyapunov}
Let $\psi:\mathbb{N}\to\mathbb{R}_{>0}$ be a function satisfying $\psi(n)/n \to \infty$ as $n\to \infty$. Then:
\begin{itemize}
  \item[(i)] when $\alpha =0$, the level set $J_\psi(0)$ has Lebesgue measure $\sum_{n \in \mathbb N}|I_n|$. In particular, $F_{\psi}(0)=1$;
  \item[(ii)] when $0<\alpha <\infty$, the level set $J_{\psi}(\alpha)$ is nonempty if and only if $\psi$ is equivalent to an increasing function. In this case,
\begin{equation*}\label{e.value_fast}
                F_{\psi}(\alpha)=
                \frac{1}{(\gamma-1)\beta_\psi+1};
\end{equation*}
\item[(iii)] when $\alpha =\infty$, we have
\begin{equation*}\label{equ:boudarypoints}
                F_{\psi}(\infty)=\frac{1}{(\gamma-1)B_\psi+1}.
\end{equation*}
\end{itemize}
\end{thmx}

Next, we investigate the upper and lower fast Lyapunov spectra of $T$. Specifically, for any $x\in \Lambda$, let
\begin{equation}\label{equ:upperfastLE}
\overline{\lambda}_{\psi}(x):=\limsup_{n \to \infty}\frac{1}{\psi(n)}\log |(T^n)^\prime(x)|,
\end{equation}
and let $\underline{\lambda}_{\psi}(x)$ be defined analogously by replacing the limit superior in \eqref{equ:upperfastLE} with the limit inferior.
For any $\alpha\in\mathbb{R}_{\geq 0} \cup\{\infty\}$, define
$$
\overline{J}_{\psi}(\alpha)\coloneqq\left\{x\in \Lambda: \overline{\lambda}_{\psi}(x) =\alpha\right\}\quad \text{and}\quad \underline{J}_{\psi}(\alpha)\coloneqq\left\{x\in \Lambda: \underline{\lambda}_{\psi}(x) =\alpha\right\},
$$
and call
\begin{equation*}\label{equ:u&lFLE}
\overline{F}_{\psi}(\alpha)\coloneqq\dim_{\rm H}\overline{J}_{\psi}(\alpha) \quad \text{and}\quad \underline{F}_{\psi}(\alpha)\coloneqq\dim_{\rm H}\underline{J}_{\psi}(\alpha)
\end{equation*}
the \emph{upper} and \emph{lower fast Lyapunov spectra} of $T$, respectively.

To describe the upper and lower fast Lyapunov spectra, let
\begin{equation*}\label{equ_constantsb}
   b_\psi \coloneqq \liminf_{n\to \infty}\sqrt[n]{\psi(n)}.
\end{equation*}

\begin{thmy}[The upper and lower fast Lyapunov spectra]\label{t.Upper_and_lower_fast_Lyapunov_spectrum}
Let $\psi:\mathbb{N}\to\mathbb{R}_{>0}$ be a function satisfying $\psi(n)/n \to \infty$ as $n\to \infty$. Then:
\begin{itemize}
  \item[(i)] when $\alpha =0$, both $\overline{J}_{\psi}(0)$ and $\underline{J}_{\psi}(0)$ have Lebesgue measure $\sum_{n \in \mathbb N}|I_n|$. In particular, $\overline{F}_{\psi}(0)=\underline{F}_{\psi}(0)=1$;
  \item[(ii)] when $0<\alpha \leq \infty$, we have
\begin{equation*}\label{equ:upperandlower1}
\overline{F}_{\psi}(\alpha)=\frac{1}{(\gamma-1)b_\psi+1}
\quad \text{and} \quad
\underline{F}_{\psi}(\alpha)=\frac{1}{(\gamma-1)B_\psi+1}.
\end{equation*}
\end{itemize}
\end{thmy}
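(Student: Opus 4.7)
My plan is to adapt the proof of Theorem \ref{t.Fast_Lyapunov} to the $\limsup$ and $\liminf$ settings, observing that the roles of $b_\psi$ and $B_\psi$ switch between $\overline{F}_\psi$ and $\underline{F}_\psi$ because they correspond to the subsequences of $n$ controlling each spectrum. Throughout the proof I use the symbolic coding $x \leftrightarrow (i_n(x))_{n\ge 1}$ of the repeller $\Lambda$ and two consequences of Hypothesis~(5) of Definition~\ref{def:MR}:
$$\log|(T^n)'(x)| \;=\; \gamma\,S_n(x) \;+\; O(n), \qquad S_n(x)\coloneqq\sum_{k=1}^{n}\log i_k(x),$$
together with the cylinder-length estimate $C^{-n}\prod_{k\le n} i_k^{-\gamma}\le |I_n(i_1,\dots,i_n)|\le C^{n}\prod_{k\le n} i_k^{-\gamma}$. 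Since $\psi(n)/n\to\infty$, the $O(n)$ slack is $o(\psi(n))$, so both upper and lower fast Lyapunov exponents reduce to $\gamma\limsup_n S_n(x)/\psi(n)$ and $\gamma\liminf_n S_n(x)/\psi(n)$. Part~(i) then follows from a standard a.e.\ estimate $\log|(T^n)'(x)|=O(n)$ (obtained either from a finite-Lyapunov-exponent invariant measure or from a Borel--Cantelli argument using $\mathrm{Leb}\{i_n\ge k\}\lesssim k^{1-\gamma}$), which forces $\overline{\lambda}_\psi(x)=\underline{\lambda}_\psi(x)=0$ on a set of Lebesgue measure $\sum_n|I_n|$.

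For the \emph{upper bound} in part~(ii), I fix $\alpha\in(0,\infty]$ and $\varepsilon>0$. Every $x\in\underline{J}_\psi(\alpha)$ belongs to some $E_N\coloneqq\{x:S_n(x)\ge(\alpha-\varepsilon)\psi(n)/\gamma\ \text{for all}\ n\ge N\}$, so it suffices to bound $\dim_{\rm H}E_N$ uniformly in $N$. I choose a subsequence $n_j$ realizing $\sqrt[n_j]{\psi(n_j)}\to B_\psi$ and cover $E_N$ by $n_j$-cylinders whose digits satisfy $\prod i_k\ge e^{(\alpha-\varepsilon)\psi(n_j)/\gamma}$. Using that $\psi(n_j)\sim B_\psi^{n_j}$ and standard multinomial estimates, the resulting $s$-dimensional covering sum tends to zero whenever $s>1/((\gamma-1)B_\psi+1)$. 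The analogous argument for $\overline{F}_\psi(\alpha)$ uses a subsequence realizing $\sqrt[n_j]{\psi(n_j)}\to b_\psi$; there the $\limsup$ condition need only hold at the $n_j$, so the covering is along the subsequence where $\psi$ is smallest relative to exponential growth, yielding $s>1/((\gamma-1)b_\psi+1)$.

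For the \emph{lower bound}, I construct Moran-type Cantor subsets of each level set tailored to the corresponding formula. For $\underline{J}_\psi(\alpha)$: choose a sparse subsequence $\{n_j\}$ with $\sqrt[n_j]{\psi(n_j)}\to B_\psi$, place ``bursts'' $i_{n_j}$ in a dyadic interval of size $\asymp e^{\alpha\psi(n_j)/\gamma}$, and let non-burst digits range over $\{1,\dots,L_n\}$ with $L_n$ calibrated so that $S_n\ge(\alpha-\varepsilon)\psi(n)/\gamma$ for every $n$. A mass distribution with essentially uniform weights on admissible digit sequences gives dimension $1/((\gamma-1)B_\psi+1)$; the denominator arises by balancing the entropy of digit choices against the logarithm of the cylinder length, which is governed by $\sum\log i_k\asymp\alpha\psi(n)/\gamma$. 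The construction for $\overline{J}_\psi(\alpha)$ is parallel but uses a subsequence realizing $b_\psi\le B_\psi$; since bursts are then denser, more non-burst freedom is available between them, producing the larger dimension $1/((\gamma-1)b_\psi+1)$.

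The main technical obstacle is the absence of any distortion hypothesis. Without tempered distortion \eqref{equ:tempered}, the $C^n$ slack in the cylinder-length estimate must be absorbed in every Hausdorff-dimension computation; this is straightforward for the upper bound (where $C^n=e^{o(\psi(n))}$ uniformly) but requires delicate bookkeeping in the mass-distribution argument, where Bowen-type balls must be tracked through the Cantor construction without a bounded distortion constant. A secondary difficulty is the possible parabolic fixed point: the Cantor sets must be arranged so that their orbits do not accumulate at the parabolic point, which I accomplish by excluding from the non-burst digits the branch containing the parabolic fixed point whenever this is needed to maintain the estimate $\log|(T^n)'|\asymp\gamma\sum\log i_k$ uniformly on the set. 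Executing the distortion-free lower-bound construction in the presence of a parabolic point is the principal technical task of the proof.
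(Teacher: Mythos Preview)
Your reduction via Proposition~\ref{l.Transformation} and the treatment of part~(i) match the paper, but your upper bound for $\overline{F}_\psi(\alpha)$ has a genuine gap. You propose to cover along a \emph{fixed} subsequence $n_j$ with $\sqrt[n_j]{\psi(n_j)}\to b_\psi$, writing that ``the $\limsup$ condition need only hold at the $n_j$''. This is false: the condition $\limsup_n S_n(x)/\psi(n)=\alpha$ only guarantees $S_n(x)\ge(\alpha-\varepsilon)\psi(n)$ along a subsequence \emph{depending on $x$}, which need not meet your $n_j$. The paper's route is different: since $b_\psi=\liminf\sqrt[n]{\psi(n)}$ gives $\psi(n)\ge(b_\psi-\varepsilon)^n$ for \emph{all} large $n$, the ``infinitely often'' condition transfers to the \L uczak-type set $\overline{D}(A,b_\psi-\varepsilon)=\{x:\Pi_n(x)\ge A^{(b_\psi-\varepsilon)^n}\ \text{i.o.}\}$. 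Bounding $\dim_{\rm H}\overline{D}(A,c)$ then requires Lemma~\ref{l.Luczak}: if $\Pi_n\ge b^{c^n}$ infinitely often then for any $d<c$ one has $\Pi_{n+1}>\max\{\Pi_n^d,b^{d^{n+1}}\}$ infinitely often, which forces $a_{n+1}$ large relative to $\Pi_n$ and permits a cover by sets $\bigcup_{j>(\Pi_n)^{sd-1}b^{(1-s)d^{n+1}}}I_{n+1}(i_1,\dots,i_n,j)$ summed over \emph{all} $n$. Your ``multinomial estimates at $n_j$'' bypass this and do not furnish a valid cover.

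Two further remarks. First, the parabolic point is a non-issue: Hypothesis~(5) already yields $|\log|(T^n)'(x)|-\gamma S_n(x)|\le n\log C$ uniformly on $\Lambda$ (Proposition~\ref{l.Transformation}), so no branch need be excluded and your ``principal technical task'' evaporates. Second, your lower-bound sketch is too vague where the paper is most delicate. For $\overline{E}_\psi(\alpha)$ one must build a Cantor set on which $S_n/\psi(n)$ has $\limsup$ exactly $\alpha$ while the dimension reaches $1/((\gamma-1)b_\psi+1)$; the paper does this by first constructing a non-decreasing $g_\psi\le\psi$ with $\limsup g_\psi/\psi=1$, $g_\psi(n)/n\to\infty$, and $\limsup g_\psi(n+1)/g_\psi(n)\le b_\psi+\varepsilon$ (a non-trivial combinatorial lemma, see Subsection~\ref{constrction}), and then feeding $s_n=t_n=e^{\alpha(g_\psi(n)-g_\psi(n-1))}$ into Proposition~\ref{p.Hausdorff_dimension_E_s_n_t_n}. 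Your description of ``bursts plus calibrated $L_n$'' does not indicate how the crucial ratio control $g_\psi(n+1)/g_\psi(n)\le b_\psi+\varepsilon$ is obtained.
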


In view of Theorems \ref{IomLS}, \ref{t.Fast_Lyapunov}, and \ref{t.Upper_and_lower_fast_Lyapunov_spectrum}, we identify two key differences between the fast Lyapunov spectrum and the Lyapunov spectrum.
\begin{itemize}
\item[(i)] The (upper and lower) fast Lyapunov spectrum is always a constant function on $\mathbb{R}_{>0} \cup\{\infty\}$ for every Markov-R\'{e}nyi map. Such constants depend merely on two features. Specifically, the dynamical system $T$ contributes through the parameter $\gamma$ in Hypothesis (5) of Definition \ref{def:MR}. The scaling function $\psi$ contributes through $\beta_\psi$ and $B_\psi$ in the fast Lyapunov spectrum, $b_\psi$ in the upper fast Lyapunov spectrum, and $B_\psi$ in the lower fast Lyapunov spectrum. While the Lyapunov spectrum described in Theorem \ref{IomLS}, is varied according to different Markov-R\'{e}nyi maps.
\item[(ii)] The existence of a parabolic fixed point of $T$ has no impact on the (upper and lower) fast Lyapunov spectrum, while the existence of a parabolic fixed point of $T$ plays a significant role on the Lyapunov spectrum.
\end{itemize}
On the other hand, the fast Lyapunov spectrum is discontinuous at zero, and it is continuous at infinity if and only if $\beta_\psi =B_\psi$. In contrast, both the upper and lower fast Lyapunov spectra exhibit a single discontinuity at zero. Moreover, note that $b_\psi\leq B_\psi\leq \beta_\psi$. This implies that the three associated fast Lyapunov spectra can be all distinct. For example,
let $\psi(1):=1$ and
$$
\psi(n):=
\left\{
  \begin{array}{ll}
    (\frac{5}{3})^{k-1}4^{n-(1!+3!+\cdots+(2k-1)!)}3^{1!+3!+\cdots+(2k-1)!}, & \hbox{$n_{2k-1}<n\leq n_{2k}$;} \vspace{0.15cm}\\
    (\frac{5}{3})^{k}4^{2!+4!+\cdots+(2k)!}3^{n- (2!+4!+\cdots+(2k)!)},& \hbox{$n_{2k}<n \leq n_{2k+1}$,}
  \end{array}
\right.
$$
where $n_k:=1!+2!+\cdots+k!$. Then $b_\psi=3, B_\psi=4$, and $\beta_\psi=5$; see \cite{FSW} for further details.
This phenomenon was first observed by Liao and Rams \cite{LR} for the Gauss map. We extend their contribution to arbitrary Markov-R\'{e}nyi maps.

It is worth remarking that the thermodynamic formalism approach employed in \cite{Iommi} for the Lyapunov spectrum is not applicable in proving Theorems \ref{t.Fast_Lyapunov} and \ref{t.Upper_and_lower_fast_Lyapunov_spectrum}. Let us explain as follows: as indicated in \eqref{PB}, the geometric pressure function $P(t)$ exhibits a logarithmic singularity at $t=1/\gamma$. This singularity implies that the pressure function fails to reflect the properties of sets whose Hausdorff dimension is smaller than $1/\gamma$. According to Proposition \ref{LSinfty}, the (upper and lower) fast Lyapunov spectrum is bounded above by $L(\infty)=1/\gamma$. Therefore, the thermodynamic formalism \textbf{cannot be directly employed} to characterize the (upper and lower) fast Lyapunov spectrum in this regime. The key feature of our approach to proving Theorems \ref{t.Fast_Lyapunov} and \ref{t.Upper_and_lower_fast_Lyapunov_spectrum} lies in the symbolic model for the Markov-R\'{e}nyi map and the distribution of the corresponding symbolic codings, which is independent of thermodynamic formalism.

Finally, we emphasize that Hypothesis $(1)$ of Definition \ref{def:MR}, concerning the collection $\{I_n\}_{n\in \mathbb N}$ induces a natural ordering, either from left to right or from right to left, is essential in establishing Theorems \ref{t.Fast_Lyapunov} and \ref{t.Upper_and_lower_fast_Lyapunov_spectrum}. Indeed, the following are two examples, examining the failures of Theorem \ref{t.Fast_Lyapunov}, and \ref{t.Upper_and_lower_fast_Lyapunov_spectrum} from two different perspectives, dissatisfy Hypothesis $(1)$ of Definition \ref{def:MR}, but satisfy the other Hypothesise of Definition \ref{def:MR}. For the sake of the length of the paper, we omit the detailed proof.

\begin{example}
Consider the intervals
\[
I_1 \coloneqq \left(\frac{1}{3}, \frac{2}{3}\right),\ \ I_2 \coloneqq \left(\frac{1}{9}, \frac{2}{9}\right),\ \ I_3 \coloneqq \left(\frac{7}{9}, \frac{8}{9}\right),\ \ I_4 \coloneqq \left(\frac{1}{27}, \frac{2}{27}\right),\ \ I_5 \coloneqq \left(\frac{7}{27}, \frac{8}{27}\right),
\]
\[
I_6 \coloneqq \left(\frac{19}{27}, \frac{20}{27}\right),\ \ I_7 \coloneqq \left(\frac{25}{27}, \frac{26}{27}\right),\ \ \cdots.
\]
These are the open intervals removed at each step in the construction of the standard middle-third Cantor set. Notably, they are not arranged in a left-to-right or right-to-left order within the unit interval. For any $2^{k-1} \leq n <2^k$ with $k \in \mathbb N$, each open interval $I_n$ is of the form
\[
\left(\frac{3j+1}{3^n}, \frac{3j+2}{3^n}\right),
\]
where $j$ is an integer between $0$ and $3^{n-1}-1$ whose base-$3$ representation contains only the digits $0$ and $2$. Define $T$ to be a linear map on each $I_n$, sending the left endpoint of $I_n$ to $0$ and the right endpoint to $1$. In this setting, the parameter $\gamma$ in Hypothesis $(5)$ of Definition \ref{def:MR} is given by $\gamma = \frac{\log 3}{\log 2}$. Regarding the fast Lyapunov spectrum, for any function $\psi:\mathbb{N}\to\mathbb{R}_{>0}$ be a function satisfying $\psi(n)/n \to \infty$ as $n\to \infty$, we obtain the lower bound
\[
F_{\psi}(\infty) \geq \frac{\log 2}{\log 3}.
\]
In particular, choose a function $\psi(n)=2^{2^{2^{2^n}}}$. Since $B_\psi = \infty$ in this case, the conclusion of Theorem \ref{t.Fast_Lyapunov} would imply that $F_{\psi}(\infty)=0$, which is incorrect.
\end{example}
\begin{example}
Consider the intervals
\[
I_1 \coloneqq \left(\frac{2}{3}, 1\right),\ \ I_2 \coloneqq \left(\frac{8}{27}, \frac{1}{3}\right),\ \ I_3 \coloneqq \left(\frac{2}{9}, \frac{7}{27}\right), \ I_4 \coloneqq \left(\frac{26}{243}, \frac{1}{9}\right),\ \ I_5 \coloneqq \left(\frac{8}{81},\frac{25}{243}\right),
\]
\[
I_6 \coloneqq \left(\frac{20}{243}, \frac{7}{81}\right),\ \ I_7 \coloneqq \left(\frac{2}{27}, \frac{19}{243}\right),\ \ \cdots.
\]
These open intervals are arranged in a right-to-left order within the unit interval, but they do not share endpoints. Observe that, for any $2^{k-1} \leq n <2^k$ with $k \in \mathbb N$, each open interval $I_n$ has diameter $3^{-(2k-1)}$. Define $T$ to be a linear map on each $I_n$, sending the left endpoint of $I_n$ to $0$ and the right endpoint to $1$. In this setting, the parameter $\gamma$ in Hypothesis $(5)$ of Definition \ref{def:MR} is given by $\gamma = \frac{2\log 3}{\log 2}$. Regarding the fast Lyapunov spectrum, for the function $\psi(n)=2^n$, we obtain the lower bound
\[
\overline{F}_{\psi}(\alpha) \geq \frac{\log 2}{3\log 3}
\quad \text{and} \quad
\underline{F}_{\psi}(\alpha)\geq \frac{\log 2}{3\log 3}.
\]
However, since $b_\psi = B_\psi = 2$ in this case, the conclusion of Theorem \ref{t.Upper_and_lower_fast_Lyapunov_spectrum} would imply that
\[
\overline{F}_{\psi}(\alpha)=
\underline{F}_{\psi}(\alpha) = \frac{\log 2}{4\log 3-\log 2} < \frac{\log 2}{3\log 3},
\]
which is incorrect.
\end{example}

\medskip

The paper is organized as follows. Section \ref{s.distributiondigits} is devoted to the study of symbolic codings and their distribution for Markov-R\'{e}nyi maps. In Section \ref{s.proofoftheorema}, we provide the proof of Theorem \ref{t.Fast_Lyapunov}. In Section \ref{s.proofoftheoremb}, we present the proof of Theorem \ref{t.Upper_and_lower_fast_Lyapunov_spectrum}. Section \ref{s.appendix} contains an appendix in which we construct a non-decreasing function $g_{\psi}$ with stronger properties than the one in Subsection \ref{constrction}. We believe this construction has an independent interest on its own.

\section{Symbolic codings and their distribution}\label{s.distributiondigits}
In this section, we review the symbolic codings and their distribution for Markov-R\'{e}nyi maps.
We follow the convention $\mathbb{N}=\{1,2,3,\cdots\}$ the set of positive integers, $\mathbb{R}$ the set of real numbers, $\mathbb{R}_{>0}\coloneqq\{x\in \mathbb{R}:x>0\}$ and $\mathbb{R}_{\geq 0}\coloneqq\mathbb{R}_{>0}\cup\{0\}$.
% For positive integers $a$ and $b$, let
% $(a,b)$ be the greatest common divisor
% of $a$ and $b$.
For a subset $A\subseteq \mathbb{R}$, denote by $|A|\coloneqq \sup\{|x-y|:x,y\in A\}$ the diameter of $A$, $A^n \coloneqq \{(x_1,\dots,x_n): x_i \in A, \forall 1\leq i\leq n\}$ the Cartesian product of $A$, $\# A$ the cardinality of $A$, $\overline{A}$ the closure of A, and $\partial A$ the boundary of $A$. For a function $f:A \to \mathbb{R}$, and $B \subseteq A$, let $f|_B$ be the restriction of $f$ on $B$.

\subsection{Symbolic codings}
The Markov structure of the Markov-R\'{e}nyi map implies that it is semi-conjugate to the full shift on a countable alphabet (see, e.g., \cite{PW,Sar09}). By Hypothesis (1) of Definition \ref{def:MR}, it follows that
\[
I^* \coloneqq \bigcup_{n \in \mathbb N} \overline{I}_n
\]
is a subinterval of $[0,1]$. For a Markov-R\'{e}nyi map $T$, let $$\mathcal{Q}\coloneqq \bigcup^\infty_{k=0}T^{-k}\left(\{0,1\} \cup \bigcup_{n \in \mathbb N}\partial I_n\right).$$
Then $\mathcal{Q}$ is a countable subset of $I^*$.
Define $$\Lambda\coloneqq I^*\setminus \mathcal Q$$
as the \emph{repeller} of $T$.
Thus, every $x\in \Lambda$ has a unique \emph{symbolic coding} $(a_1(x),a_2(x),a_3(x),\dots) \in \mathbb N^{\mathbb N}$ since for any $n\in\mathbb N$, there exists a unique interval $I_{a_n(x)}$ such that
\begin{equation}\label{coding}
 T^{n-1}(x) \in I_{a_n(x)}.
\end{equation}

The following result establishes a relationship between $\log|(T^n)'(x)|$ and $\sum_{k=1}^n\log a_k(x)$. It allows us to transform the study of fast Lyapunov exponents to the investigation of the fast growth rate of $\sum_{k=1}^n\log a_k(x)$.

\begin{proposition}\label{l.Transformation}
               Given a Markov-R\'{e}nyi map $T$, for any $x\in \Lambda$ and $n \in \mathbb N$, we have
        \begin{equation*}\label{e.O(n)}
               \left|\gamma\sum_{k=1}^n\log a_k(x)-\log|(T^n)'(x)|\right|
                \leq n\log C,
        \end{equation*}
        where the parameters $\gamma$ and $C$ are given in Hypothesis $(5)$ of Definition \ref{def:MR}.
\end{proposition}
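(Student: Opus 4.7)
The plan is to expand $\log|(T^n)'(x)|$ via the chain rule and then apply Hypothesis (5) of Definition \ref{def:MR} termwise along the orbit.

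First I would note that for $x \in \Lambda \subseteq I^*$, the orbit $T^{k-1}(x)$ lies in $\overline{I}_{a_k(x)}$ for each $k = 1,\dots,n$ by the definition of the symbolic coding in \eqref{coding}. Since $\Lambda$ avoids preimages of boundary points, all derivatives $T'(T^{k-1}(x))$ are well-defined via the local extension $T_{a_k(x)}$ from Hypothesis (2), and the chain rule gives
\begin{equation*}
    \log|(T^n)'(x)| = \sum_{k=1}^{n} \log |T'(T^{k-1}(x))| = \sum_{k=1}^{n} \log|T'_{a_k(x)}(T^{k-1}(x))|.
\end{equation*}

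Next I would invoke Hypothesis (5): for each $k$, since $T^{k-1}(x) \in \overline{I}_{a_k(x)}$,
\begin{equation*}
    C^{-1} a_k(x)^\gamma \;\leq\; |T'_{a_k(x)}(T^{k-1}(x))| \;\leq\; C\, a_k(x)^\gamma.
\end{equation*}
Taking logarithms yields
\begin{equation*}
    \gamma \log a_k(x) - \log C \;\leq\; \log|T'(T^{k-1}(x))| \;\leq\; \gamma \log a_k(x) + \log C.
\end{equation*}
Summing over $k = 1,\dots,n$ and combining with the chain rule identity above gives
\begin{equation*}
    \gamma \sum_{k=1}^{n} \log a_k(x) - n\log C \;\leq\; \log|(T^n)'(x)| \;\leq\; \gamma \sum_{k=1}^{n} \log a_k(x) + n\log C,
\end{equation*}
which is precisely the stated inequality after rearranging.

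There is no real obstacle here; the only subtlety is ensuring that the derivative along the orbit is unambiguously the one-sided derivative coming from the correct branch $T_{a_k(x)}$, which is guaranteed by restricting to $x \in \Lambda = I^* \setminus \mathcal{Q}$ so that no iterate ever lands on an endpoint of some $I_n$. Once this is noted, the proof reduces to a direct termwise estimate.
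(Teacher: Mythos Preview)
Your proof is correct and follows essentially the same approach as the paper: chain rule to write $\log|(T^n)'(x)|$ as a sum along the orbit, then apply Hypothesis~(5) termwise and sum. Your additional remark about well-definedness of the derivative on $\Lambda$ is a nice clarification but not a departure from the paper's argument.
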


\begin{proof}
Given a Markov-R\'{e}nyi map $T$, for any $x\in  \Lambda$ and $n\in \mathbb N$, it follows from the chain rule that
        \begin{equation}\label{e.log_T_n_sum_log_T_k}
                \log|(T^n)'(x)|=
                \sum_{k=1}^n\log
                |T'(T^{k-1}(x))|.
        \end{equation}
For each $k \in \mathbb N$, the integer $a_k(x)$ is uniquely determined by the relation $T^{k-1}(x)\in I_{a_k(x)}$. From Hypothesis (5) in Definition \ref{def:MR}, we derive that
 \begin{equation*}\label{e.estimation_a_k_R^k-1}
                C^{-1}a_k^{\gamma}(x)\leq |T'(T^{k-1}(x))|\leq Ca_k^{\gamma}(x).
 \end{equation*}
That is,
        \begin{equation*}\label{e.estimation_log_a_k_R^k-1}
                \gamma\log a_{k}(x)-\log C\leq \log |T'(T^{k-1}(x))|\leq \gamma\log a_{k}(x)+\log C.
        \end{equation*}
Combining this with \eqref{e.log_T_n_sum_log_T_k}, we conclude that
\begin{equation*}
                \left|\gamma\sum_{k=1}^n\log
                a_k(x)-\log|(T^n)'(x)|\right|
                \leq n\log C.
        \end{equation*}
The proof is completed.
\end{proof}

\subsection{Cylinder sets}

Let $ T$ be a Markov-R\'{e}nyi map. For any $n\in \mathbb N$ and $(i_1,\dots, i_n)\in \mathbb N^n$, denote by
\begin{equation}\label{def:Cylinder}
I_n(i_1,\dots,i_n) \coloneqq\bigcap_{j=1}^{n} T^{-j+1} I_{i_j}
\end{equation}
the \emph{cylinder set} of order $n$ associated with $(i_1,\dots, i_n)$.

Except for a countable set, the cylinder set $I_n(i_1,\dots,i_n)$ coincides with the set of points whose symbolic codings begin with $i_1,\dots, i_n$.
Cylinder sets and their diameters play a fundamental role in the study of the Hausdorff dimension of sets associated with Markov-R\'{e}nyi maps.
The following estimate can be derived from Hypothesis (5) of Definition \ref{def:MR} of Markov-R\'{e}nyi maps.

\begin{proposition}\label{p.cylinder}
For any $n\in \mathbb N$ and $(i_1,\dots, i_n)\in \mathbb N^n$, the cylinder set $I_n(i_1,\dots,i_n)$ is an interval, and its diameter satisfies
\begin{equation*}\label{e.cylinder}
   \frac{C^{-n}}{(i_1 \cdots i_{n})^{\gamma}}
   \leq |I_n(i_1,\dots,i_n)|
   \leq \frac{C^{n}}{(i_1 \cdots i_{n})^{\gamma}},
\end{equation*}
where the parameters $\gamma$ and $C$ are given in Hypothesis (5) of Definition \ref{def:MR}.
\end{proposition}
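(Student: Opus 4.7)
The plan is to establish both assertions by an inductive argument for the interval property, followed by the mean value theorem combined with Hypothesis (5) of Definition \ref{def:MR} for the diameter bound.

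First I would prove by induction on $n$ that $I_n(i_1,\ldots,i_n)$ is an interval. The base case $n=1$ is immediate, since $I_1(i_1)=I_{i_1}$. For the inductive step, decompose
\[
I_n(i_1,\ldots,i_n) \;=\; I_{i_1} \cap T_{i_1}^{-1}\bigl(I_{n-1}(i_2,\ldots,i_n)\bigr).
\]
By Hypotheses (2) and (4), $T_{i_1}$ is a $C^1$ diffeomorphism from a neighborhood of $\overline{I}_{i_1}$ onto a set containing $[0,1]$; in particular it is a homeomorphism between intervals, so the preimage under $T_{i_1}$ of the interval $I_{n-1}(i_2,\ldots,i_n)$ (which is an interval by the induction hypothesis) is again an interval.

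Next, for the diameter estimate I would use the chain rule and the mean value theorem. For any $x\in I_n(i_1,\ldots,i_n)$ and any $1\le k\le n$, the point $T^{k-1}(x)$ lies in $I_{i_k}$, so Hypothesis (5) yields
\[
C^{-1}\, i_k^{\gamma}\;\le\; |T'(T^{k-1}(x))| \;\le\; C\, i_k^{\gamma}.
\]
Multiplying these inequalities over $k$ and invoking the chain rule gives
\[
\frac{C^{-n}}{\,1\,}\,(i_1\cdots i_n)^{\gamma} \;\le\; |(T^n)'(x)| \;\le\; C^{n}\,(i_1\cdots i_n)^{\gamma},
\]
uniformly for every $x$ in the cylinder. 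Since $T^n$ maps $I_n(i_1,\ldots,i_n)$ bijectively onto $T(I_{i_n})$, whose closure is $[0,1]$ by Hypothesis (4), the image has length $1$. The mean value theorem then produces some $\xi \in I_n(i_1,\ldots,i_n)$ with
\[
|I_n(i_1,\ldots,i_n)| \cdot |(T^n)'(\xi)| = 1,
\]
and substituting the two-sided bound on $|(T^n)'(\xi)|$ delivers exactly the inequality asserted in the proposition.

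The only point requiring minor care — and not a genuine obstacle — is verifying that $T^n$ restricts to a bijection from $I_n(i_1,\ldots,i_n)$ onto $T(I_{i_n})$. This follows from iterated application of Hypothesis (4): at each stage $T|_{I_{i_k}}$ surjects onto $[0,1]$, so every point of $T(I_{i_n})$ admits a unique preimage under $T^n$ with the prescribed symbolic address $(i_1,\ldots,i_n)$, which is precisely what allows the MVT argument to be applied to the full cylinder.
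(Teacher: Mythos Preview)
Your proposal is correct and follows essentially the same approach as the paper: the paper establishes the interval property by a forward step-by-step argument (showing $I_{i_1}\cap T^{-1}I_{i_2}$ is an interval mapped by $T$ onto $I_{i_2}$, then $I_{i_1}\cap T^{-1}I_{i_2}\cap T^{-2}I_{i_3}$ is an interval mapped by $T^2$ onto $I_{i_3}$, and so on), while you organize the same induction via the backward recursion $I_n(i_1,\ldots,i_n)=I_{i_1}\cap T_{i_1}^{-1}(I_{n-1}(i_2,\ldots,i_n))$; both arguments rest on Hypotheses (2) and (4). For the diameter, the paper simply invokes Hypothesis (5) together with the mean value theorem, which is exactly the computation you wrote out in detail.
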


\begin{proof}
Fix $n\in \mathbb N$ and $(i_1,\dots, i_n)\in \mathbb N^n$. We first show that the cylinder set is an interval. The proof follows from the arguments provided by Sarig \cite[page 13]{Sar09}. Note that $T_{i_1}$ is $C^1$ monotonic, and one-to-one. By Hypothesis (4) of Definition \ref{def:MR}, we see that $T(I_{i_1}) \supset I_{i_2}$, and so
$$T(I_{i_1}\cap T^{-1}I_{i_2})=T(I_{i_1})\cap I_{i_2} =I_{i_2}.$$ This implies that $I_{i_1}\cap T^{-1}I_{i_2}$ is an interval which is mapped by $T$ onto $I_{i_2}$.
Similarly, note that $T(I_{i_2}) \supset I_{i_3}$ and $T^2(I_{i_1}\cap T^{-1}I_{i_2}) =T(I_{i_2})$. We derive that the restriction of $T^2$ to the interval $I_{i_1}\cap T^{-1}I_{i_2}$ is equal to the restriction of $T_{i_2} \circ T_{i_1}$ to that interval, which is $C^1$ monotonic, and one-to-one. It follows that $I_{i_1}\cap T^{-1}I_{i_2}\cap T^{-2}I_{i_3}$ is an interval which is mapped by $T^2$ onto $I_{i_3}$. Continuing in this way, we see that $I_n(i_1,\dots,i_n)$ defined in \eqref{def:Cylinder} is an interval which is mapped by $T^{n-1}$ onto $I_{i_n}$.

The estimate of the diameter of $I_n(i_1,\dots,i_n)$ is a consequence of Hypothesis (5) of Definition \ref{def:MR} and the mean value theorem.
\end{proof}

\subsection{The distribution of codings}

Let $\{s_n\}_{n\geq 1}$ and $\{t_n\}_{n\geq 1}$ be sequences of real numbers such that $s_n, t_n\geq 2$ for all $n\geq 1$.
Define
\begin{equation}\label{e.def_E_s_n_t_n}
E(\{s_n\},\{t_n\})  \coloneqq
        \left\{x\in  \Lambda:s_n<a_n(x)\leq s_n+t_n,
        \forall n\geq 1\right\}.
\end{equation}

Throughout this section, we assume that the following two hypotheses hold:
\begin{equation}\label{e.sumlog_s_k/n=infty}
        \lim_{n\to \infty}
        \frac{\sum_{k=1}^n\log s_k}
        {n}=\infty
\end{equation}
and
\begin{equation}\label{e.inf_s_n/t_n>c>0}
        \liminf_{n\to \infty}\frac{s_n}{t_n}>0.
\end{equation}
The Hausdorff dimension of $E(\{s_n\},\{t_n\})$ is as follows.

\begin{proposition}\label{p.Hausdorff_dimension_E_s_n_t_n}
        Assume that $\{s_n\}_{n\geq 1}$ and $\{t_n\}_{n \geq 1}$ satisfy Hypotheses \eqref{e.sumlog_s_k/n=infty} and \eqref{e.inf_s_n/t_n>c>0}. Then
        \begin{equation}\label{e.Prop_dimension}
                \dim_{\mathrm{H}}
                E (\{s_n\},\{t_n\})
                =\liminf _{n\to \infty}
                \frac{\sum_{k=1}^n\log t_k}
                {\gamma\sum_{k=1}^{n+1}\log s_k-\log t_{n+1}}.
        \end{equation}
\end{proposition}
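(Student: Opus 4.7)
The plan is to establish matching upper and lower bounds for $\dim_{\mathrm{H}} E(\{s_n\},\{t_n\})$. The upper bound will come from a refined cylinder cover that exploits the adjacency structure from Hypothesis $(1)$ of Definition \ref{def:MR}, and the lower bound from a uniform Bernoulli-type measure on $E(\{s_n\},\{t_n\})$ together with the mass distribution principle.

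For the upper bound, I would observe that for any admissible $(i_1,\dots,i_n)$ the cylinders $I_{n+1}(i_1,\dots,i_n,j)$ with $j\in(s_{n+1},s_{n+1}+t_{n+1}]$ are adjacent subintervals of $I_n(i_1,\dots,i_n)$, so their union $J_{i_1,\dots,i_n}$ is a single interval. Its diameter is bounded, via Proposition \ref{p.cylinder}, by
\[
|J_{i_1,\dots,i_n}|\ \leq\ \sum_{j=\lfloor s_{n+1}\rfloor+1}^{\lfloor s_{n+1}+t_{n+1}\rfloor}\frac{C^{n+1}}{(i_1\cdots i_n\,j)^{\gamma}}\ \leq\ \frac{2\,C^{n+1}\,t_{n+1}}{(s_1\cdots s_{n+1})^{\gamma}},
\]
since $j>s_{n+1}$ and $i_k>s_k$. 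The $\prod_{k=1}^{n}\lfloor t_k\rfloor$ intervals $J_{i_1,\dots,i_n}$ cover $E(\{s_n\},\{t_n\})$, so the associated $s$-dimensional Hausdorff sum is at most
\[
\prod_{k=1}^{n}t_k\cdot\left(\frac{2\,C^{n+1}\,t_{n+1}}{(s_1\cdots s_{n+1})^{\gamma}}\right)^{s},
\]
which tends to zero along a suitable subsequence whenever $s$ strictly exceeds the right-hand side of \eqref{e.Prop_dimension}; Hypothesis \eqref{e.sumlog_s_k/n=infty} is what absorbs the error terms of order $n\log C$.

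For the lower bound, I would define a Borel probability measure $\mu$ on $E(\{s_n\},\{t_n\})$ by assigning equal mass $\prod_{k=1}^{n}\lfloor t_k\rfloor^{-1}$ to each admissible order-$n$ cylinder $I_n(i_1,\dots,i_n)$ (with $s_k<i_k\leq s_k+t_k$) and then appeal to the mass distribution principle. This reduces the problem to proving, for every $s$ strictly less than the right-hand side of \eqref{e.Prop_dimension}, the uniform estimate $\mu(B(x,r))\leq C_0\,r^{s}$ for $x\in E(\{s_n\},\{t_n\})$ and sufficiently small $r>0$. Given such a pair $(x,r)$, I would choose the integer $n$ so that the order-$(n+1)$ cylinder through $x$ has diameter at most $r$ while the containing interval $J_{a_1(x),\dots,a_n(x)}$ has diameter exceeding $r$, and then estimate the number of admissible order-$(n+1)$ cylinders meeting $B(x,r)$ using the adjacency structure from Hypothesis $(1)$ together with the two-sided cylinder estimate in Proposition \ref{p.cylinder}.

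The main obstacle will be this last counting step. Several regimes must be distinguished according to whether $r$ is comparable to the order-$(n+1)$ cylinder through $x$, to the $J$-union $J_{a_1(x),\dots,a_n(x)}$, or to the full order-$n$ cylinder containing them; in the intermediate regime the bound must pick up exactly the extra factor $t_{n+1}$ that appears in the denominator of \eqref{e.Prop_dimension}, which is what forces the asymmetry between the two sums in that formula. In every regime the count of intersecting cylinders is kept under control by Hypothesis \eqref{e.inf_s_n/t_n>c>0}, which keeps the admissible indices $i_k$ and $s_k$ comparable and so makes the diameter estimates in Proposition \ref{p.cylinder} uniform across the admissible range. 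Matching the count against the $\mu$-mass $\prod_{k=1}^{n+1}\lfloor t_k\rfloor^{-1}$ of a single order-$(n+1)$ cylinder will yield $\mu(B(x,r))\leq C_0\,r^{s}$ and hence the matching lower bound on $\dim_{\mathrm{H}}E(\{s_n\},\{t_n\})$.
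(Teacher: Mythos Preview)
Your upper bound argument is essentially identical to the paper's Part~II: the same union intervals $J_n(i_1,\dots,i_n)=\bigcup_{s_{n+1}<k\leq s_{n+1}+t_{n+1}}\overline{I}_{n+1}(i_1,\dots,i_n,k)$, the same diameter estimate via Proposition~\ref{p.cylinder}, and the same count $\prod_k\lfloor t_k\rfloor$.

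For the lower bound you take a genuinely different route. The paper does \emph{not} invoke the mass distribution principle directly; instead it quotes Falconer's ready-made gap lemma (Lemma~\ref{l.lowerbound}, i.e.\ \cite[Example~4.6]{Fal}), which requires only two ingredients: the branching numbers $m_n\asymp t_n$ and a lower bound on the gaps $\epsilon_n$ between distinct $J_n$'s. The paper obtains the gap bound by observing that two distinct $J_n$'s are always separated by a cylinder of the form $I_{n+1}(i_1,\dots,i_n,1)$, whose length is at least $C^{-n-1}(1+1/\theta)^{-n\gamma}(s_1\cdots s_n)^{-\gamma}$ thanks to Hypothesis~\eqref{e.inf_s_n/t_n>c>0}. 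Plugging $m_n,\epsilon_n$ into \eqref{e.lemma_lower_bound} and using \eqref{e.sumlog_s_k/n=infty} to absorb the $O(n)$ error terms gives the lower bound in one line.

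Your approach---a uniform Bernoulli measure plus ball-counting in several regimes---is correct and is in fact how Falconer's Example~4.6 is itself proved, so you are effectively re-deriving that lemma in situ. What the paper's route buys is brevity: the entire multi-regime analysis you describe (comparing $r$ to the $(n{+}1)$-cylinder, the $J$-union, and the $n$-cylinder, and controlling spill-over via the gap) is already packaged inside Lemma~\ref{l.lowerbound}, so the proof reduces to estimating $m_n$ and $\epsilon_n$. What your route buys is self-containment and perhaps a clearer view of \emph{why} the asymmetric expression $\gamma\sum_{k=1}^{n+1}\log s_k-\log t_{n+1}$ appears: it is exactly $-\log(m_{n+1}\epsilon_{n+1})$ up to linear error, i.e.\ the scale of a single $J$-union at level $n$.
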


Before we proceed the proof, let us first provide two useful lemmas from Falconer \cite{Fal}.

\begin{lemma}[{\cite[Example 4.6]{Fal}}]\label{l.lowerbound}
        Let $E_0\supset E _1\supset\cdots $ be a decreasing sequence of sets and $E=\bigcap_{n\geq 0}E_n$. Assume that each $E_n$ is a union of a finite number of disjoint closed intervals (called basic intervals of order $n$) and each basic interval in $E_{n-1}$ contains $m_n$ intervals of $E_n$ that are separated by gaps of at least $\epsilon_n$. If $m_n\geq 2$ and $\epsilon_{n}>\epsilon_{n+1}>0$ for every $n\geq 1$, then
        \begin{equation}\label{e.lemma_lower_bound}
                \dim_{\mathrm{H}}E\geq
                \liminf_{n\to \infty}
                \frac{\log(m_1m_2\cdots m_{n-1})}
                {-\log(m_n\epsilon_n)}.
        \end{equation}
\end{lemma}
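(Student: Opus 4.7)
The plan is to invoke the mass distribution principle after building a canonical Borel probability measure $\mu$ on $E$ and bounding $\mu(U)$ from above by a power of $|U|$. Since each basic interval of $E_{n-1}$ splits into $m_n$ basic intervals of $E_n$, it is consistent to declare that every basic interval of order $n$ carries mass $1/(m_1 m_2\cdots m_n)$; this assignment extends uniquely to a Borel probability measure $\mu$ with support inside $E$. Fix $s$ with $s < \liminf_{n\to\infty} s_n$, where $s_n\coloneqq \log(m_1\cdots m_{n-1})/(-\log(m_n\epsilon_n))$. The goal becomes to show that $\mu(U)\leq K|U|^{s}$ holds for every interval $U$ of sufficiently small diameter, with some absolute constant $K$; by the mass distribution principle this forces $\mathcal{H}^{s}(E)\geq \mu(E)/K>0$, hence $\dim_{\rm H} E\geq s$, and letting $s\uparrow \liminf_{n} s_n$ finishes.

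The technical core is a simple geometric counting argument. First observe that if $\liminf_n s_n$ is finite and positive then $m_n\epsilon_n<1$ for every large $n$ (otherwise the denominator $-\log(m_n\epsilon_n)$ would fail to be positive), so one may restrict to such $n$. For any interval $U$ with $|U|<\epsilon_{1}$, let $n=n(U)\geq 2$ be the unique index with $\epsilon_n\leq |U|<\epsilon_{n-1}$. Because distinct basic intervals of order $n-1$ are separated by gaps of at least $\epsilon_{n-1}>|U|$, the set $U$ meets at most one basic interval of order $n-1$; inside that parent there are $m_n$ basic intervals of order $n$ mutually separated by gaps $\geq \epsilon_n$, so $U$ meets at most $\min\{m_n,\ |U|/\epsilon_n+1\}\leq \min\{m_n,\ 2|U|/\epsilon_n\}$ of them. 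Hence
\begin{equation*}
\mu(U)\leq \frac{\min\{m_n,\ 2|U|/\epsilon_n\}}{m_1 m_2\cdots m_n}.
\end{equation*}

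I then split into two cases according to which term in the numerator realizes the minimum. In Case A, where $m_n\epsilon_n\leq |U|<\epsilon_{n-1}$, the bound reads $\mu(U)\leq 1/(m_1\cdots m_{n-1})$; since $m_n\epsilon_n<1$ and $s\leq s_n$ for all large $n$, the map $t\mapsto (m_n\epsilon_n)^{t}$ is decreasing, so $|U|^{s}\geq (m_n\epsilon_n)^{s}\geq (m_n\epsilon_n)^{s_n}=1/(m_1\cdots m_{n-1})$, giving $\mu(U)\leq |U|^{s}$. In Case B, where $\epsilon_n\leq |U|<m_n\epsilon_n$, the bound reads $\mu(U)\leq 2|U|/(\epsilon_n m_1\cdots m_n)$; using $|U|^{1-s}<(m_n\epsilon_n)^{1-s}$ (assuming $s<1$) together with the defining identity $(m_n\epsilon_n)^{-s_n}=m_1\cdots m_{n-1}$ and again $s\leq s_n$, one checks $(m_n\epsilon_n)^{-s}\leq m_1\cdots m_{n-1}$, which rearranges to $\mu(U)\leq 2|U|^{s}$. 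Both cases therefore yield $\mu(U)\leq 2|U|^{s}$ uniformly in $U$ once $|U|$ is small enough, so the mass distribution principle applies with $K=2$ and produces $\dim_{\rm H} E\geq s$.

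The main bookkeeping obstacle is Case B, where one must pair the geometric upper bound $|U|<m_n\epsilon_n$ with the defining identity $(m_n\epsilon_n)^{-s_n}=m_1\cdots m_{n-1}$ in order to cancel the denominator $\epsilon_n m_1\cdots m_n$ against the factor $|U|^{1-s}$. The small preliminaries that $m_n\epsilon_n<1$ for all large $n$ and that we may assume $s<1$ are routine but essential; once they are in place, the two regimes fuse into a single Frostman-type estimate and the mass distribution principle delivers \eqref{e.lemma_lower_bound}.
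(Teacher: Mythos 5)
Your proof is correct and is essentially the argument the paper relies on: the lemma is quoted from Falconer's Example 4.6 without proof, and your uniform mass distribution, the gap-counting bound $\mu(U)\le \min\{m_n,\,2|U|/\epsilon_n\}/(m_1\cdots m_n)$ for $\epsilon_n\le |U|<\epsilon_{n-1}$, and the mass distribution principle reproduce Falconer's proof, with your two-case split playing the role of his interpolation $\min\{a,b\}\le a^{s}b^{1-s}$. The small points left implicit (that the hypotheses force $\epsilon_n\to 0$, so $n(U)$ exists and is large for small $|U|$, and that $\liminf_n s_n\le 1$, so assuming $s<1$ loses nothing) are harmless and equally implicit in the cited source.
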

\begin{lemma}[{\cite[Proposition 4.1]{Fal}}]\label{l.upperbound}
        Suppose $F$ can be
covered by $\mathcal{N}_n$ sets of diameter at most $\delta_n$
        with $\delta_n\rightarrow 0$
        as $n\rightarrow \infty$.
        Then
        \begin{equation*}
                \dim_{\mathrm{H}}F
                \leq \liminf_{n\to \infty}
                \frac{\log\mathcal{N}_n}{-\log\delta_n}.
        \end{equation*}
\end{lemma}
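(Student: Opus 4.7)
The plan is to establish matching upper and lower bounds for $\dim_{\rm H} E(\{s_n\},\{t_n\})$. The upper bound will follow from Lemma \ref{l.upperbound} applied to a covering by refined cylinder sets; the lower bound will follow from Lemma \ref{l.lowerbound} applied to a Cantor-like subset of $E(\{s_n\},\{t_n\})$ obtained by thinning the allowed digits. The main technical device throughout is Proposition \ref{p.cylinder}, which turns combinatorial statements about admissible digit patterns into metric statements about diameters.

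For the upper bound, I would cover $E(\{s_n\},\{t_n\})$ at level $n$ by the sets
\[
U_n(i_1,\dots,i_n)\coloneqq I_n(i_1,\dots,i_n)\cap T^{-n}\Bigl(\bigcup_{s_{n+1}<j\leq s_{n+1}+t_{n+1}}\overline{I}_j\Bigr),
\]
indexed by integer tuples with $s_k<i_k\leq s_k+t_k$ for $1\leq k\leq n$. Hypothesis (1) of Definition \ref{def:MR} ensures that the inner union is an interval, so each $U_n(i_1,\dots,i_n)$ is an interval. Combining the mean value theorem with the chain-rule lower bound $|(T^n)'(x)|\geq C^{-n}(i_1\cdots i_n)^{\gamma}$ on $I_n(i_1,\dots,i_n)$ (via Hypothesis (5)) together with $|I_j|\leq Cj^{-\gamma}$ (also from Hypothesis (5)) yields a diameter bound of order $t_{n+1}(s_1\cdots s_{n+1})^{-\gamma}$ times an exponential factor in $n$. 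The number of such sets is at most $\prod_{k=1}^{n}(t_k+1)$. Feeding these into Lemma \ref{l.upperbound} and using Hypothesis \eqref{e.sumlog_s_k/n=infty} to absorb the exponential factors as $o\bigl(\sum\log s_k\bigr)$ produces the claimed upper bound.

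For the lower bound, I would construct a Cantor-like subset $K\subseteq E(\{s_n\},\{t_n\})\cup\mathcal Q$ where at level $n$ the allowed digits lie in a thinned set $A_n\subseteq(s_n,s_n+t_n]\cap\mathbb N$ (for instance every other integer), so that $m_n\coloneqq\#A_n$ is comparable to $t_n$ and any two chosen indices differ by at least two. Hypothesis (1) then guarantees that adjacent level-$n$ basic intervals within a common level-$(n-1)$ parent are separated by an omitted cylinder, whose diameter is bounded below via Proposition \ref{p.cylinder}. Using Hypothesis \eqref{e.inf_s_n/t_n>c>0} to replace $i_k\leq s_k+t_k$ by a constant multiple of $s_k$ converts this into $\epsilon_n\gtrsim c_0^{-n}(s_1\cdots s_n)^{-\gamma}$ for some $c_0>1$. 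Since $\mathcal Q$ is countable, $\dim_{\rm H}E\geq\dim_{\rm H}K$; applying Lemma \ref{l.lowerbound}, using Hypothesis \eqref{e.sumlog_s_k/n=infty} to discard lower-order $O(n)$ corrections, and reindexing $n\mapsto n+1$ recovers the claimed lower bound.

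The main technical obstacle is the strict monotonicity $\epsilon_n>\epsilon_{n+1}$ demanded by Lemma \ref{l.lowerbound}, which the raw gap sizes need not satisfy. I plan to bypass this by replacing the actual gaps with the concrete lower bound $c_0^{-n}(s_1\cdots s_n)^{-\gamma}$, which is decreasing in $n$ up to constants absorbed by \eqref{e.sumlog_s_k/n=infty}. Secondary issues—integer rounding when counting $m_n$, handling levels where $t_n$ is close to $2$, and verifying that the family of $U_n$'s truly covers $E(\{s_n\},\{t_n\})$—are routine once Hypotheses \eqref{e.sumlog_s_k/n=infty} and \eqref{e.inf_s_n/t_n>c>0} are in hand.
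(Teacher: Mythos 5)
Your proposal does not prove the statement it was supposed to prove. The statement is Lemma \ref{l.upperbound} itself, i.e.\ the elementary covering bound $\dim_{\mathrm H}F\leq\liminf_{n\to\infty}\frac{\log\mathcal N_n}{-\log\delta_n}$, which the paper quotes from \cite[Proposition 4.1]{Fal} without reproving it. What you outline instead is a proof of Proposition \ref{p.Hausdorff_dimension_E_s_n_t_n} (the dimension formula for $E(\{s_n\},\{t_n\})$), and in the course of that outline you explicitly invoke Lemma \ref{l.upperbound} as a black box (``feeding these into Lemma \ref{l.upperbound} \dots produces the claimed upper bound''). Using the lemma is not proving it, so as an argument for the stated lemma your text is empty (indeed circular), and none of the machinery you deploy---cylinder diameters, Hypotheses \eqref{e.sumlog_s_k/n=infty} and \eqref{e.inf_s_n/t_n>c>0}, the Cantor-like construction for the lower bound---is relevant to the statement, which concerns an arbitrary set $F$ and arbitrary covers.

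The missing proof is short and purely measure-theoretic: let $s$ be any number strictly larger than $\liminf_{n}\frac{\log\mathcal N_n}{-\log\delta_n}$. Choose a subsequence $(n_k)$ with $\delta_{n_k}<1$ and $\frac{\log\mathcal N_{n_k}}{-\log\delta_{n_k}}<s$, equivalently $\mathcal N_{n_k}\delta_{n_k}^{\,s}\leq 1$. The given cover of $F$ by $\mathcal N_{n_k}$ sets of diameter at most $\delta_{n_k}$ is a $\delta_{n_k}$-cover, so $\mathcal H^{s}_{\delta_{n_k}}(F)\leq\mathcal N_{n_k}\delta_{n_k}^{\,s}\leq 1$; letting $k\to\infty$ and using $\delta_{n_k}\to 0$ gives $\mathcal H^{s}(F)\leq 1<\infty$, hence $\dim_{\mathrm H}F\leq s$, and letting $s$ decrease to the liminf yields the claim. (As a side remark, your sketch of Proposition \ref{p.Hausdorff_dimension_E_s_n_t_n} does parallel the paper's Part I/Part II argument, but that proposition is not the statement under review.)
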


We are now ready to prove Proposition \ref{p.Hausdorff_dimension_E_s_n_t_n}.
\begin{proof}[Proof of Proposition \ref{p.Hausdorff_dimension_E_s_n_t_n}]
We will apply Lemmas \ref{l.lowerbound} and \ref{l.upperbound} to prove Proposition \ref{p.Hausdorff_dimension_E_s_n_t_n}. To do this, we first characterize the set $E(\{s_n\},\{t_n\})$.
For any $n \in \mathbb N$, let
\begin{equation}\label{e.def_C_n}
                \mathcal{C} _{n}\coloneqq \{
                (i_1, \dots,i_{n})\in
                \mathbb{N}^n:s_j<i_{j}\leq s_j+t_j,
               1\leq j\leq n
                \}.
        \end{equation}
For any $(i_1, \dots, i_{n})\in \mathcal{C}_{n}$, we define the \textit{basic interval} of order $n$ as follows:
        \begin{equation}\label{e.def_J_n}
                J_n(i_1, \dots, i_{n})
                \coloneqq \bigcup_
                {s_{n+1}<k\leq s_{n+1}+t_{n+1}} \overline{I}_{n+1}(i_1, \dots, i_{n}, k).
        \end{equation}
Note that $J_n(i_1, \dots, i_{n})$ is a closed interval. Define $E_0\coloneqq [0,1]$, and for any $n \in \mathbb N$, let
        \begin{equation}\label{e.def_F_n}
                E_n\coloneqq
                \bigcup_{(i_1, \dots, i_{n})\in
                        \mathcal{C} _{n}}
                J_n(i_1, \dots, i_{n}).
        \end{equation}
Then
\begin{equation*}\label{equ:esntn}
E(\{s_n\},\{t_n\}) \subseteq \cap_{n\geq 0}E_n \subseteq E(\{s_n\},\{t_n\}) \cup \mathcal{Q}.
\end{equation*}
Note that $\mathcal{Q}$ is a countable set. These lead us to the conclusion that $E(\{s_n\},\{t_n\})$ and $\cap_{n\geq 0}E_n$ have the same Hausdorff dimension, namely
\begin{equation}\label{equ:dengshi}
\dim_{\mathrm{H}} E(\{s_n\},\{t_n\}) = \dim_{\mathrm{H}}(\cap_{n\geq 0}E_n).
\end{equation}

We will proceed the proof in the following two parts.

\textbf{Part I:} In this part, we use Lemma \ref{l.lowerbound} to establish a lower bound for $\dim_{\mathrm{H}} E(\{s_n\},\{t_n\})$. Based on \eqref{equ:dengshi}, we do the lower bound by identifying $E_n$ in Lemma \ref{l.lowerbound} with the set in Equation \eqref{e.def_F_n}.
It is important to note that $J_n(i_1, \dots, i_{n})$ serves as a refinement of $I_n(i_1, \dots, i_{n})$, such that different basic intervals maintain a non-trivial gap between them. The estimation of $m_n$ follows directly from \eqref{e.def_J_n}. Applying Proposition \ref{p.cylinder}, we are able to estimate $\epsilon_n$. The combination of these estimations result in a lower bound for $\dim_{\mathrm{H}} E(\{s_n\},\{t_n\})$, as indicated in \eqref{e.lemma_lower_bound}.

First, we establish an estimate for $m_n$. By the structure of basic intervals in \eqref{e.def_J_n}, we deduce that each basic interval of order $n-1$ contains
        \begin{equation}\label{e.estimation_m_n}
                \frac{t_n}{2}<\lfloor t_n\rfloor \leq
                m_n=\lfloor s_n+t_n\rfloor
                -\lfloor s_n\rfloor <t_n+1\leq 2t_n
        \end{equation}
        basic intervals of order $n$.

Next let us estimate $\epsilon_n$. Assume that $J_n(i_1, \dots, i_{n})$ and $J_n(i^*_1, \dots, i^*_{n})$ are two different basic intervals in $E_n$. Then they are separated by
        \begin{equation*}
                I_{n+1}(i_1, \dots, i_{n}, 1)\quad
                \mathrm{or} \quad
                I_{n+1}(i^*_1, \dots, i^*_{n}, 1).
        \end{equation*}
This means that the gap between these two basic intervals is at least
\begin{equation*}
                \epsilon_n\coloneqq \min_{(i_1, \dots, i_{n})\in
                        \mathcal{C} _{n}} \left\{|I_{n+1}(i_1, \dots, i_{n}, 1)|\right\}.
        \end{equation*}
In view of \eqref{e.inf_s_n/t_n>c>0}, there exists a constant $\theta >0$ such that $s_j > \theta t_j$ for all $j \in \mathbb N$. Hence, for any $(i_1, \dots, i_{n})\in\mathcal{C} _{n}$, we have $i_{j}\leq s_j+t_j < (1+1/\theta)s_j$ for all $1\leq j\leq n$. It follows from Proposition \ref{p.cylinder} that
 \begin{alignat*}{2}
                |I_{n+1}(i_1, \dots, i_{n}, 1)|
                \geq  \frac{C^{-n-1}}{((1+1/\theta)s_1 (1+1/\theta)s_2\cdots(1+1/\theta)s_n)^{\gamma}}\notag
                = \frac{C^{-n-1}}{(1+1/\theta)^{n\gamma}(s_1s_2\cdots s_n)^{\gamma}},
        \end{alignat*}
which implies that
 \begin{alignat}{2} \label{a.estimate_epsilon_n}
                \epsilon_n \geq \frac{C^{-n-1}}{(1+1/\theta)^{n\gamma}(s_1s_2\cdots s_n)^{\gamma}}.
        \end{alignat}
Combining Lemma \ref{l.lowerbound} with \eqref{equ:dengshi}, \eqref{e.estimation_m_n} and \eqref{a.estimate_epsilon_n}, we conclude that
  \begin{alignat*}{2}
                \dim_{\mathrm{H}}E(\{s_n\},\{t_n\})\notag
                \geq&  \liminf_{n\to \infty}
                \frac{\log (m_1 m_2 \cdots m_n)}
                {-\log (m_{n+1}\epsilon_{n+1})}\\
                \geq&  \liminf_{n\to \infty}
                \frac{\log (t_1t_2\cdots t_n)-n\log 2}
                {\gamma \log(s_1s_2\cdots s_ns_{n+1})-\log t_{n+1}
                +\ell_1(n)} \\
                =& \liminf_{n\to \infty}
                \frac{\log (t_1t_2\cdots t_n)}
                {\gamma\log(s_1s_2\cdots s_ns_{n+1})-\log t_{n+1}},
                && \  \big(\text{by}\ \eqref{e.sumlog_s_k/n=infty}\big)
        \end{alignat*}
        where $\ell_1(n)\coloneqq \left(\gamma \log(1+1/\theta)+\log C\right)n+\log\left(2C^2(1+1/\theta)^{\gamma}\right)$ is a linear function of $n$.
Then we obtain the desired lower bound for $\dim_{\mathrm{H}}E(\{s_n\},\{t_n\})$ as \eqref{e.Prop_dimension}.

\textbf{Part II:} In this part, we use Lemma \ref{l.upperbound} to give an upper bound for $\dim_{\mathrm{H}} E(\{s_n\},\{t_n\})$ by covering $E(\{s_n\},\{t_n\})$ with $E_n$ for each $n\geq 1$ with $\mathcal{N} _n \coloneqq \#\mathcal{C} _n$, the number of basic intervals of order $n$.
The estimation of $\mathcal{N} _n$ follows directly from the definition of $\mathcal{C} _n$ in \eqref{e.def_C_n} by
        \begin{equation*}\label{e.estimate_N_n}
                \mathcal{N} _n<2t_1\cdot2t_2\cdots 2t_n=
                2^nt_1t_2\cdots t_n.
        \end{equation*}
We estimate the diameter of the basic interval of order $n$. For any $(i_1, \dots, i_{n})\in\mathcal{C} _{n}$, we have $i_{j} >s_j$ for all $1\leq j\leq n$. Note that
 \begin{equation*}
 \sum_{s_{n+1}<k\leq s_{n+1}+t_{n+1}}\frac{1}{k^{\gamma}} <\frac{t_{n+1}+1}{s_{n+1}^{\gamma}} \leq  \frac{2t_{n+1}}{s_{n+1}^{\gamma}}.
        \end{equation*}
By \eqref{e.def_J_n} and Proposition \ref{p.cylinder}, we deduce that
    \begin{alignat*}{2}
                |J_n(i_1, \dots, i_{n})|
                \leq \frac{C^{n+1}}{(i_1\cdots i_{n})^{\gamma}}\sum_{s_{n+1}<k\leq s_{n+1}+t_{n+1}}
                \frac{1}{k^{\gamma}}
               < \frac{2C^{n+1}t_{n+1}}{(s_1\cdots
                s_ns_{n+1})^{\gamma}}.
        \end{alignat*}
For any $n \in \mathbb N$, let
  \begin{equation*}
 \delta_n \coloneqq  \frac{2C^{n+1}t_{n+1}}{(s_1\cdots
                s_ns_{n+1})^{\gamma}}.
        \end{equation*}
Hence, $E(\{s_n\},\{t_n\})$ is covered by $\mathcal{N}_n$ basic intervals of diameter at most $\delta_n$. According to \eqref{equ:dengshi} and Lemma \ref{l.upperbound}, we derive that
 \begin{alignat*}{2}
                \dim_{\mathrm{H}}E(\{s_n\},\{t_n\})
               & \leq \liminf_{n\to \infty}\frac{\log \mathcal{N} _n}
                {-\log \delta_n}\\
               & \leq  \liminf_{n\to \infty}
                \frac{n\log 2+\log (t_1t_2\cdots t_n)}
                {\gamma \log(s_1s_2\cdots s_ns_{n+1})-\log t_{n+1}
                +\ell_2(n)} \\
               & \leq  \liminf_{n\to \infty}
                \frac{\log (t_1t_2\cdots t_n)}
                {\gamma\log(s_1s_2\cdots s_ns_{n+1})-\log t_{n+1}},
                && \quad  \big( \text{by}\
                \eqref{e.sumlog_s_k/n=infty}\big) \notag
        \end{alignat*}
where $\ell_2(n)\coloneqq -(n+1)\log C-\log 2$ is a linear function of $n$. Then we obtain the required upper bound for $\dim_{\mathrm{H}}E(\{s_n\},\{t_n\})$ as \eqref{e.Prop_dimension}. Therefore, the proof of Proposition \ref{p.Hausdorff_dimension_E_s_n_t_n} is completed.
\end{proof}

Recall that
            \begin{equation}\label{e.def_D}
        E(\{e^n\}, \{e^n\})\coloneqq
         \left\{x\in  \Lambda:e^n<a_n(x)\leq 2e^{n},
        \forall n\geq 1\right\}.
        \end{equation}
        As a consequence of Proposition \ref{p.Hausdorff_dimension_E_s_n_t_n}, we are able to determine its Hausdorff dimension.

        \begin{cor}\label{c.D}
        \begin{equation*}
        \dim_{\mathrm{H}}E(\{e^n\},\{e^n\}) =\frac{1}{\gamma}.
        \end{equation*}
        \end{cor}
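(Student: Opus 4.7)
The plan is to apply Proposition \ref{p.Hausdorff_dimension_E_s_n_t_n} directly with the choice $s_n = t_n = e^n$. First I would verify that this choice meets the two standing hypotheses of that proposition. The requirement $s_n, t_n \geq 2$ holds since $e^n \geq e > 2$ for all $n \geq 1$. For Hypothesis \eqref{e.sumlog_s_k/n=infty}, we compute
\[
\frac{\sum_{k=1}^n \log s_k}{n} = \frac{\sum_{k=1}^n k}{n} = \frac{n+1}{2} \longrightarrow \infty,
\]
and for Hypothesis \eqref{e.inf_s_n/t_n>c>0}, we trivially have $s_n/t_n = 1$ for all $n$, so the liminf is $1 > 0$.

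Next, I would substitute into the formula \eqref{e.Prop_dimension} provided by Proposition \ref{p.Hausdorff_dimension_E_s_n_t_n}. With $s_n = t_n = e^n$, one has $\log s_k = \log t_k = k$, so
\[
\sum_{k=1}^n \log t_k = \frac{n(n+1)}{2}, \qquad
\sum_{k=1}^{n+1} \log s_k = \frac{(n+1)(n+2)}{2}, \qquad \log t_{n+1} = n+1.
\]
Plugging these in gives
\[
\dim_{\mathrm{H}} E(\{e^n\},\{e^n\})
= \liminf_{n \to \infty} \frac{n(n+1)/2}{\gamma(n+1)(n+2)/2 - (n+1)}
= \liminf_{n \to \infty} \frac{n}{\gamma(n+2) - 2},
\]
and the limit of this rational expression in $n$ is clearly $\tfrac{1}{\gamma}$, which yields the claim.

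Since this is a direct corollary, I do not anticipate any real obstacle: all the work has been done in Proposition \ref{p.Hausdorff_dimension_E_s_n_t_n}, and the only task is the elementary verification of the two hypotheses followed by the arithmetic simplification of the dimension formula. The one minor point to keep in mind is that the definition of $E(\{s_n\},\{t_n\})$ in \eqref{e.def_E_s_n_t_n} requires $s_n < a_n(x) \leq s_n + t_n$, which with our choice gives $e^n < a_n(x) \leq 2e^n$, matching the definition of $E(\{e^n\},\{e^n\})$ in \eqref{e.def_D}.
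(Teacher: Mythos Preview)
Your proposal is correct and is exactly the approach the paper intends: the corollary is stated immediately after Proposition~\ref{p.Hausdorff_dimension_E_s_n_t_n} as a direct consequence, and your verification of the two hypotheses together with the arithmetic evaluation of the dimension formula is precisely what is required.
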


We end this section by providing the Hausdorff dimension of the following set
        \begin{equation}\label{e.def_pi_infty}
                \Lambda_{\infty}\coloneqq
                \left\{x\in \Lambda:
                \limsup_{n\to\infty}\frac{\sum^n_{k=1}\log a_k(x)}{n}
                =\infty\right\}.
        \end{equation}

        \begin{proposition}\label{l.Hausdorff_dimension_limsup_log_pi/n}
        \begin{equation*}
                \dim_{\mathrm{H}}\Lambda_{\infty}
                =\frac{1}{\gamma}.
        \end{equation*}
\end{proposition}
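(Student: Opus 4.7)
The plan is to establish the two inequalities $\dim_{\mathrm{H}}\Lambda_\infty \ge 1/\gamma$ and $\dim_{\mathrm{H}}\Lambda_\infty \le 1/\gamma$ separately. The lower bound is almost immediate from Corollary \ref{c.D}: any $x$ in the set $E(\{e^n\},\{e^n\})$ of \eqref{e.def_D} satisfies $a_k(x) > e^k$ for all $k \ge 1$, so $\sum_{k=1}^{n}\log a_k(x) > n(n+1)/2$, and hence $\limsup_{n\to\infty} \frac{1}{n}\sum_{k=1}^n \log a_k(x) = \infty$. This yields the inclusion $E(\{e^n\},\{e^n\}) \subseteq \Lambda_\infty$, so the monotonicity of Hausdorff dimension together with Corollary \ref{c.D} gives $\dim_{\mathrm{H}}\Lambda_\infty \ge 1/\gamma$.

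For the upper bound, the plan is to show that $\dim_{\mathrm{H}}\Lambda_\infty \le 1/\gamma + \epsilon$ for every $\epsilon > 0$ by constructing efficient covers. For any $M > 0$, observe that $\Lambda_\infty \subseteq C_M := \bigcap_{N \ge 1}\bigcup_{n \ge N} U_n(M)$, where $U_n(M) := \{x \in \Lambda : a_1(x)\cdots a_n(x) > e^{Mn}\}$ is a union of the order-$n$ cylinders $I_n(i_1,\dots,i_n)$ with $i_1\cdots i_n > e^{Mn}$. By Proposition \ref{p.cylinder}, such cylinders have diameters bounded by $C^n/(i_1\cdots i_n)^\gamma \le (C\,e^{-\gamma M})^n$, which tends to $0$ once $M > (\log C)/\gamma$. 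Fix $s = 1/\gamma + \epsilon$, and pick $\tau = \gamma\epsilon/2$ so that $\gamma s - \tau = 1 + \gamma\epsilon/2 > 1$. The key estimate uses the exponential Markov trick $\mathbf{1}_{\{i_1\cdots i_n > e^{Mn}\}} \le \bigl(i_1\cdots i_n / e^{Mn}\bigr)^\tau$ to bound the $s$-cover sum over $U_n(M)$:
\begin{equation*}
\sum_{\substack{(i_1,\dots,i_n) \\ i_1\cdots i_n > e^{Mn}}} |I_n(i_1,\dots,i_n)|^s \le C^{ns}\, e^{-Mn\tau}\!\!\sum_{(i_1,\dots,i_n)\in\mathbb{N}^n}\!\! (i_1\cdots i_n)^{\tau-\gamma s} = \bigl(C^s e^{-M\tau}\zeta(\gamma s-\tau)\bigr)^n,
\end{equation*}
where $\zeta(\gamma s-\tau) < \infty$ since $\gamma s-\tau > 1$. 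By choosing $M$ large enough that $C^s e^{-M\tau}\zeta(\gamma s-\tau) < 1$, the geometric series $\sum_{n \ge N}(\cdots)^n$ tends to $0$ as $N \to \infty$, so $\mathcal{H}^s(C_M) = 0$, giving $\dim_{\mathrm{H}}\Lambda_\infty \le s = 1/\gamma + \epsilon$. Letting $\epsilon \to 0$ completes the upper bound.

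The main obstacle is the covering estimate for the upper bound, and specifically the interplay between the free parameters $s$, $\tau$, and $M$. One has to choose the auxiliary exponent $\tau$ small enough that $\zeta(\gamma s - \tau)$ stays finite (which forces $\tau < \gamma\epsilon$), yet large enough to extract geometric decay $e^{-Mn\tau}$ that overwhelms both $C^{ns}$ and $\zeta(\gamma s-\tau)^n$; this is only possible because $M$ can be taken arbitrarily large, so the implicit dependence $M = M(\epsilon)$ must be tracked carefully. Aside from this, everything reduces to routine application of Proposition \ref{p.cylinder} and the inclusion $\Lambda_\infty \subseteq C_M$, so the proof is essentially a one-parameter optimization on the symbolic side.
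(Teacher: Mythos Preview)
Your proposal is correct and follows essentially the same approach as the paper's proof. Both arguments obtain the lower bound from the inclusion $E(\{e^n\},\{e^n\})\subset\Lambda_\infty$ together with Corollary~\ref{c.D}, and obtain the upper bound by covering $\Lambda_\infty$ with cylinders $I_n(i_1,\dots,i_n)$ satisfying $i_1\cdots i_n\geq K^n$ and then extracting geometric decay from this constraint; your ``exponential Markov'' indicator bound $\mathbf{1}_{\{i_1\cdots i_n>e^{Mn}\}}\leq(i_1\cdots i_n/e^{Mn})^\tau$ is exactly the paper's splitting $(i_1\cdots i_n)^{-\gamma s}=(i_1\cdots i_n)^{-\epsilon}\cdot(i_1\cdots i_n)^{-(1+\epsilon)}$ followed by $(i_1\cdots i_n)^{-\epsilon}\leq K^{-n\epsilon}$, just written in different notation.
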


\begin{proof}
For the lower bound of $\dim_{\mathrm{H}}\Lambda_{\infty}$, since $E(\{e^n\},\{e^n\})\subset \Lambda_{\infty}$, it follows from Corollary \ref{c.D} that
        \begin{equation*}
        \dim_{\mathrm{H}}\Lambda_{\infty}\geq
        \dim_{\mathrm{H}}
        E(\{e^n\},\{e^n\})
        =\frac{1}{\gamma}.
        \end{equation*}

 For the upper bound of $\dim_{\mathrm{H}}\Lambda_{\infty}$, let $\epsilon >0$ be an arbitrary real number and $s\coloneqq (1+2\epsilon)/\gamma$.
Choosing a sufficiently large number $K>1$ such that
        \begin{equation}\label{e.def_K_K>4}
                K^{\epsilon}>2C^sJ_{\epsilon}
                \quad \mathrm{and}\quad
                K^{\gamma}>2C,
        \end{equation}
         where
        \begin{equation}\label{e.def_J_epsilon}
             J_{\epsilon} \coloneqq \sum_{n=1}^{\infty}
        \frac{1}{n^{1+\epsilon}}<\infty.
        \end{equation}
For any $n \in \mathbb N$, define
 \begin{equation}\label{e.def_C_n_K}
             \mathcal{C}_n(K)
        \coloneqq\{
                (i_1,\dots,i_{n})\in
                \mathbb{N}^n:
                i_1\cdots i_{n}\geq K^n
        \}.
        \end{equation}
Then, for any $N\in\mathbb{N}$, the set $\Lambda_{\infty}$ is covered by
 \begin{equation}\label{e.cover_Pi_infty}
                % \bigcap_{N=1}^{\infty}
                \bigcup_{n=N}^{\infty}
                \bigcup_{(i_1,\dots,i_{n})\in \mathcal{C}_n(K)}
                I_n(i_1,\dots,i_{n}).
        \end{equation}
For any $\delta >0$, by Proposition \ref{p.cylinder},  \eqref{e.def_K_K>4} and \eqref{e.def_C_n_K}, there exists $M \coloneqq \lfloor \log _{2} 1/\delta\rfloor +1$,
        such that for all $n>M$ and $(i_1,\dots,i_{n})\in \mathcal{C}_n(K)$,
        \begin{equation*}\label{e.delta_cover_condition}
             |I_n(i_1,\dots, i_{n})|
            \leq \frac{C^{n}}
            {(i_1\cdots i_{n})^{\gamma}}
            \leq \frac{C^{n}}{K^{\gamma n}}
            \leq \left(\frac{1}{2}\right)^n
            \leq \left(\frac{1}{2}\right)^M
            <\delta.
        \end{equation*}
 Consequently, when $N>M$, \eqref{e.cover_Pi_infty} is a $\delta$-cover of $\Lambda_{\infty}$. We are ready to calculate the Hausdorff measure of $\Lambda_{\infty}$. For any  $(i_1,\dots,i_{n})\in \mathcal{C}_n(K)$, we deduce from Proposition \ref{p.cylinder} that
 \[
  |I_n(i_1\cdots i_{n})|^s \leq \frac{C^{sn}}{(i_1\cdots i_{n})^{1+2\epsilon}} \leq \left(\frac{C^{s}}{K^\epsilon}\right)^n \frac{1}{(i_1\cdots i_{n})^{1+\epsilon}}.
 \]
Therefore, we conclude that
      \begin{alignat*}{2}
                \mathcal{H}_{\delta}^{s}(\Lambda_{\infty})
                \leq &\liminf_{N\to \infty}
                \sum_{n=N}^{\infty}\sum_{(i_1,\dots, i_{n})\in \mathcal{C}_{n}(K)}
                |I_n(i_1\cdots i_{n})|^s\\
                \leq& \liminf_{N\to \infty}
                \sum_{n=N}^{\infty}\left(\frac{C^{s}}{K^\varepsilon}\right)^n
                \sum_{(i_1,\dots, i_{n})\in \mathcal{C}_{n}(K)}
                \frac{1}{(i_1\cdots i_{n})^{1+\epsilon}}\\
                \leq& \liminf_{N\to \infty}\sum_{n=N}^{\infty}
                \left(\frac{C^sJ_{\epsilon}}{K^{\epsilon}}\right)^{n}\\
                \leq& \liminf_{N\to \infty}\sum_{n=N}^{\infty}
                \frac{1}{2^n}=0.
        \end{alignat*}
Letting $\delta \to 0^{+}$, we have $\mathcal{H}^{s}(\Lambda_{\infty})=0$, and so $\dim_{\text{H}}\Lambda_{\infty}\leq s$. Since $\varepsilon$ is arbitrary, we obtain $$\dim_{\text{H}} \Lambda_{\infty} \leq \frac{1}{\gamma}.$$
\end{proof}

Note that $E(\{e^n\},\{e^n\}) \subset L(\infty) \subset \Lambda_\infty$. We deduce from Corollary \ref{c.D} and Propositions \ref{l.Transformation} and \ref{l.Hausdorff_dimension_limsup_log_pi/n} that
\[
\dim_{\rm H}L(\infty) = \frac{1}{\gamma}.
\]
This shows that the Lyapunov spectrum $L(\alpha)$ is continuous at infinity. Hence, the proof of Proposition \ref{LSinfty} is completed.

\section{Proof of Theorem \ref{t.Fast_Lyapunov}}\label{s.proofoftheorema}

In this section, we provide the proof of Theorem \ref{t.Fast_Lyapunov}.
Assume that $\psi:\mathbb{N}\to\mathbb{R}_{>0}$ is a function such that $\psi(n)/n \to \infty$ as $n\to \infty$.

\subsection{Case $\alpha =0$}

For any $x\in \Lambda$ and $n \in \mathbb N$, let
\begin{equation}\label{Pin(x)}
 \Pi_{n}(x) \coloneqq\prod^n_{i=1}a_i(x)
\end{equation}
be the product of the first $n$ terms in the coding sequence of $x$. The behavior of $\Pi_{n}(x)$ is as follows.

\begin{lemma}\label{lem:Acase0}
For Lebesgue almost every $x\in I^*$,
\begin{equation}\label{ineq:sumak}
\limsup_{n \to \infty} \frac{1}{n}\log  \Pi_{n}(x)<\infty.
\end{equation}
\end{lemma}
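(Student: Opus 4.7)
The key observation is that the statement of the lemma can be reformulated directly in terms of the set $\Lambda_{\infty}$ defined in \eqref{e.def_pi_infty}, whose Hausdorff dimension has already been computed in Proposition \ref{l.Hausdorff_dimension_limsup_log_pi/n}. My plan is to exploit this fact rather than carry out an independent Borel–Cantelli argument on cylinder set measures.

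First I would note that $\log \Pi_n(x) = \sum_{k=1}^n \log a_k(x)$, so the inequality \eqref{ineq:sumak} fails at $x \in \Lambda$ precisely when
\[
\limsup_{n \to \infty} \frac{1}{n}\sum_{k=1}^n \log a_k(x) = +\infty,
\]
which is exactly the defining condition for membership in $\Lambda_{\infty}$. (The product $\Pi_n(x)$ is only meaningful when the symbolic coding is well defined, i.e., on $\Lambda = I^{\ast}\setminus \mathcal{Q}$, but since $\mathcal{Q}$ is a countable union of preimages of boundary points it has Lebesgue measure zero and can be safely discarded.)

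Next I would invoke Proposition \ref{l.Hausdorff_dimension_limsup_log_pi/n}, which gives $\dim_{\mathrm H}\Lambda_{\infty} = 1/\gamma$. Since the Markov–R\'enyi hypothesis requires $\gamma > 1$, we have $1/\gamma < 1$, and every subset of $\mathbb{R}$ with Hausdorff dimension strictly less than $1$ has one-dimensional Lebesgue measure zero. Therefore the set of $x \in I^{\ast}$ for which \eqref{ineq:sumak} can fail, namely $\mathcal{Q}\cup \Lambda_{\infty}$, is Lebesgue-null. Conversely, for every $x \in \Lambda \setminus \Lambda_{\infty}$, the defining condition of $\Lambda_\infty$ fails, which yields $\limsup_{n\to\infty} \frac{1}{n}\log\Pi_n(x) < \infty$, establishing the lemma.

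There is no serious obstacle: the whole point of this lemma is to convert the dimension computation that has just been done into a measure-theoretic statement, and the translation is essentially immediate once one recognizes that \eqref{ineq:sumak} is the literal negation of the condition defining $\Lambda_{\infty}$. The only subtlety worth flagging explicitly is the harmless role of the countable exceptional set $\mathcal{Q}$, which is why the statement is written as "Lebesgue almost every $x \in I^{\ast}$" rather than "every $x \in \Lambda$."
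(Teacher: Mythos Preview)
Your argument is correct, and there is no circularity: Proposition~\ref{l.Hausdorff_dimension_limsup_log_pi/n} is established in Section~\ref{s.distributiondigits}, entirely independently of Lemma~\ref{lem:Acase0}, so invoking it here is legitimate. The identification of the failure set of \eqref{ineq:sumak} with $\Lambda_\infty$ (up to the countable set $\mathcal{Q}$) is exactly right, and $\dim_{\mathrm H}\Lambda_\infty=1/\gamma<1$ immediately forces $\mathcal{L}(\Lambda_\infty)=0$.

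However, this is \emph{not} the route the paper takes. The paper gives a self-contained Borel--Cantelli argument: it estimates directly the Lebesgue measure of the sets $\{x:2^{kn}<\Pi_n(x)\le 2^{(k+1)n}\}$ via a combinatorial count of the admissible cylinders (bounding $\#\mathcal{B}_n(k)$ through a stars-and-bars argument on the dyadic sizes of the digits) combined with the cylinder-length estimate of Proposition~\ref{p.cylinder}, and then sums over $k$ to show $\sum_n\mathcal{L}\{\Pi_n(x)>2^{k_0 n}\}<\infty$ for some fixed $k_0$. Your approach is considerably shorter and more conceptual, reusing the dimension computation already on record; the paper's approach is more elementary in that it avoids any Hausdorff-dimension machinery and yields as a byproduct the slightly sharper quantitative statement that $\Pi_n(x)\le 2^{k_0 n}$ eventually for a.e.\ $x$, with an explicit $k_0$ depending only on $C$ and $\gamma$.
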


\begin{proof}
For any $k,n \in \mathbb N$, we claim that
\begin{equation}\label{ineq:Pn}
\mathcal{L}\left\{x\in \Lambda: 2^{kn} < \Pi_{n}(x) \leq 2^{(k+1)n}\right\} < \left(\frac{2e C(k+2)}{2^{k(\gamma-1)}}\right)^n,
\end{equation}
where $\mathcal{L}$ denotes the Lebesgue measure on $[0,1]$. To see this, let
$$
\mathcal{B}_n(k):= \left\{(\sigma_1,\dots,\sigma_n)\in \mathbb N^n: 2^{kn} <\sigma_1\cdots\sigma_n \leq 2^{(k+1)n}\right\}.
$$
Then the set in \eqref{ineq:Pn} can be written as
$$
\bigcup_{(\sigma_1,\dots,\sigma_n) \in \mathcal{B}_n(k)} I_n(\sigma_1,\dots,\sigma_n).
$$
For any $(\sigma_1,\dots,\sigma_n) \in \mathcal{B}_n(k)$, we deduce from Proposition \ref{p.cylinder} that
\begin{equation*}
\mathcal{L}(I_n(\sigma_1,\dots,\sigma_n)) = |I_n(\sigma_1,\dots,\sigma_n)| \leq \frac{C^n}{(\sigma_1\cdots\sigma_n)^\gamma} < \frac{C^n}{2^{k\gamma n}}.
\end{equation*}

Next, we estimate the cardinality of $\mathcal{B}_n(k)$. For any $(\sigma_1,\dots,\sigma_n) \in \mathcal{B}_n(k)$, let $\tau_i := \lfloor\log_2 \sigma_i\rfloor$ for all $1\leq i \leq n$. Then $\tau_i \in \mathbb N_{\geq 0}$ and $2^{\tau_i} \leq \sigma_i <2^{\tau_i+1}$. This implies that each value of $\tau_i$ corresponds to $2^{\tau_i}$ many possible values of $\sigma_i$. Since $2^{kn} <\sigma_1\cdots\sigma_n \leq 2^{(k+1)n}$, it follows that $(k-1)n < \sum^n_{i=1}\tau_i \leq (k+1)n$. Thus,
\begin{align}\label{Bsolution}
\#\mathcal{B}_n(k) \leq 2^{(k+1)n} \cdot \# \left\{(\tau_1,\dots,\tau_n)\in \mathbb N^n_{\geq 0}: \sum^n_{i=1}\tau_i \leq (k+1)n\right\}.
\end{align}
The second term on the right-hand side of \eqref{Bsolution} counts the number of nonnegative integer solutions to the inequality $\sum^n_{i=1}\tau_i \leq (k+1)n$. Equivalently, it represents the number of ways to distribute at most $(k+1)n$ indistinguishable units among $n$ nonnegative integer variables. This quantity is given by the well-known combinatorial identity:
$$
\sum^{(k+1)n}_{j=0} \binom{j+n-1}{n-1} = \binom{(k+2)n}{n}.
$$
Since $n! > n^n e^{-n}$ for all $n \in \mathbb N$, we deduce that
$$
\binom{(k+2)n}{n} < \frac{\left((k+2)n\right)^n}{n!}< \left(e(k+2)\right)^n.
$$
Note that $2^{\sum^n_{i=1}\tau_i} \leq 2^{(k+1)n}$. Combining these with \eqref{Bsolution}, we see that
\[
\#\mathcal{B}_n(k) \leq \left(e(k+2)\cdot2^{(k+1)}\right)^n.
\]
Therefore,
\begin{align*}
\mathcal{L}\left\{x\in \Lambda: 2^{kn}< \Pi_{n}(x) \leq 2^{(k+1)n}\right\} &\leq \# \mathcal{B}_n(k)  \cdot \mathcal{L}(I_n(\sigma_1,\dots,\sigma_n))\\
&< \left(e(k+2)\cdot2^{(k+1)}\right)^n \cdot \frac{C^n}{2^{k\gamma n}}= \left(\frac{2e C(k+2)}{2^{k(\gamma-1)}}\right)^n.
\end{align*}

Finally, we turn to the proof of \eqref{ineq:sumak}. To this end, we choose a sufficiently large integer $k_0\geq 2$ such that for all $k \geq k_0$,
$$
\frac{2e C(k+2)}{2^{k(\gamma-1)}} \leq \frac{1}{2^{k(\gamma-1)/2}}.
$$
For any $n \in \mathbb N$, it follows from \eqref{ineq:Pn} that
\begin{align*}
\mathcal{L}\left\{x\in \Lambda:  \Pi_{n}(x) >2^{k_0n}\right\} &= \sum^\infty_{k=k_0} \mathcal{L}\left\{x\in \Lambda: 2^{kn}< \Pi_{n}(x) \leq 2^{(k+1)n}\right\}\\
&\leq \sum^\infty_{k=k_0} \left(\frac{1}{2^{n(\gamma-1)/2}}\right)^k \leq \frac{2^{(\gamma-1)/2}}{2^{(\gamma-1)/2}-1}\cdot \frac{1}{2^{(\gamma-1)n}}
\end{align*}
which yields that
$$
\sum^\infty_{n=1} \mathcal{L}\left\{x\in \Lambda:  \Pi_{n}(x) >2^{k_0n}\right\} <\infty.
$$
From the Borel-Cantelli lemma, we conclude that for Lebesgue almost every $x\in I^*$, $\Pi_{n}(x) \leq 2^{k_0n}$ for sufficiently large $n \in \mathbb N$. Thus, \eqref{ineq:sumak} holds as required.
\end{proof}

The proof of Theorem \ref{t.Fast_Lyapunov} for the case $\alpha =0$ follows directly from Proposition \ref{l.Transformation} and Lemma \ref{lem:Acase0}.

\subsection{Case $0<\alpha< \infty$}
For any $0<\alpha< \infty$, define
\begin{equation}\label{e.def_E_psi_alpha}
        E_{\psi}(\alpha)\coloneqq
        \left\{x\in \Lambda:\lim_{n \to \infty}
        \frac{\sum_{k=1}^n \log a_{k}(x)}{\psi(n)}= \alpha \right\}.
\end{equation}
By Proposition \ref{l.Transformation}, we see that $J_\psi(\alpha) =E_\psi(\alpha/\gamma)$. This allows us to transform the study of the level set of fast Lyapunov exponents into the investigation of $E_{\psi}(\alpha)$.

We first give the necessary and sufficient conditions for $E_{\psi}(\alpha)$ to be non-empty.

\begin{lemma}\label{l.equivalent_increasing}
For any $0<\alpha < \infty$, $E_{\psi}(\alpha)$ is non-empty if and only if $\psi$ is equivalent to an increasing function.
\end{lemma}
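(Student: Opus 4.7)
The plan is to reduce to a statement about sequences $(a_n)_{n \geq 1} \in \mathbb{N}^{\mathbb{N}}$ via the symbolic coding, and to exploit that $n \mapsto \sum_{k=1}^{n} \log a_k(x)$ is automatically non-decreasing since each $a_k(x) \geq 1$. This observation makes the forward direction essentially automatic, while the reverse direction requires a greedy construction of a coding realizing the prescribed asymptotic.

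For the \emph{only if} direction, given $x \in E_\psi(\alpha)$, I would set
\[
\varphi(n) \coloneqq \frac{1}{\alpha}\sum_{k=1}^{n} \log a_k(x) + n.
\]
This is strictly increasing because the sum is non-decreasing and the $+n$ is strictly so. Dividing by $\psi(n)$, the first term tends to $1$ by the definition of $E_\psi(\alpha)$ and the second tends to $0$ by the standing hypothesis $\psi(n)/n \to \infty$, so $\varphi$ is an increasing function equivalent to $\psi$.

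For the \emph{if} direction, let $\tilde\psi$ be increasing with $\tilde\psi(n)/\psi(n) \to 1$ (and positive, after adjusting finitely many values). Construct $(a_n)_{n \geq 1}$ greedily by $S_0 \coloneqq 0$ and, for $n \geq 1$,
\[
a_n \coloneqq \max\bigl\{1,\ \bigl\lfloor \exp(\alpha\tilde\psi(n) - S_{n-1}) \bigr\rfloor \bigr\}, \qquad S_n \coloneqq S_{n-1} + \log a_n.
\]
A short induction on $n$ shows $|S_n - \alpha\tilde\psi(n)| \leq \log 2$: when $\alpha\tilde\psi(n) \geq S_{n-1}$, the standard floor estimate $\log a_n \leq \alpha\tilde\psi(n) - S_{n-1} < \log(a_n+1) \leq \log a_n + \log 2$ yields the bound; when $\alpha\tilde\psi(n) < S_{n-1}$, we have $a_n = 1$ and $S_n = S_{n-1}$, and the previous-step bound together with the monotonicity $\tilde\psi(n) \geq \tilde\psi(n-1)$ keeps the error within $\log 2$. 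Hence $S_n/\psi(n) \to \alpha$ using $\tilde\psi(n)/\psi(n) \to 1$.

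It remains to realize $(a_n)$ as the coding of a point in $\Lambda$. By Proposition \ref{p.cylinder}, the closed cylinders $\overline{I_n(a_1, \ldots, a_n)}$ are nested with diameters at most $C^n e^{-\gamma S_n}$, which tends to zero since $S_n \sim \alpha\psi(n) \gg n \log C$, so their intersection is a single point. To ensure this point lies in $\Lambda$ rather than in the countable exceptional set $\mathcal{Q}$, consider the uncountable family of perturbations $a_n^{(I)} \coloneqq a_n + \mathbf{1}_{\{n \in I\}}$ indexed by $I \subset \mathbb{N}$; each satisfies $|S_n^{(I)} - S_n| \leq n\log 2 = o(\psi(n))$, so each yields the same asymptotic $\alpha$, while producing $2^{\aleph_0}$ distinct nested-intersection points. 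Since $\mathcal{Q}$ is countable, uncountably many of these points lie in $\Lambda$, providing the required element of $E_\psi(\alpha)$. The main obstacle is the greedy induction, particularly the overshoot case where the floor prescription would give zero: one must force $a_n = 1$ and rely on the monotonicity of $\tilde\psi$ to recover control at the next step; the realization step is a secondary subtlety resolved by the uncountable-perturbation device.
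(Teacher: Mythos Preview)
Your proof is correct. The ``only if'' direction matches the paper's almost verbatim (the paper sets $\varphi(n)=\alpha^{-1}\sum_{k\le n}\log a_k(x)$ without the $+n$, accepting a merely non-decreasing function). For the ``if'' direction you take a different route: you build a single coding greedily so that $|S_n-\alpha\tilde\psi(n)|\le\log 2$ (in fact your Case~2 never occurs, since Case~1 gives $S_{n-1}\le\alpha\tilde\psi(n-1)\le\alpha\tilde\psi(n)$), then perturb over all $I\subset\mathbb N$ and invoke the countability of $\mathcal Q$. The paper instead leverages its ready-made Cantor family $E(\{s_n\},\{t_n\})$: with $s_n=t_n=e^{1+\alpha(\tilde\psi(n)-\tilde\psi(n-1))}$ every admissible coding satisfies $\sum_{k\le n}\log a_k(x)=\alpha\tilde\psi(n)+O(n)$, and non-emptiness of $E(\{s_n\},\{t_n\})\subset\Lambda$ is inherited from the Cantor construction underlying Proposition~\ref{p.Hausdorff_dimension_E_s_n_t_n}. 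Your approach is more self-contained; the paper's is shorter because the avoidance of $\mathcal Q$ has already been absorbed into that machinery. One small remark on your perturbation step: the claim that distinct $I$ yield distinct intersection points is not automatic, but since the open level-$n$ cylinders are pairwise disjoint intervals, any point lies in the closure of at most two of them, so the coding-to-point map is at most two-to-one---which is enough for your cardinality argument.
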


\begin{proof}
Let $0<\alpha<\infty$ be fixed.
        For the ``only if" part, we assume that $E_{\psi}(\alpha)$ is non-empty. Take $x\in E_{\psi}(\alpha)$, and define
        \begin{equation*}
            \varphi(n)\coloneqq \frac{\sum^n_{k=1}\log a_k(x)}{\alpha},\quad \forall n \in\mathbb N.
        \end{equation*}
        Then $\varphi(n+1)\geq \varphi(n)$, and
        \begin{equation*}
                \lim_{n\to \infty}\frac{\varphi(n)}{\psi(n)}
                =\lim_{n\to \infty}\frac
                {\sum^n_{k=1}\log a_k(x)}{\alpha\psi(n)}
                =1.
        \end{equation*}
        This means that $\psi$ is equivalent to the increasing function $\varphi$.

For the ``if" part, suppose that $\psi$ is equivalent to an increasing function $\widehat{\varphi}$. Then
        \begin{equation*}
                \lim_{n\to \infty}\frac{\widehat{\varphi}(n)}{n}=
                \lim_{n\to \infty}\frac{\psi(n)}{n}\cdot
                \lim_{n\to \infty}\frac{\widehat{\varphi}(n)}{\psi(n)}=\infty.
        \end{equation*}
Let $s_1=t_1\coloneqq e^{1+\alpha\widehat{\varphi}(1)}$ and $s_n=t_n\coloneqq e^{1+\alpha(\widehat{\varphi}(n)-\widehat{\varphi}(n-1))}$ for all $n \geq 2$. Then $E(\{s_n\},\{t_n\})$ as defined in \eqref{e.def_E_s_n_t_n} is non-empty. Moreover, $E(\{s_n\},\{t_n\})$ is a subset of $E_\psi(\alpha)$. In fact, for any $x\in E(\{s_n\},\{t_n\})$, we deduce that
\[
e^{1+\alpha\widehat{\varphi}(1)} < a_1(x) \leq 2e^{1+\alpha\widehat{\varphi}(1)}
\]
and
\begin{equation*}\label{e.estimation_a_n}
                e^{1+\alpha(\widehat{\varphi}(n)-\widehat{\varphi}(n-1))}
                <{a_n}(x)
                \leq
                2e^{1+\alpha(\widehat{\varphi}(n)-\widehat{\varphi}(n-1))},\quad \forall n \geq 2.
        \end{equation*}
For any $n \in \mathbb N$, we have
\begin{equation*}
 \alpha \cdot\frac{\widehat{\varphi}(n)+n}
                {\widehat{\varphi}(n)}
               <
                \frac{\sum^n_{k=1}\log a_k(x)}{\widehat{\varphi}(n)}
                \leq
                \alpha \cdot\frac{\widehat{\varphi}(n)+(1+\log2)n}
                {\widehat{\varphi}(n)},
        \end{equation*}
which implies that
\[
\lim_{n\to \infty}  \frac{\sum^n_{k=1}\log a_k(x)}{\widehat{\varphi}(n)}=1.
\]
Thus,
\begin{equation*}
                \lim_{n\to \infty}\frac{\sum^n_{k=1}\log a_k(x)}{\psi(n)}
                =\lim_{n\to \infty}\frac{\sum^n_{k=1}\log a_k(x)}{\widehat{\varphi}(n)} \cdot \lim_{n\to \infty}\frac{\widehat{\varphi}(n)}{\psi(n)}
                =\alpha,
        \end{equation*}
that is, $x\in E_{\psi}(\alpha)$. Hence, $E(\{s_n\},\{t_n\}) \subset E_{\psi}(\alpha)$, and so $E_\psi(\alpha)$ is non-empty.
\end{proof}

In the following, we calculate the Hausdorff dimension of $E_{\psi}(\alpha)$ whenever it is non-empty.
By Lemma \ref{l.equivalent_increasing}, without loss of generality, we assume that $\psi:\mathbb{N}\to\mathbb{R}_{>0}$ is increasing. For any $0<\alpha<\infty$, we will show that
\begin{equation*}
\dim_{\rm H}E_{\psi}(\alpha)= \frac{1}{(\gamma-1)\beta_\psi+1},
 \end{equation*}
where
\[
  \beta_\psi \coloneqq \limsup_{n\to \infty }
    \frac{\psi(n+1)}{\psi(n)}.
\]
The proof is divided into two parts: the lower and upper bounds of $\dim_{\rm H}E_{\psi}(\alpha)$.

For the lower bound of  $\dim_{\rm H}E_{\psi}(\alpha)$, define two sequences $\{s_{n}\}_{n\geq 1}$ and $\{t_{n}\}_{n\geq 1}$ by putting $s_1=t_1\coloneqq e^{\alpha \psi(1)+1}$ and $s_n=t_n\coloneqq e^{\alpha(\psi(n)-\psi(n-1))+1}$ for all $n\geq 2$. Since $\psi(n)/n \to \infty$ as $n\to \infty$, we derive that $E(\{s_n\},\{t_n\})$ in \eqref{e.def_E_s_n_t_n} is a subset of $E_\psi(\alpha)$.
From Proposition \ref{p.Hausdorff_dimension_E_s_n_t_n}, we conclude that
\begin{alignat}{2}
 \dim_{\mathrm{H}}E_{\psi}(\alpha) \geq&\dim_{\mathrm{H}}\mathbb{E} (\{s_n\}\{t_n\})
        && \qquad \qquad \qquad\notag \\
        =&\liminf_{n\to \infty}\frac{\sum_{k=1}^n\log t_k}
        {\gamma \sum_{k=1}^{n+1}\log s_k-\log t_{n+1}} \notag\\
        =&\liminf_{n\to \infty}\frac{n+\alpha\psi(n)}
        {(\gamma-1)\alpha \psi(n+1)+\alpha\psi(n)+\gamma (n+1)-1} \notag \\
        =&\frac{1}{(\gamma-1)\beta_\psi+1}.\label{e.A_lower}
\end{alignat}

To establish the upper bound of $\dim_{\rm H}E_{\psi}(\alpha)$, we define, for any $x\in \Lambda$,
\begin{equation}\label{e.def_k}
        \mathrm{k} (x)\coloneqq\lim_{n\to \infty}
        \frac{\sum^n_{k=1}\log a_k(x)}{n}
\end{equation}
and
\begin{equation}\label{e.def_tau}
        \tau(x)\coloneqq \limsup_{n\to \infty}\frac
        {\sum^{n+1}_{k=1}\log a_k(x)}
        {\sum^n_{k=1}\log a_k(x)} =1+\limsup_{n\to \infty}
        \frac{\log a_{n+1}(x)}
        {\sum^n_{k=1}\log a_k(x)}.
\end{equation}
We claim that $E_{\psi}(\alpha)$ is a subset of the set of $x\in \Lambda$ for which $\tau(x)=\beta_\psi$ and $\mathrm{k} (x)=\infty$. To see this, for any $x\in E_{\psi}(\alpha)$, we have
\begin{equation*}
\lim_{n\to \infty}  \frac{\sum^n_{k=1}\log a_k(x)}{\psi(n)} =\alpha.
\end{equation*}
Since $\psi(n)/n \to \infty$ as $n\to \infty$, it follows from \eqref{e.def_k} that $\mathrm{k} (x)=\infty$. In view of \eqref{e.def_tau}, we derive that
\begin{equation*}
     \tau(x)\coloneqq  \limsup_{n\to \infty}\frac
        {\sum^{n+1}_{k=1}\log a_k(x)}
        {\sum^n_{k=1}\log a_k(x)}
        =\limsup_{n\to \infty}\frac{\psi(n+1)}{\psi(n)}
        =\beta_\psi.
\end{equation*}
Thus, the assertion holds.

For any $1\leq \beta \leq \infty$, let
 \begin{equation}\label{e.def_Gamma_infty_hat}
                \Gamma_{\infty}(\beta)\coloneqq
                \left\{x\in  \Lambda:\tau(x)\geq \beta,
                \mathrm{k} (x)=\infty\right\}.
        \end{equation}
Then
 \begin{equation}\label{RelEG}
 E_{\psi}(\alpha)\subset\Gamma_{\infty}(\beta).
\end{equation}
The following lemma gives an upper bound of the Hausdorff dimension of $\Gamma_{\infty}(\beta)$, which in turns provides the upper bound for $\dim_{\rm H}E_{\psi}(\alpha)$.

\begin{lemma}\label{l.Hausdorff_dimension_Gamma_infty} For any $1\leq \beta \leq \infty$, we have
        \begin{equation*}\label{e.upperboundGamma_infty}
\dim_{\mathrm{H}}{\Gamma}_{\infty}(\beta)
                \leq \frac{1}{(\gamma-1)\beta+1}.
        \end{equation*}
\end{lemma}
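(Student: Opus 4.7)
The plan is to show $\mathcal{H}^{s}(\Gamma_\infty(\beta)) = 0$ for every $s > s_0 := 1/((\gamma-1)\beta + 1)$ by building, at each level $n$, a cover of $\Gamma_\infty(\beta)$ that fuses every sub-cylinder with large $(n+1)$-th digit into a single interval. Fixing $\epsilon > 0$ and $M > 0$, unpacking \eqref{e.def_k}, \eqref{e.def_tau} and \eqref{e.def_Gamma_infty_hat}, every $x \in \Gamma_\infty(\beta)$ satisfies both $\Pi_n(x) \geq e^{Mn}$ and $a_{n+1}(x) \geq \Pi_n(x)^{\beta - 1 - \epsilon}$ for infinitely many $n$, where $\Pi_n(x)$ is as in \eqref{Pin(x)}; the case $\beta = \infty$ will follow from the inclusion $\Gamma_\infty(\infty) \subseteq \Gamma_\infty(\beta)$ for every finite $\beta \geq 1$ once the finite case is settled.

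For each $n$ and each $(i_1, \ldots, i_n) \in \mathbb N^n$ with $P_n := i_1 \cdots i_n \geq e^{Mn}$, I would set
\begin{equation*}
U_n(i_1, \ldots, i_n) := \bigcup_{j \geq P_n^{\beta-1-\epsilon}} I_{n+1}(i_1, \ldots, i_n, j).
\end{equation*}
Hypothesis (1) of Definition \ref{def:MR} orders the branches $\{I_j\}_{j \in \mathbb N}$ consecutively in $[0,1]$, and, as in the proof of Proposition \ref{p.cylinder}, $T^n$ restricts to a monotone $C^1$ bijection on $I_n(i_1, \ldots, i_n)$. Consequently $\{I_{n+1}(i_1, \ldots, i_n, j)\}_{j \in \mathbb N}$ is a consecutive family of subintervals of $I_n(i_1, \ldots, i_n)$ that accumulates at a single endpoint, so $\overline{U_n(i_1, \ldots, i_n)}$ is itself a closed interval whose length, by Proposition \ref{p.cylinder} and the tail bound $\sum_{j \geq M} j^{-\gamma} \leq (\gamma-1)^{-1} M^{1-\gamma}$, satisfies
\begin{equation*}
\bigl|\overline{U_n(i_1, \ldots, i_n)}\bigr|
\leq \sum_{j \geq P_n^{\beta-1-\epsilon}} \frac{C^{n+1}}{(P_n j)^\gamma}
\leq \frac{C_1^{n+1}}{P_n^{(\gamma-1)\beta + 1 - \epsilon(\gamma-1)}}
\end{equation*}
for some $C_1 > 0$. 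The first paragraph then yields $\Gamma_\infty(\beta) \subseteq \bigcup_{n \geq N} \bigcup_{P_n \geq e^{Mn}} \overline{U_n(i_1, \ldots, i_n)}$ for every $N \in \mathbb N$.

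To close the estimate, for $s > s_0$ I would choose $\epsilon$ small enough that $\alpha := s\bigl[(\gamma-1)\beta + 1 - \epsilon(\gamma-1)\bigr] > 1$, pick $\alpha' \in (1, \alpha)$, and combine $P_n \geq e^{Mn}$ with $\sum_{(i_1, \ldots, i_n) \in \mathbb N^n} P_n^{-\alpha'} = \zeta(\alpha')^n$ to bound
\begin{equation*}
\sum_{P_n \geq e^{Mn}} P_n^{-\alpha}
\leq e^{-Mn(\alpha - \alpha')} \zeta(\alpha')^n.
\end{equation*}
Taking $M$ so large that $C_1^{s} \zeta(\alpha') e^{-M(\alpha-\alpha')} \leq 1/2$, the Hausdorff $s$-sum of the cover is bounded by $C_1^s \sum_{n \geq N} 2^{-n} \to 0$ as $N \to \infty$; since the maximum diameter of a covering element at level $n$ also tends to $0$, this gives $\mathcal{H}^s(\Gamma_\infty(\beta)) = 0$ and hence $\dim_{\mathrm H} \Gamma_\infty(\beta) \leq s$. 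Letting $s \downarrow s_0$ completes the finite-$\beta$ case.

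The hard part will be the geometric consolidation in the second paragraph. If one instead covered $\Gamma_\infty(\beta)$ by the individual sub-cylinders $I_{n+1}(i_1, \ldots, i_n, j)$, the tail $\sum_j j^{-s\gamma}$ would require $s\gamma > 1$ to converge, delivering only the weaker bound $\dim_{\mathrm H} \leq 1/\gamma$. Fusing the tail sub-cylinders into a single interval — possible only thanks to the consecutive ordering of branches enforced by Hypothesis (1) — is exactly what upgrades the exponent from $\gamma$ to $(\gamma-1)\beta + 1$ and produces the sharp upper bound.
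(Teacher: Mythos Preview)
Your proposal is correct and follows essentially the same strategy as the paper: both cover $\Gamma_\infty(\beta)$ by the fused tail-intervals $U_n(i_1,\dots,i_n)=\bigcup_{j\geq P_n^{\beta-1-\epsilon}} I_{n+1}(i_1,\dots,i_n,j)$ restricted to words with $P_n$ large, bound their diameters via Proposition~\ref{p.cylinder} and the tail estimate for $\sum j^{-\gamma}$, and then sum to show $\mathcal H^s=0$. The only notable differences are cosmetic---the paper parametrises $s=(1+2\epsilon)/((\gamma-1)(\beta-\epsilon)+1)$ and sends $\epsilon\to 0$, whereas you fix $s>s_0$ and tune $\epsilon$ and $M$; and the paper treats $\beta=1$ separately via $\Gamma_\infty(1)\subseteq\Lambda_\infty$ and Proposition~\ref{l.Hausdorff_dimension_limsup_log_pi/n}, whereas your argument absorbs that case uniformly (your exponent $(\gamma-1)\beta+1-\epsilon(\gamma-1)$ reduces to $\gamma-\epsilon(\gamma-1)$ there, which still exceeds $1/s$ for small $\epsilon$).
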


  \begin{proof}
  The proof proceeds by considering three cases: $\beta=1, 1<\beta <\infty$, and $\beta =\infty$.

\textbf{Case I: \bm{$\beta=1.$}} We observe that
\begin{equation*}
            {\Gamma}_{\infty}(1) = \Lambda_{\infty},
        \end{equation*}
        where $\Lambda_{\infty}$ is defined in \eqref{e.def_pi_infty}. It follows from Proposition \ref{l.Hausdorff_dimension_limsup_log_pi/n} that
        \begin{equation}\label{e.dim_Gamma<1/2}
                \dim_{\mathrm{H}}{\Gamma}_{\infty}(1)=\dim_{\mathrm{H}}\Lambda_{\infty}
                =\frac{1}{\gamma},
        \end{equation}
        which completes the proof for this case.

 \textbf{Case II: \bm{$1<\beta<\infty.$}} For any $0<\epsilon<\beta-1$, let $s\coloneqq \frac{1+2\epsilon}{(\gamma-1)(\beta-\epsilon)+1}$. Choose an integer $M\geq 2$ sufficiently large such that
        \begin{equation}\label{e.def_M}
                M^{\epsilon}> 2J_{\epsilon}C^s,
        \end{equation}
        where $J_{\epsilon}$ is defined in \eqref{e.def_J_epsilon}.
For any $n \in \mathbb N$, let
        \begin{equation}\label{e.def_B_n_ep_M}
                B_n(\epsilon,M)\coloneqq
                \Big\{
                x\in  \Lambda:
                a_{n+1}(x)>\big(
                        \prod_{k=1}^{n}
                        a_k(x)
                        \big)^{\beta-1-\epsilon}\ \text{and}\
                \prod_{k=1}^{n}
                a_k(x)\geq M^n
                \Big\}.
                \end{equation}
   Thus,
        \begin{equation}\label{e.cover_a_n}
                {\Gamma}_{\infty}(\beta)
                \subset \bigcap_{N=1}^{\infty}
                \bigcup_{n=N}^{\infty}
                B_n(\epsilon,M).
        \end{equation}
For any $n \in \mathbb N$, define
        \begin{equation}\label{e.def_D_n}
            \mathcal{D} _n(M)\coloneqq
        \left\{(i_1,\dots, i_{n})\in
        \mathbb{N}^n:
        i_1\cdots i_{n}\geq M^n\right\},
        \end{equation}
 and for any $(i_1,\dots, i_{n})
        \in \mathcal{D} _n(M)$, let
        \begin{equation}\label{e.def_J_n_another}
                J_n(i_1,\dots,i_{n}) \coloneqq
                \bigcup_{k>(i_1\cdots i_{n})
                ^{\beta-1-\epsilon}} I_{n+1}(i_1,\dots,i_{n},k).
        \end{equation}
Then $B_n(\epsilon,M)$ can be reformulated as
        \begin{equation}\label{e.a_n_J_n}
                B_n(\epsilon,M)=
                \bigcup_{(i_1,\dots, i_{n})
                \in \mathcal{D} _n(M)}
                J_n(i_1,\dots,i_{n}).
        \end{equation}
 Combining this with \eqref{e.cover_a_n}, for every $N\geq 1$, we see that,
        \begin{equation}\label{e.cover_by_J}
                {\Gamma}_{\infty}(\beta)\subset
                \bigcup^\infty_{n=N}\bigcup_{(i_0,\dots, i_{n-1})
                \in \mathcal{D}_n(M)}J_n(i_0,\dots,i_{n-1}).
        \end{equation}
For any $n \in \mathbb N$ and $(i_1,\dots, i_{n})\in \mathcal{D} _n(M)$, we estimate the diameter of $J_n(i_1,\dots,i_{n})$. Since
$$
\sum_{k>(i_1\cdots i_{n})
        ^{\beta-1-\epsilon}}\frac{1}{k^{\gamma}} \leq \frac{2^{\gamma-1}}
        {(\gamma-1)(i_1\cdots i_{n})^{(\beta-1-\epsilon)(\gamma-1)}},
$$
we deduce from Proposition \ref{p.cylinder} and \eqref{e.def_J_n_another} that
     \begin{alignat*}{2}
        |J_n(i_1,\dots,i_{n})|\leq\frac{C^{n+1}}
        {(i_1\cdots i_{n})^{\gamma}}
        \sum_{k>(i_1\cdots i_{n})
        ^{\beta-1-\epsilon}}\frac{1}{k^{\gamma}}
       \leq \frac{C^{n+1}2^{\gamma-1}}
        {(\gamma-1)(i_1\cdots i_{n})^{(\gamma-1)(\beta-\epsilon)+1}}.
        \end{alignat*}
Note that $i_1\cdots i_{n}\geq M^n$, we derive that
 \begin{equation*}\label{J^s}
|J_n(i_1,\dots,i_{n})|^s \leq \left(\frac{2^{\gamma-1}C}{\gamma-1}\right)^s \frac{C^{sn}}
        {(i_1\cdots i_{n})^{1+2\epsilon}} \leq \left(\frac{2^{\gamma-1}C}{\gamma-1}\right)^s \left(\frac{C^{s}}{M^\epsilon}\right)^n \frac{1}
        {(i_1\cdots i_{n})^{1+\epsilon}}.
 \end{equation*}
 Hence,
\begin{alignat*}{2}
 \sum_{(i_1,\dots, i_{n})\in \mathcal{D} _n(M)}|J_n(i_1,\dots,i_{n})|^s \leq &\left(\frac{2^{\gamma-1}C}{\gamma-1}\right)^s \left(\frac{C^{s}}{M^\epsilon}\right)^n \sum_{(i_1,\dots, i_{n})\in \mathcal{D} _n(M)} \frac{1}
        {(i_1\cdots i_{n})^{1+\epsilon}}\\
\leq & \left(\frac{2^{\gamma-1}C}{\gamma-1}\right)^s \left(\frac{J_\epsilon C^{s}}{M^\epsilon}\right)^n.
\end{alignat*}
From the definition of the $s$-dimensional Hausdorff measure, it follows that
 \begin{alignat*}{2}
 \mathcal{H}^{s}\left({\Gamma}_{\infty}(\beta)\right)
                \leq& \liminf_{N \to \infty}
                \sum_{n=N}^{\infty}
                \sum_{(i_1,\dots, i_{n})
                \in \mathcal{D} _n(M)}
                |J_n(i_1,\dots,i_{n})|^s \\
                \leq & \left(\frac{2^{\gamma-1}C}{\gamma-1}\right)^s \liminf_{N \to \infty}
                \sum_{n=N}^{\infty}\left(\frac{J_\epsilon C^{s}}{M^\epsilon}\right)^n\\
                 \leq & \left(\frac{2^{\gamma-1}C}{\gamma-1}\right)^s \liminf_{N \to \infty}
                \sum_{n=N}^{\infty} \frac{1}{2^n} =0. && \  \big(\text{by}\ \eqref{e.def_M}\big)
        \end{alignat*}
This implies that
 \begin{equation*}
                \dim_{\mathrm{H}}
              {\Gamma}_{\infty}(\beta)
                \leq s\coloneqq \frac{1+2\epsilon}{(\gamma-1)(\beta-\epsilon)+1}.
        \end{equation*}
 Letting $\epsilon\to 0^{+}$, we obtain $$\dim_{\mathrm{H}}{\Gamma}_{\infty}
        (\beta)\leq \frac{1}{(\gamma-1)\beta+1}.$$

\textbf{Case III: \bm{$\beta=\infty.$}} For any $K>1$, since
  \begin{equation*}
           {\Gamma}_{\infty}(\infty)\subset
           {\Gamma}_{\infty}(K),
        \end{equation*}
it follows from the result of Case II that
        \begin{equation*}
            \dim_{\mathrm{H}}{\Gamma}_{\infty}(\infty)
            \leq \dim_{\mathrm{H}}{\Gamma}_{\infty}(K)=\frac{1}{(\gamma-1)K+1}.
        \end{equation*}
Taking the limit as $K\to \infty$, we have $\dim_{\mathrm{H}}{\Gamma}_{\infty}(\infty)=0$.
\end{proof}

By Lemma \ref{l.Hausdorff_dimension_Gamma_infty}, we conclude that for any $0<\alpha<\infty$,
        \begin{equation*}\label{e.A_upper}
               \dim_{\mathrm{H}}E_{\psi}(\alpha)\leq \dim_{\mathrm{H}}{\Gamma}_{\infty}(\beta_\psi)=\frac{1}{(\gamma-1)\beta_\psi+1}.
        \end{equation*}
Combining this with \eqref{e.A_lower} and Proposition \ref{l.Transformation}, we complete the proof of Theorem \ref{t.Fast_Lyapunov} for the case $0<\alpha<\infty$.

\subsection{Case $\alpha =\infty$}
For any $x\in \Lambda$, we observe that
\[
\lim_{n\to \infty} \frac{\sum^n_{k=1}a_k(x)}{\psi(n)} =\infty \quad \Longleftrightarrow\quad \liminf_{n\to \infty} \frac{\sum^n_{k=1}a_k(x)}{\psi(n)} =\infty.
\]
This implies that $E_\psi(\infty)$ coincides with the set
\[
\left\{x\in \Lambda: \liminf_{n\to \infty} \frac{\sum^n_{k=1}a_k(x)}{\psi(n)} =\infty\right\}.
\]
The Hausdorff dimension of the latter will be given in Section \ref{Eunderline}. It follows from \eqref{Bpsiequal} that
\begin{equation*}
\dim_{\rm H}E_\psi(\infty)=\dim_{\rm H}\left\{x\in \Lambda: \liminf_{n\to \infty} \frac{\sum^n_{k=1}a_k(x)}{\psi(n)} =\infty\right\}=\frac{1}{(\gamma-1)B_\psi+1},
 \end{equation*}
where
\begin{equation*}
   B_\psi\coloneqq \limsup_{n\to \infty}\sqrt[n]{\psi(n)}.
\end{equation*}
We complete the proof of Theorem \ref{t.Fast_Lyapunov} for the case $\alpha =\infty$.

We end this section with some corollaries of Proposition \ref{p.Hausdorff_dimension_E_s_n_t_n}, Lemma \ref{l.Hausdorff_dimension_Gamma_infty} and Theorem \ref{t.Fast_Lyapunov}.

\begin{cor}\label{c.Hausdorff_dim_F}
        \begin{equation*}
                \dim_{\mathrm{H}}
                \left\{x\in \Lambda:\lim_{n \to \infty}
                a_n(x)=\infty\right\}
                =\frac{1}{\gamma}.
        \end{equation*}
\end{cor}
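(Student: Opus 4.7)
The plan is to sandwich the Hausdorff dimension of $F \coloneqq \{x\in \Lambda : \lim_{n \to \infty} a_n(x) = \infty\}$ between $1/\gamma$ and $1/\gamma$ using two results already established in the excerpt: Corollary \ref{c.D} on the one hand, and Proposition \ref{l.Hausdorff_dimension_limsup_log_pi/n} on the other.

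For the lower bound, I would observe that the set $E(\{e^n\}, \{e^n\})$ defined in \eqref{e.def_D} consists of points $x \in \Lambda$ satisfying $e^n < a_n(x) \leq 2e^n$ for every $n \geq 1$. In particular, $a_n(x) > e^n \to \infty$, so
\[
E(\{e^n\}, \{e^n\}) \subset F.
\]
Invoking Corollary \ref{c.D}, this immediately yields $\dim_{\mathrm{H}} F \geq \dim_{\mathrm{H}} E(\{e^n\}, \{e^n\}) = 1/\gamma$.

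For the upper bound, I would show that $F \subset \Lambda_\infty$, where $\Lambda_\infty$ is the set defined in \eqref{e.def_pi_infty}. Indeed, if $\lim_{n \to \infty} a_n(x) = \infty$, then $\log a_n(x) \to \infty$, and by the elementary fact that Cesàro averages inherit limits, $\frac{1}{n}\sum_{k=1}^n \log a_k(x) \to \infty$ as well. In particular, the $\limsup$ is $\infty$, so $x \in \Lambda_\infty$. Applying Proposition \ref{l.Hausdorff_dimension_limsup_log_pi/n} gives $\dim_{\mathrm{H}} F \leq \dim_{\mathrm{H}} \Lambda_\infty = 1/\gamma$.

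Combining both bounds yields the stated equality. There is no real obstacle here: the whole argument is just a matter of correctly identifying which of the two previously established dimension computations provides each side of the inequality. The only subtle point worth articulating carefully is the Cesàro step for the upper bound, since the hypothesis $a_n(x) \to \infty$ is pointwise while the defining condition of $\Lambda_\infty$ is a statement about averages.
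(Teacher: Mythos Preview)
Your proof is correct and essentially matches the paper's approach. The only cosmetic difference is that for the upper bound the paper cites Lemma~\ref{l.Hausdorff_dimension_Gamma_infty} with $\beta=1$ rather than Proposition~\ref{l.Hausdorff_dimension_limsup_log_pi/n} directly; but since the $\beta=1$ case of that lemma is precisely the identification $\Gamma_\infty(1)=\Lambda_\infty$ together with Proposition~\ref{l.Hausdorff_dimension_limsup_log_pi/n}, your route is simply the unwound version of the same argument.
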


\begin{proof}
        Setting $\beta =1$ in Lemma \ref{l.Hausdorff_dimension_Gamma_infty}, we derive that
        \begin{equation*}
                \dim_{\mathrm{H}}\{x\in \Lambda:\lim_{n \to \infty}
                a_n(x)=\infty\}
                \leq \dim_{\mathrm{H}}\widehat{\Gamma}_{\infty}(1)=\frac{1}{\gamma}.
        \end{equation*}
Note that
        \begin{equation*}
         E(\{e^n\}, \{e^n\})
        \subset \{x\in \Lambda:\lim_{n \to \infty}
        a_n(x)=\infty\},
        \end{equation*}
 it follows from Corollary \ref{c.D} that
        \begin{equation*}
        \dim_{\mathrm{H}}\{x\in \Lambda:\lim_{n \to \infty}
        a_n(x)=\infty\}\geq
        \dim_{\mathrm{H}}
       E(\{e^n\},\{e^n\}) =\frac{1}{\gamma}.
        \end{equation*}
 \end{proof}

The following result gives a full description of the growth rate of the coding of the MR map.

\begin{cor}\label{c.growth_rate}
Let $\phi : \mathbb{N} \to \mathbb{R}_{>0}$ be a function such that $\phi(n) \to \infty$ as $n \to \infty$. Then
        \begin{equation}\label{e.growth_rate}
                \dim_{\mathrm{H}}
                \left\{x\in  \Lambda:\lim_{n\to \infty }
                \frac{\log a_n(x)}{\phi(n)}=1\right\}
                =\frac{1}{\gamma+(\gamma-1)\xi_\phi},
        \end{equation}
        where
        \begin{equation*}
                \xi_\phi\coloneqq \limsup_{n \to \infty}
                \frac{\phi(n+1)}{\phi(1)+\cdots+\phi(n)}.
        \end{equation*}
\end{cor}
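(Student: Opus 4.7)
The plan is to prove both inequalities by realizing the set $\mathcal{E} := \{x\in\Lambda : \log a_n(x)/\phi(n)\to 1\}$ as (a subset of) two sets whose dimensions are already computed: for the lower bound, a set of the form $E(\{s_n\},\{t_n\})$ from Proposition \ref{p.Hausdorff_dimension_E_s_n_t_n}; for the upper bound, a set of the form $\Gamma_\infty(\beta)$ from Lemma \ref{l.Hausdorff_dimension_Gamma_infty}. Throughout, write $S_n := \phi(1)+\cdots+\phi(n)$. Note that since $\phi(n)\to\infty$, an averaging argument gives $S_n/n\to\infty$.

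For the lower bound, I would set $s_n = t_n := e^{\phi(n)}$ (well-defined for $n$ large since $\phi(n)\to\infty$; the first few indices can be ignored since they affect the Hausdorff dimension trivially). The hypotheses of Proposition \ref{p.Hausdorff_dimension_E_s_n_t_n} hold: $s_n/t_n\equiv 1$ and $\sum_{k=1}^n\log s_k/n = S_n/n\to\infty$. Any $x\in E(\{s_n\},\{t_n\})$ satisfies $e^{\phi(n)} < a_n(x)\le 2e^{\phi(n)}$, hence $\log a_n(x)/\phi(n)\to 1$, so $E(\{s_n\},\{t_n\})\subset\mathcal{E}$. Applying Proposition \ref{p.Hausdorff_dimension_E_s_n_t_n} with $\log s_k = \log t_k = \phi(k)$ yields
\[
\dim_{\mathrm H}\mathcal{E} \ge \liminf_{n\to\infty}\frac{S_n}{\gamma S_{n+1}-\phi(n+1)} = \liminf_{n\to\infty}\frac{1}{\gamma + (\gamma-1)\phi(n+1)/S_n} = \frac{1}{\gamma + (\gamma-1)\xi_\phi},
\]
using $\liminf 1/a_n = 1/\limsup a_n$.

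For the upper bound, I claim $\mathcal{E}\subset\Gamma_\infty(1+\xi_\phi)$, where $\Gamma_\infty(\beta)$ is defined in \eqref{e.def_Gamma_infty_hat}. Fix $x\in\mathcal{E}$ and write $\log a_k(x) = \phi(k)(1+\varepsilon_k)$ with $\varepsilon_k\to 0$. A standard Stolz-type argument (split at a large index $K$, bound the tail by $\varepsilon S_n$, observe $S_n\to\infty$) gives $\sum_{k=1}^n\log a_k(x)/S_n\to 1$. Combining with $S_n/n\to\infty$ yields $\mathrm{k}(x)=\infty$. Moreover,
\[
\tau(x) = 1 + \limsup_{n\to\infty}\frac{\log a_{n+1}(x)}{\sum_{k=1}^n\log a_k(x)} = 1 + \limsup_{n\to\infty}\frac{\phi(n+1)}{S_n} = 1+\xi_\phi,
\]
so $x\in\Gamma_\infty(1+\xi_\phi)$. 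Lemma \ref{l.Hausdorff_dimension_Gamma_infty} then gives
\[
\dim_{\mathrm H}\mathcal{E}\le \dim_{\mathrm H}\Gamma_\infty(1+\xi_\phi) \le \frac{1}{(\gamma-1)(1+\xi_\phi)+1} = \frac{1}{\gamma+(\gamma-1)\xi_\phi},
\]
which matches the lower bound and completes the proof.

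The main obstacle is minor: one must be careful about the case $\xi_\phi=\infty$ (where both sides are $0$, handled by Case III of Lemma \ref{l.Hausdorff_dimension_Gamma_infty} and a vanishing lower bound), and about justifying $\sum_{k=1}^n\log a_k(x)/S_n\to 1$ despite $\varepsilon_k$ being only eventually small. Aside from this, everything is a direct reduction to the tools already developed in Section \ref{s.distributiondigits} and the proof of Theorem \ref{t.Fast_Lyapunov}.
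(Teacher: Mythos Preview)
Your proposal is correct and follows essentially the same approach as the paper. The lower bound via Proposition \ref{p.Hausdorff_dimension_E_s_n_t_n} with $s_n=t_n$ proportional to $e^{\phi(n)}$ is identical; for the upper bound the paper packages the argument through Theorem \ref{t.Fast_Lyapunov} applied to $\widehat{\psi}(n):=\sum_{k\le n}\phi(k)$, whereas you unroll that step and go directly to the inclusion $\mathcal{E}\subset\Gamma_\infty(1+\xi_\phi)$ and Lemma \ref{l.Hausdorff_dimension_Gamma_infty}---but since the upper bound in Theorem \ref{t.Fast_Lyapunov} is proved precisely via that inclusion, the two routes coincide.
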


\begin{proof}
The lower bound follows from Proposition \ref{p.Hausdorff_dimension_E_s_n_t_n} by letting $s_n=t_n \coloneqq 2e^{\phi(n)}$. For the upper bound, define $\widehat{\psi}(n)\coloneqq \sum_{k=1}^
        n \phi(k)$ for all $n \in \mathbb N$. Then $\widehat{\psi}$ is increasing and $\widehat{\psi}(n)/n\to \infty$ as $n \to \infty$. Moreover, for any $x\in \Lambda$, we have
        \begin{equation*}
                \lim_{n\to \infty}\frac{\log a_n(x)}{\phi(n)}=1
                 \quad \Longrightarrow \quad
                \lim_{n\to \infty}\frac{\sum^n_{k=1}\log a_k(x)}{\widehat{\psi}(n)}=1.
        \end{equation*}
        By Theorem \ref{t.Fast_Lyapunov}, we see that
        \begin{equation*}
                \dim_{\mathrm{H}}\left\{ x\in \Lambda:\lim_{n\to \infty}
                \frac{\log a_n(x)}{\phi(n)}=1 \right\}
                \leq \dim_{\mathrm{H}}E_{\widehat{\psi}}(1)
                =\frac{1}{(\gamma-1)\beta_{\widehat{\psi}}+1},
        \end{equation*}
   where
        \begin{equation*}
                \beta_{\widehat{\psi}}
                :=\limsup_{n\to\infty}
                \frac{\widehat{\psi}(n+1)}{\widehat{\psi}(n)}
                =1+\limsup_{n\to\infty}\frac{\phi(n+1)}{\phi(1)+\cdots +\phi(n)}
                =1+\xi_\phi.
        \end{equation*}
Hence,  \begin{equation*}
                \dim_{\mathrm{H}}
                \left\{x\in  \Lambda:\lim_{n\to \infty }
                \frac{\log a_n(x)}{\phi(n)}=1\right\}
                \leq \frac{1}{\gamma+(\gamma-1)\xi_\phi}.
        \end{equation*}
        We complete the proof.
\end{proof}

\section{Proof of Theorem \ref{t.Upper_and_lower_fast_Lyapunov_spectrum}}\label{s.proofoftheoremb}

In this section, we provide the proof of Theorem \ref{t.Upper_and_lower_fast_Lyapunov_spectrum}.
Assume that $\psi:\mathbb{N}\to\mathbb{R}_{>0}$ is a function such that $\psi(n)/n \to \infty$ as $n\to \infty$.

Since $J_\psi(0) \subset \overline{J}_\psi(0)$ and $J_\psi(0) \subset \underline{J}_\psi(0)$, it follows from Theorem \ref{t.Fast_Lyapunov} that $\overline{J}_\psi(0)$ and $\underline{J}_\psi(0)$ are of full Lebesgue measure. Hence, they have Hausdorff dimension one. This completes the proof of Theorem \ref{t.Upper_and_lower_fast_Lyapunov_spectrum} for the case $\alpha =0$.

For any $0< \alpha \leq \infty$, define
\begin{equation*}\label{e.def_E_psi_alpha}
        \overline{E}_{\psi}(\alpha)\coloneqq
        \left\{x\in \Lambda:\limsup_{n \to \infty}
        \frac{\sum_{k=1}^n \log a_{k}(x)}{\psi(n)}= \alpha \right\}
\end{equation*}
and
\[
  \underline{E}_{\psi}(\alpha)\coloneqq
        \left\{x\in \Lambda:\liminf_{n \to \infty}
        \frac{\sum_{k=1}^n \log a_{k}(x)}{\psi(n)}= \alpha \right\}.
\]
To prove Theorem \ref{t.Upper_and_lower_fast_Lyapunov_spectrum}, by Proposition \ref{l.Transformation}, it is sufficient to calculate the Hausdorff dimension of $\overline{E}_{\psi}(\alpha)$ and $\underline{E}_{\psi}(\alpha)$.

For any $b,c \in (1,\infty)$, define
\begin{equation*}
    \overline{D}(b,c)\coloneqq \left\{x\in \Lambda:\Pi_{n}(x)\geq b^{c^n}\
\mathrm{for\ infinitely\ many}\ n\in \mathbb N\right\},
\end{equation*}
 and
 \begin{equation*}
     \underline{D}(b,c)\coloneqq
     \left\{x\in \Lambda: \Pi_{n}(x)\geq b^{c^n}\
\mathrm{for\ sufficiently\ large}\ n\in \mathbb N\right\},
\end{equation*}
where $\Pi_n(x)$ is given by \eqref{Pin(x)}.

\begin{lemma}\label{l.Luczak}
Let $b,c \in (1,\infty)$ be fixed. For any $d\in (1,c)$, if $x\in \overline{D}(b,c)$, then
        \begin{equation}\label{e.Pi_n>_max_i.m.}
                \Pi_{n+1}(x)>
                \max \left\{
                (\Pi_n(x))^d,b^{d^{n+1}}\right\}
                \ \mathrm{for\ infinitely\ many}\ n\in \mathbb N.
        \end{equation}
\end{lemma}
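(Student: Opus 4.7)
\smallskip

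\noindent\textbf{Proof proposal for Lemma \ref{l.Luczak}.}

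My plan is to argue by contradiction. Suppose that the conclusion fails: then there exists $N\in \mathbb{N}$ such that for every $n\geq N$,
\[
\Pi_{n+1}(x)\leq \max\bigl\{(\Pi_n(x))^d,\,b^{d^{n+1}}\bigr\},
\]
since the negation of ``$A>\max\{X,Y\}$'' is ``$A\leq X$ or $A\leq Y$'', which is equivalent to $A\leq\max\{X,Y\}$. The goal is to show that this forces $\Pi_n(x)$ to grow at most like $b^{d^n}$ up to a bounded perturbation, which contradicts the hypothesis $\Pi_{n_k}(x)\geq b^{c^{n_k}}$ along some sequence $n_k\to\infty$ (which is the assumption $x\in\overline{D}(b,c)$).

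The key step is a straightforward induction: I claim that
\[
\Pi_n(x)\leq \max\bigl\{\Pi_N(x)^{d^{n-N}},\,b^{d^n}\bigr\}\qquad \text{for all } n\geq N.
\]
The base case $n=N$ is trivial. For the induction step, assume the bound at level $n$. Then
$(\Pi_n(x))^d\leq \max\bigl\{\Pi_N(x)^{d^{n+1-N}},\,b^{d^{n+1}}\bigr\}$, because raising either term in the max to the $d$-th power gives precisely the corresponding term at level $n+1$. Combining this with the standing hypothesis $\Pi_{n+1}(x)\leq \max\{(\Pi_n(x))^d,\,b^{d^{n+1}}\}$ yields
\[
\Pi_{n+1}(x)\leq \max\bigl\{\Pi_N(x)^{d^{n+1-N}},\,b^{d^{n+1}}\bigr\},
\]
completing the induction.

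To close the argument, pick any $n=n_k\geq N$ from the infinite set on which $\Pi_n(x)\geq b^{c^n}$. Taking logarithms of the inductive bound gives
\[
c^n\log b\leq \max\bigl\{d^{n-N}\log \Pi_N(x),\, d^n \log b\bigr\}.
\]
Since $c>d>1$, both $(c/d)^n\to\infty$ and $(c/d)^n/d^{-N}\to\infty$, so both alternatives on the right are eventually strictly smaller than the left-hand side. This contradicts the existence of infinitely many such $n_k$, and the proof is complete.

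I do not foresee a real obstacle here: the only subtlety is making sure the max-over-two-alternatives structure is preserved under the inductive step, and that is handled by the observation that $(\,\cdot\,)^d$ distributes over the max (being monotone), so each alternative at level $n$ maps to the corresponding alternative at level $n+1$ cleanly.
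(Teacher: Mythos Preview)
Your proof is correct. The contradiction argument with the inductive bound $\Pi_n(x)\leq \max\{\Pi_N(x)^{d^{n-N}},b^{d^n}\}$ is clean, and the final comparison of growth rates ($c^n$ versus $d^n$) closes it without issue.

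It is, however, a genuinely different route from the paper's. The paper argues directly: given any $m$, it picks $k>m$ with $\Pi_k(x)\ge b^{c^k}$ and introduces the auxiliary function $f(n)=b^{c^k d^{n-k}}$, noting that $\Pi_m(x)<f(m)$ (since $c^k d^{m-k}\to\infty$ as $k\to\infty$) while $\Pi_k(x)\ge f(k)$. Choosing the \emph{largest} $n\in[m,k)$ with $\Pi_n(x)<f(n)$ gives $\Pi_{n+1}(x)\ge f(n+1)=(f(n))^d$, and since $f(n)>\Pi_n(x)$ and $f(n)>b^{d^n}$ this yields the desired inequality at that $n$. So the paper uses a ``last-crossing'' argument to locate explicitly an index beyond each $m$, whereas you propagate a global upper bound forward and derive a contradiction. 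Your approach is arguably more mechanical to check; the paper's has the minor advantage of being constructive (it pinpoints where the inequality holds). Both are short and neither requires anything beyond elementary manipulations.
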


\begin{proof}
Fix $b,c \in (1,\infty)$ and $d\in (1,c)$. For any $m\in \mathbb{N}$ and $x\in \overline{D}(a,c)$, since $d<c$, we can find an integer $k>m$ such that
        \begin{equation*}
                \Pi_{m}(x)<b^{c^kd^{m-k}}
                \quad \mathrm{and}\quad
                \Pi_k(x)\geq b^{c^k}.
        \end{equation*}
For any $n \in \mathbb N$, define $f(n)\coloneqq b^{c^kd^{n-k}}$. It follows that
        \begin{equation*}
                \Pi_{m}(x)<f(m)\quad \mathrm{and}\quad
                \Pi_{k}(x)\geq f(k).
        \end{equation*}
 Choose the largest $n$ such that $m\leq n<k$ and $\Pi_n(x)<f(n)$. Combining this with the observation that $f(n) > b^{d^{n}}$, we derive that
        \begin{equation*}
                \Pi_{n+1}(x)\geq f(n+1)=
                (f(n))^d>\max \left\{
                        (\Pi_n(x))^d,
                        b^{d^{n+1}}
                \right\}.
        \end{equation*}
        The proof is completed.
\end{proof}

The Hausdorff dimension of $\overline{D}(b,c)$ and $\underline{D}(b,c)$ is as follows.

\begin{lemma}\label{l.Hausdorff_dimension_pi_n_x>=a^c^n}
For any $b,c \in (1,\infty)$,
        \begin{equation*}
                \dim_{\mathrm{H}}
                \underline{D}(b,c)
                =\dim_{\mathrm{H}}
                \overline{D}(b,c)
                =\frac{1}{(\gamma-1)c+1}.
         \end{equation*}
\end{lemma}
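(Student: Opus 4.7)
The plan is to establish matching bounds. Since $\underline{D}(b,c)\subset\overline{D}(b,c)$, it suffices to prove $\dim_{\mathrm H}\underline{D}(b,c)\geq 1/((\gamma-1)c+1)$ and $\dim_{\mathrm H}\overline{D}(b,c)\leq 1/((\gamma-1)c+1)$.

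\textbf{Lower bound.} I would feed a carefully chosen pair $(\{s_n\},\{t_n\})$ into Proposition~\ref{p.Hausdorff_dimension_E_s_n_t_n}. Take $s_1=t_1\coloneqq\max(b^c,2)$ and $s_n=t_n\coloneqq 2\,b^{(c-1)c^{n-1}}$ for $n\geq 2$; the factor of $2$ merely ensures $s_n,t_n\geq 2$ and does not affect the limits. A telescoping computation gives $\prod_{k=1}^n s_k\geq b^{c^n}$, so every $x\in E(\{s_n\},\{t_n\})$ satisfies $\Pi_n(x)>\prod_{k=1}^n s_k\geq b^{c^n}$ for all $n$; hence $E(\{s_n\},\{t_n\})\subset\underline{D}(b,c)$. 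The sums $\sum_{k=1}^n\log s_k\sim c^n\log b$ grow super-linearly and $s_n/t_n=1$, so hypotheses \eqref{e.sumlog_s_k/n=infty} and \eqref{e.inf_s_n/t_n>c>0} hold. Proposition~\ref{p.Hausdorff_dimension_E_s_n_t_n} then yields
\[
\dim_{\mathrm H}\underline{D}(b,c)\geq\liminf_{n\to\infty}\frac{c^n\log b+O(n)}{\gamma c^{n+1}\log b-(c-1)c^n\log b+O(n)}=\frac{1}{(\gamma-1)c+1}.
\]

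\textbf{Upper bound, easy case.} Fix $d\in(1,c)$ and small $\epsilon>0$, and set $s\coloneqq(1+2\epsilon)/((\gamma-1)d+1)$. By Lemma~\ref{l.Luczak}, for every $N\geq 1$,
\[
\overline{D}(b,c)\subset\bigcup_{n\geq N}B'_n,\qquad B'_n\coloneqq\{x\in\Lambda:\,a_{n+1}(x)>\Pi_n(x)^{d-1}\ \text{and}\ \Pi_{n+1}(x)>b^{d^{n+1}}\}.
\]
Given $(i_1,\ldots,i_n)\in\mathbb{N}^n$ with $P\coloneqq i_1\cdots i_n$, set $K_0(P)\coloneqq\max(P^{d-1},\,b^{d^{n+1}}/P)$ and form the grouped cover
\[
J_n(i_1,\ldots,i_n)\coloneqq\bigcup_{k>K_0(P)}\overline{I}_{n+1}(i_1,\ldots,i_n,k),
\]
so that $B'_n\subset\bigcup_{(i_1,\ldots,i_n)}J_n(i_1,\ldots,i_n)$. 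Proposition~\ref{p.cylinder} together with $\sum_{k>K_0}1/k^\gamma\leq C_0/K_0^{\gamma-1}$ gives $|J_n|\leq C^{n+1}C_0/(P^\gamma K_0(P)^{\gamma-1})$. On the subset $P\geq b^{d^n}$ we have $K_0(P)=P^{d-1}$, hence $|J_n|^s\leq C_1^{ns}/P^{1+2\epsilon}$; then the truncation trick $1/P^{1+2\epsilon}\leq 1/(b^{d^n\epsilon}P^{1+\epsilon})$ (exactly as in the proof of Lemma~\ref{l.Hausdorff_dimension_Gamma_infty}) bounds the contribution by $C_1^{ns}J_\epsilon^n/b^{d^n\epsilon}$, which decays super-exponentially in $n$.

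\textbf{Main obstacle.} The tricky case is $P<b^{d^n}$, where $K_0(P)=b^{d^{n+1}}/P$ and $|J_n|^s\leq C_1^{ns}/(P^s\,b^{d^{n+1}(\gamma-1)s})$. Since $s<1$, the single-variable sum $\sum_i 1/i^s$ diverges, so the earlier truncation trick fails. My plan is to rewrite $1/P^s=P^{1-s}/P\leq b^{d^n(1-s)}/P$ on $\{P<b^{d^n}\}$ and then use the elementary bound $\sum_{(i_1,\ldots,i_n):P<M}1/P\leq\bigl(\sum_{i\leq M}1/i\bigr)^n\leq(\log M+O(1))^n$. This produces
\[
\sum_{P<b^{d^n}}|J_n|^s\leq C_1^{ns}(d^n\log b+O(1))^n\cdot b^{d^n[1-s((\gamma-1)d+1)]}=C_1^{ns}(d^n\log b+O(1))^n\cdot b^{-2\epsilon d^n},
\]
and the double-exponential decay $b^{-2\epsilon d^n}$ dominates the sub-double-exponential growth $(d^n)^n=e^{n^2\log d}$. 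Summing both cases over $n\geq N$ and sending $N\to\infty$ gives $\mathcal{H}^s(\overline{D}(b,c))=0$, so $\dim_{\mathrm H}\overline{D}(b,c)\leq s$. Letting $\epsilon\to 0^+$ and $d\to c^-$ completes the upper bound.
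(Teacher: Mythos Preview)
Your argument is correct; both the lower bound via Proposition~\ref{p.Hausdorff_dimension_E_s_n_t_n} and the upper bound via Lemma~\ref{l.Luczak} follow the paper's overall strategy, but your treatment of the upper bound takes a genuinely different route. After invoking Lemma~\ref{l.Luczak}, the paper does \emph{not} split according to the size of $P=\Pi_n(x)$. Instead it exploits the interpolation inequality $\max\{A,B\}\geq A^{s}B^{1-s}$ with a parameter $s\in(1/d,1)$, obtaining the single cutoff
\[
a_{n+1}(x)>\Pi_n(x)^{sd-1}\,b^{(1-s)d^{n+1}},
\]
where now $sd-1>0$. This makes $|J_n(i_1,\dots,i_n)|^t$ directly summable over \emph{all} of $\mathbb{N}^n$ for $t=(1+\epsilon)/((\gamma-1)sd+1)$, with the residual factor $b^{-(\gamma-1)(1-s)t\,d^{n+1}}$ supplying the super-exponential decay; one then lets $\epsilon\to 0$, $s\to 1^-$, $d\to c^-$. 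Your alternative---splitting into $P\geq b^{d^n}$ and $P<b^{d^n}$ and, in the second case, converting $1/P^{s}$ to $P^{1-s}/P$ and controlling $\sum_{P<M}1/P$ by $(\log M+O(1))^n$---is more elementary but requires the extra observation that the sub-double-exponential growth $e^{O(n^2)}$ is killed by the double-exponential factor $b^{-2\epsilon d^n}$. The paper's interpolation trick buys a cleaner one-line estimate with no case analysis; your approach avoids introducing the auxiliary parameter $s$ at the cost of a slightly more delicate balance.
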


\begin{proof}
Fix $b,c \in (1,\infty)$.
Since $\underline{D}(b,c) \subset \overline{D}(b,c)$, it suffices to calculate the lower bound of $\dim_{\mathrm{H}}\underline{D}(b,c)$ and the upper bound of $\dim_{\mathrm{H}}\overline{D}(b,c)$.

The lower bound of $\dim_{\mathrm{H}}\underline{D}(b,c)$ follows from Proposition \ref{p.Hausdorff_dimension_E_s_n_t_n} by setting $s_n=t_n \coloneqq 2b^{c^n}$. That is,
        \begin{equation*}
                \dim_{\mathrm{H}}
                \underline{D}(b,c)
                \geq \liminf _{n\to \infty}
                \frac{\sum_{k=1}^n\log (2b^{c^k})}
                {\gamma\sum_{k=1}^{n+1}\log (2b^{c^k})-\log (2b^{c^{n+1}})}
                =\frac{1}{(\gamma-1)c+1}.
        \end{equation*}

For the upper bound of $\dim_{\mathrm{H}}\overline{D}(b,c)$, we construct a cover of $\overline{D}(b,c)$ as follows. Fix $d\in (1,c)$ and $s\in (1/d,1)$. For any $x\in \overline{D}(b,c)$, by Lemma \ref{l.Luczak}, we deduce that
        \begin{equation*}
                \Pi_{n+1}(x)\geq
                \max \{
                        (\Pi_{n}(x))^d,
                        b^{d^{n+1}}
                \}
                >
                (\Pi_{n}(x))^{sd}b^{(1-s)d^{n+1}}
        \end{equation*}
for infinitely many $n\in \mathbb N$. This implies that
        \begin{equation}\label{limsupLambda}
                \overline{D}(b,c)\subset
                \bigcap_{N=1}^{\infty}
                \bigcup_{n=N}^{\infty}
               \left\{
                        x\in \Lambda:
                        a_{n+1}(x)>
                (\Pi_{n}(x))^{sd-1}b^{(1-s)d^{n+1}}
                \right\}.
        \end{equation}

For any $n \in \mathbb N$ and $(i_1,\cdots,i_{n}) \in \mathbb N^n$, let
 \begin{equation}\label{e.def_J_'}
                J_n(i_1,\dots,i_{n}) \coloneqq
                \bigcup_{j>
                (i_1\cdots i_{n})^{sd-1}
                b^{(1-s)d^{n+1}} }
                I_{n+1}(i,\dots,i_{n},j).
        \end{equation}
Then the limsup set on the right-hand side of \eqref{limsupLambda} can be written as
 \begin{equation*}
                \bigcap_{N=1}^{\infty}
                \bigcup_{n=N}^{\infty}
                \bigcup_{
                        (i_1,\dots, i_{n})
                        \in \mathbb{N}^n
                        }
                         J_n(i_1,\dots,i_{n}).
        \end{equation*}
 Hence, for every $N\geq 1$, we have
 \begin{equation}\label{e.cover_'}
                \overline{D}(b,c)\subset
                \bigcup_{n=N}^{\infty}
                \bigcup_{
                        (i_1,\cdots, i_{n})
                        \in \mathbb{N}^n
                        }
                        J_n(i_1,\dots,i_{n}).
        \end{equation}
which provides the desired covering.

For any $n \in \mathbb N$ and $(i_1,\cdots, i_{n})\in \mathbb{N}^n$, the diameter of $J_n(i_1,\dots,i_{n})$ is estimated as follows.
Note that
$$
\sum_{j>(i_1\cdots i_{n})^{sd-1}b^{(1-s)d^{n+1}}}\frac{1}{j^{\gamma}} \leq   \frac{2^{\gamma-1}}{\gamma-1} \cdot \frac{1}{(i_1\cdots i_{n})
                ^{(sd-1)(\gamma-1)}b^{(\gamma-1)(1-s)d^{n+1}}}.
$$
It follows from \eqref{e.def_J_'} and Proposition \ref{p.cylinder} that
\begin{alignat}{2}
              |J_n(i_1,\dots,i_{n})|=&\sum_{j>
                (i_1\cdots i_{n})^{sd-1}
                b^{(1-s)d^{n+1}}}
                |I_{n+1}(i_1,\dots,i_{n},j)|\notag \\
                \leq& \frac{C^{n+1}}{(i_1\cdots i_{n})^{\gamma}}\cdot \sum_{j>
                (i_1\cdots i_{n})^{sd-1}
               b^{(1-s)d^{n+1}}}
                \frac{1}{j^{\gamma}}\notag \\
                =&\frac{2^{\gamma-1}C^{n+1}}
                {(\gamma-1)(i_1\cdots i_{n})^{sd(\gamma-1)+1}
                b^{(\gamma-1)(1-s)d^{n+1}}}.\label{e.J_estimation}
        \end{alignat}
For any $0<\epsilon<1$, let $t\coloneqq \frac{1+\epsilon}{(\gamma-1)sd+1}$.
From \eqref{e.cover_'} and \eqref{e.J_estimation}, we conclude that
 \begin{alignat}{2}
\mathcal{H}^{t} \left(\overline{D}(b,c)\right)
                \leq& \liminf_{n\to\infty}
                \sum_{n=N}^{\infty}
                \sum_{(i_1,\dots, i_{n})\in
                \mathbb{N}^n}
               |J_n(i_1,\dots,i_{n})|^t \notag\\
                \leq& \liminf_{n\to\infty}
                \sum_{n=N}^{\infty}
                \sum_{(i_1,\cdots, i_{n})\in
                \mathbb{N}^n}
                \left(\frac{2^{\gamma-1}}{\gamma-1}\right)^t
                \frac{C^{(n+1)t}}
                {(i_1\cdots i_{n})^{((\gamma-1)sd+1)t}
                b^{(\gamma-1)(1-s)td^{n+1}}} \notag\\
                \leq&\liminf_{n\to\infty}
                \sum_{n=N}^{\infty}
                 \left(\frac{2^{\gamma-1}C}{\gamma-1}\right)^t
                \frac{(J_{\epsilon}C^{t})^n}{b^{(\gamma-1)(1-s)td^{n+1}}}=0.
                &&\notag
        \end{alignat}
 This implies that
$$\dim_{\mathrm{H}}\overline{D}(b,c)\leq t\coloneqq \frac{1+\epsilon}{(\gamma-1)sd+1}.$$
Letting $\epsilon \to 0^+$, $s\to 1^-$, and $d\to c^-$, we obtain
$$\dim_{\mathrm{H}}\overline{D}(b,c)\leq \frac{1}{(\gamma-1)c+1}.$$
\end{proof}

As a consequence of Proposition \ref{p.Hausdorff_dimension_E_s_n_t_n} and Lemma \ref{l.Hausdorff_dimension_pi_n_x>=a^c^n}, we have the {\L}uczak-type result for MR maps.

\begin{cor}
For any $b,c \in (1,\infty)$,
$$
\dim_{\rm H}\left\{x\in \Lambda:a_{n}(x)\geq b^{c^n}\
\mathrm{for\ infinitely\ many}\ n\in \mathbb N\right\}=\frac{1}{(\gamma-1)c+1}
$$
and
$$
\dim_{\rm H}\left\{x\in \Lambda:a_{n}(x)\geq b^{c^n}\
\mathrm{for\ sufficiently\ large}\ n\in \mathbb N\right\}=\frac{1}{(\gamma-1)c+1}.
$$
\end{cor}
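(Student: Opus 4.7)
The plan is to establish the two Hausdorff-dimension formulas by sandwiching the ``digit'' sets $\overline{A}(b,c) \coloneqq \{x\in\Lambda : a_n(x) \geq b^{c^n} \text{ i.o.}\}$ and $\underline{A}(b,c) \coloneqq \{x\in\Lambda : a_n(x) \geq b^{c^n} \text{ for all large } n\}$ between an explicit subset furnished by Proposition \ref{p.Hausdorff_dimension_E_s_n_t_n} (for the lower bound) and the ``product'' sets $\underline{D}(b,c),\overline{D}(b,c)$ (for the upper bound), whose dimensions are given by Lemma \ref{l.Hausdorff_dimension_pi_n_x>=a^c^n}.

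For the upper bound, since every $a_k(x) \geq 1$, the pointwise inequality $a_n(x) \geq b^{c^n}$ trivially forces $\Pi_n(x) \geq a_n(x) \geq b^{c^n}$. Consequently $\overline{A}(b,c) \subset \overline{D}(b,c)$ and $\underline{A}(b,c) \subset \underline{D}(b,c)$, so Lemma \ref{l.Hausdorff_dimension_pi_n_x>=a^c^n} immediately yields
\[
\dim_{\mathrm{H}} \overline{A}(b,c) \leq \dim_{\mathrm{H}} \overline{D}(b,c) = \frac{1}{(\gamma-1)c+1},
\]
and the same for $\underline{A}(b,c)$.

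For the lower bound, I would set $s_n = t_n \coloneqq b^{c^n}$ (replacing finitely many initial terms by $2$ if necessary to satisfy $s_n,t_n\geq 2$; this clearly does not affect the Hausdorff dimension). Both hypotheses of Proposition \ref{p.Hausdorff_dimension_E_s_n_t_n} are immediate: $s_n/t_n \equiv 1$, and $\sum_{k=1}^n \log s_k = \log b \cdot \tfrac{c^{n+1}-c}{c-1}$ clearly grows super-linearly. By definition of $E(\{s_n\},\{t_n\})$ in \eqref{e.def_E_s_n_t_n}, every $x$ in this set satisfies $a_n(x) > b^{c^n}$ for all large $n$, so $E(\{s_n\},\{t_n\}) \subset \underline{A}(b,c) \subset \overline{A}(b,c)$. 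Applying the formula \eqref{e.Prop_dimension} and dividing numerator and denominator by $c^{n+1}\log b$ gives
\[
\dim_{\mathrm{H}} E(\{s_n\},\{t_n\}) = \liminf_{n\to\infty} \frac{\tfrac{c^{n+1}-c}{c-1}}{\gamma \tfrac{c^{n+2}-c}{c-1} - c^{n+1}} = \frac{1/(c-1)}{\tfrac{\gamma c - (c-1)}{c-1}} = \frac{1}{(\gamma-1)c+1}.
\]

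There is no serious obstacle here: the work was done in Lemma \ref{l.Hausdorff_dimension_pi_n_x>=a^c^n} (via the {\L}uczak-type argument using Lemma \ref{l.Luczak}), and the present corollary simply transfers those bounds from $\Pi_n$ to the single digit $a_n$. The only point to be careful about is verifying the integer-threshold hypothesis $s_n,t_n\geq 2$ in Proposition \ref{p.Hausdorff_dimension_E_s_n_t_n} when $b^{c}$ is close to $1$, which as noted can be handled by a harmless truncation of the first few terms.
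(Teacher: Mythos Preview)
Your proposal is correct and takes essentially the same approach as the paper, which simply states that the corollary is a consequence of Proposition \ref{p.Hausdorff_dimension_E_s_n_t_n} and Lemma \ref{l.Hausdorff_dimension_pi_n_x>=a^c^n}. The only cosmetic difference is that the paper (in the proof of Lemma \ref{l.Hausdorff_dimension_pi_n_x>=a^c^n}) takes $s_n=t_n=2b^{c^n}$, which automatically satisfies $s_n,t_n\geq 2$ and makes your truncation remark unnecessary.
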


We are ready to prove Theorem \ref{t.Upper_and_lower_fast_Lyapunov_spectrum} for the case  $0<\alpha \leq \infty$.

\subsection{Hausdorff dimension of $\overline{E}_{\psi}(\alpha)$}
For any $0<\alpha \leq \infty$, we will prove that
\begin{equation*}
\dim_{\rm H}\overline{E}_{\psi}(\alpha)=\frac{1}{(\gamma-1)b_\psi+1},
\end{equation*}
where
\begin{equation} \label{bproof}
   b_\psi\coloneqq \liminf_{n\to \infty}\sqrt[n]{\psi(n)}.
\end{equation}
The proof is divided into two parts: the upper and lower bounds of $\dim_{\rm H}\overline{E}_{\psi}(\alpha)$.

\subsubsection{Upper bound}
For any $x\in \overline{E}_{\psi}(\alpha)$, when $\alpha\in (0,\infty)$, we see that $\Pi_n(x)\geq e^{\frac{\alpha \psi(n)}{2}}$ for infinitely many $n\in \mathbb N$; when $\alpha=\infty$, we derive that $\Pi_n(x)\geq e^{\psi(n)}$ for infinitely many $n\in \mathbb N$. Hence,
 \begin{equation}\label{e.overline_E_cover}
        \overline{E}_{\psi}(\alpha)\subset
       \left\{x\in \Lambda:
        \Pi_{n}(x)\geq A^{\psi(n)}\
        \mathrm{for\ infinitely\ many}\ n\in \mathbb N\right\}
    \end{equation}
     for some $A>1$. This leads to the investigation of the Hausdorff dimension of the limsup set in \eqref{e.overline_E_cover}.

\begin{lemma}\label{l.Hausdorff_dimension_over_F_psi}
For any $A\in(1,\infty)$, define
     \begin{equation*}\label{e.def_over_F_psi}
        \overline{F}(\psi)\coloneqq
       \left\{
        x\in \Lambda:
        \Pi_{n}(x)\geq A^{\psi(n)}\
        \mathrm{for\ infinitely\ many}\ n\in \mathbb N
        \right\}.
     \end{equation*}
     Then
     \begin{equation*}
        \dim_{\mathrm{H}}\overline{F}(\psi)
        =\frac{1}{(\gamma-1)b_\psi+1}.
     \end{equation*}
\end{lemma}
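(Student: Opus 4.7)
The strategy is to sandwich $\overline{F}(\psi)$ between two sets of the form $\overline{D}(A,c)$ and $\underline{D}(A,c)$ whose Hausdorff dimensions are already computed in Lemma \ref{l.Hausdorff_dimension_pi_n_x>=a^c^n}, and then to let the parameter $c$ approach $b_\psi$ from the appropriate side. The key is to exploit the two elementary reformulations of $b_\psi=\liminf_{n\to\infty}\sqrt[n]{\psi(n)}$: (a) for every $c<b_\psi$, $\psi(n)\geq c^n$ for all sufficiently large $n$; (b) for every $c>b_\psi$, $\psi(n)\leq c^n$ for infinitely many $n$.

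\textbf{Upper bound.} Assume first $1<b_\psi<\infty$ and fix any $c\in(1,b_\psi)$. By (a), $\psi(n)\geq c^n$ for all large $n$, and since $A>1$ this gives $A^{\psi(n)}\geq A^{c^n}$ for all large $n$. Hence every $x\in\overline{F}(\psi)$ satisfies $\Pi_n(x)\geq A^{c^n}$ for infinitely many $n$, i.e.\ $\overline{F}(\psi)\subset\overline{D}(A,c)$. Lemma \ref{l.Hausdorff_dimension_pi_n_x>=a^c^n} then yields $\dim_{\mathrm H}\overline{F}(\psi)\leq 1/((\gamma-1)c+1)$; letting $c\to b_\psi^-$ gives the desired upper bound. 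When $b_\psi=\infty$, the same argument works for every $c\in(1,\infty)$ and letting $c\to\infty$ yields $\dim_{\mathrm H}\overline{F}(\psi)=0$. When $b_\psi=1$, the above is vacuous, but $\psi(n)/n\to\infty$ forces every $x\in\overline{F}(\psi)$ into $\Lambda_\infty$ (because $\sum_{k=1}^n\log a_k(x)\geq\psi(n)\log A$ infinitely often and $\psi(n)/n\to\infty$), so Proposition \ref{l.Hausdorff_dimension_limsup_log_pi/n} gives $\dim_{\mathrm H}\overline{F}(\psi)\leq 1/\gamma=1/((\gamma-1)b_\psi+1)$.

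\textbf{Lower bound.} If $b_\psi=\infty$ there is nothing to prove. Otherwise fix $\epsilon>0$ and set $c\coloneqq b_\psi+\epsilon>1$. By (b), there exists an increasing sequence $n_k\to\infty$ with $\psi(n_k)\leq c^{n_k}$ for every $k$. For any $x\in\underline{D}(A,c)$, we have $\Pi_n(x)\geq A^{c^n}$ for all sufficiently large $n$, so for all sufficiently large $k$,
\begin{equation*}
\Pi_{n_k}(x)\geq A^{c^{n_k}}\geq A^{\psi(n_k)},
\end{equation*}
where the second inequality uses $A>1$ and $\psi(n_k)\leq c^{n_k}$. Thus $\underline{D}(A,c)\subset\overline{F}(\psi)$, and Lemma \ref{l.Hausdorff_dimension_pi_n_x>=a^c^n} gives $\dim_{\mathrm H}\overline{F}(\psi)\geq 1/((\gamma-1)(b_\psi+\epsilon)+1)$. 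Letting $\epsilon\to 0^+$ completes the proof.

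The main point (not really an obstacle, but the conceptual heart) is recognizing that the liminf quantity $b_\psi$ behaves in exactly opposite ways for the two containments: its defining property (a) is used to enclose $\overline{F}(\psi)$ in a limsup-type set $\overline{D}(A,c)$ from above (with $c$ approaching $b_\psi$ from below), while (b) is used to realize $\overline{F}(\psi)$ from below via a liminf-type set $\underline{D}(A,c)$ (with $c$ approaching $b_\psi$ from above). The two one-sided estimates coincide in the limit thanks to the continuity of $c\mapsto 1/((\gamma-1)c+1)$, and the small boundary cases $b_\psi\in\{1,\infty\}$ are handled either by Proposition \ref{l.Hausdorff_dimension_limsup_log_pi/n} or by a trivial limit.
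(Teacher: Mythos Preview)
Your proof is correct and follows essentially the same approach as the paper: the paper also splits into the three cases $b_\psi=1$, $1<b_\psi<\infty$, and $b_\psi=\infty$, using the inclusions $\underline{D}(A,b_\psi+\epsilon)\subset\overline{F}(\psi)\subset\overline{D}(A,b_\psi-\epsilon)$ together with Lemma \ref{l.Hausdorff_dimension_pi_n_x>=a^c^n}, and handles the boundary case $b_\psi=1$ via $\overline{F}(\psi)\subset\Lambda_\infty$ and Proposition \ref{l.Hausdorff_dimension_limsup_log_pi/n}. Your only (cosmetic) difference is that you unify the lower bound for all finite $b_\psi$ into a single argument rather than treating $b_\psi=1$ separately.
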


\begin{proof}
The proof proceeds by considering the three cases: $b_\psi=1$, $1<b_\psi<\infty$, and $b_\psi=\infty$.

For the case $b_\psi=1$, on the one hand, since $\psi(n)/n\to \infty$ as $n\to \infty$, we get that $\overline{F}(\psi)$ is a subset of $\Lambda_{\infty}.$
   By Lemma \ref{l.Hausdorff_dimension_limsup_log_pi/n}, we see that
    \begin{equation*}
         \dim_{\mathrm{H}}\overline{F}(\psi)
         \leq  \dim_{\mathrm{H}} \Lambda_{\infty}
         =\frac{1}{\gamma}.
    \end{equation*}
On the other hand, for any $\epsilon>0$, by the definition of $b_\psi$ in \eqref{bproof}, we derive that $\psi(n)\leq (1+\epsilon)^n$
    for infinitely many $n\in \mathbb N$. This implies that
 \begin{equation*}
        \underline{D}(A,1+\epsilon)\subset \overline{F}(\psi).
    \end{equation*}
 It follows from Lemma \ref{l.Hausdorff_dimension_pi_n_x>=a^c^n}
    that $\dim_{\mathrm{H}}\overline{F}(\psi)\geq \frac{1}{(\gamma-1)(1+\epsilon)+1}$.
    Letting $\epsilon \to 0^{+}$, we obtain the desired lower bound.

 For the case $1<b_\psi<\infty$, let $0<\epsilon<b_\psi-1$. By the definition of $b_\psi$ in \eqref{bproof}, we see that $\psi(n)\leq (b_\psi+\epsilon)^{n}$ for infinitely many $n\in \mathbb N$ and that $\psi(n)\geq (b_\psi-\epsilon)^{n}$ for sufficiently large $n\in \mathbb N$. Hence,
\begin{equation*}
        \underline{D}(A,b_\psi+\epsilon)
        \subset
        \overline{F}(\psi)
        \subset
        \overline{D}(A,b_\psi-\epsilon).
    \end{equation*}
Applying Lemma \ref{l.Hausdorff_dimension_pi_n_x>=a^c^n}, we deduce that
    \begin{equation*}
        \frac{1}{(\gamma-1)(b_\psi+\epsilon)+1}\leq
        \dim_{\mathrm{H}}\overline{F}(\psi)
        \leq
        \frac{1}{(\gamma-1)(b_\psi-\epsilon)+1}.
    \end{equation*}
    Since $\epsilon$ is arbitrary, it follows that
    $$\dim_{\mathrm{H}}\overline{F}(\psi)=\frac{1}{(\gamma-1)b_\psi+1}.$$

  For the case $b_\psi=\infty$, let $K>1$. We have $\psi(n)>K^n$ for sufficiently large $n \in \mathbb N$,
    and so
    \begin{equation*}
        \overline{F}(\psi)\subset
        \overline{D}(A,K).
    \end{equation*}
 It follows from Lemma \ref{l.Hausdorff_dimension_pi_n_x>=a^c^n}
    that $$\dim_{\mathrm{H}}\overline{F}(\psi)\leq \frac{1}{K(\gamma-1)+1}.$$
   Taking the limit as $K\to \infty$, we obtain $\dim_{\mathrm{H}}\overline{F}(\psi)=0$.
    \end{proof}

   Combining \eqref{e.overline_E_cover} and Lemma \ref{l.Hausdorff_dimension_over_F_psi},
    we deduce that
    \begin{equation*}
   \dim_{\mathrm{H}}\overline{E}_{\psi}(\alpha)
         \leq \frac{1}{(\gamma-1)b_\psi+1},
    \end{equation*}
    and the estimation of the upper bound for $\dim_{\mathrm{H}}\overline{E}_{\psi}(\alpha)$ is completed.

\subsubsection{Lower bound}\label{constrction}
When $b_\psi=\infty$, it is clear that $\dim_{\mathrm{H}}\overline{E}_{\psi}(\alpha) \geq 0$ for all $0<\alpha \leq \infty$. When $1<b_\psi<\infty$, we will establish the lower bound of $\dim_{\rm H}\overline{E}_{\psi}(\alpha)$ in two cases: first for $0<\alpha<\infty$, then for $\alpha =\infty$.

In the following, we assume that $1<b_\psi<\infty$. When $0<\alpha<\infty$, we construct a subset $E(\{s_n\}, \{t_n\})$ of $\overline{E}_{\psi}(\alpha)$ and then use Proposition \ref{p.Hausdorff_dimension_E_s_n_t_n} to estimate the lower bound of $\dim_{\rm H}\overline{E}_{\psi}(\alpha)$. More precisely, for each $\epsilon>0$, we propose to construct an increasing function $g_{\psi}: \mathbb{N}\rightarrow \mathbb{R}_{>0}$ satisfying
    the following properties:
    \begin{enumerate}[(i).]
    \item \begin{equation*}\label{e.condition_i}
        \limsup_{n \to \infty}\frac{g_{\psi}(n)}{\psi (n)}=1;
    \end{equation*}
    \item \begin{equation*}\label{e.condition_ii}
        \lim_{n \to \infty}\frac{g_{\psi}(n)}{n}=\infty;
    \end{equation*}
    \item \begin{equation*}\label{e.condition_iii}
        \limsup_{n \to \infty}\frac{g_{\psi}(n+1)}{g_{\psi}(n)}
        \leq b_\psi+\epsilon.
    \end{equation*}
    \end{enumerate}
Let $s_1=t_1\coloneqq e^{\alpha g_{\psi}(1)}$, and for any $n \geq 2$, define $s_n=t_n \coloneqq e^{\alpha(g_{\psi}(n)-g_{\psi}(n-1))}$. We derive that $E(\{s_n\}, \{t_n\})$ is a subset of $\overline{E}_{\psi}(\alpha)$.
Note that
\[
\limsup_{n \to \infty} \frac{\log s_{n+1}}{\log s_1+\cdots +\log s_n} = \limsup_{n \to \infty}
         \frac{g_{\psi}(n+1)}{g_{\psi}(n)}-1 \leq b_\psi+\epsilon -1.
\]
It follows from Proposition \ref{p.Hausdorff_dimension_E_s_n_t_n} that
    \begin{alignat}{2}
        \dim_{\mathrm{H}}\overline{E}_{\psi}(\alpha)
         \geq&  \dim_{\mathrm{H}}E (\{s_n\},\{t_n\})\notag\\
         =& \liminf _{n\to \infty}
                \frac{\sum_{k=1}^n\log s_k}
                {\gamma\sum_{k=1}^{n+1}\log s_k-\log s_{n+1}}\notag\\
\geq& \frac{1}{(\gamma-1)(b_\psi+\epsilon)+1} . \label{lowalpha}
    \end{alignat}
Since $\epsilon>0$ is arbitrary, we obtain
    \begin{equation*}
        \dim_{\mathrm{H}}\overline{E}_{\psi}(\alpha)
         \geq  \frac{1}{(\gamma-1)b_\psi+1},
    \end{equation*}
which establishes the desired lower bound.

Next, we provide the precise construction of the function $g_\psi$. For for each $n,k \in \mathbb{N}$, let
\begin{equation}\label{e.def_c_n_k}
        c_{n,k}\coloneqq
        \begin{cases}
        \psi(k)(b_\psi+\epsilon)^{n-k},\quad &1\leq k\leq n; \\
        \psi(k), \quad &k\geq n+1,
        \end{cases}
    \end{equation}
and define
\begin{equation}\label{e.def_b_n}
        g_{\psi}(n)\coloneqq \inf_{k\geq 1}\{c_{n,k}\}=
        \inf\left\{\psi(1)(b_\psi+\epsilon)^{n-1},
        \dots,
        \psi(n-1)(b_\psi+\epsilon),
        \psi(n),
        \psi(n+1),
        \dots\right\}.
    \end{equation}
Since $\psi(n) \to \infty$ as $n\to \infty$, the infimum in \eqref{e.def_b_n} is achieved. The smallest index at which this infimum is achieved is denoted by $t_n$, namely
 \begin{equation}
        k_n\coloneqq
        \min \{k\geq 1: c_{n,k}=g_{\psi}(n)\}.
    \end{equation}
The following statements hold. For all $n,k \in \mathbb N$,
    \begin{enumerate}[(1).]
    \item $c_{n,k}\leq c_{n+1,k}\leq c_{n,k}(b_\psi+\epsilon)
    \ \mathrm{and}\ g_{\psi}(n)\leq g_{\psi}(n+1)\leq g_{\psi}(n)(b_\psi+\epsilon);$
    \item $k_{n+1}\geq k_n\ \mathrm{and}\ t_n\to \infty \ \mathrm{as}\  n \to \infty;$
    \item $g_{\psi}(n)\leq \psi(n)\  \mathrm{and}\ g_{\psi}(k_n)=\psi(k_n);$
    \item $g_{\psi}(n)/n\to \infty\ \mathrm{as}\ n \to \infty.$
    \end{enumerate}
Then these statements imply the properties of $g_{\psi}$. Precisely, items (2) and (3) imply Property (i); item (4) implies Property (ii); item (1) implies Property (iii).

In the following, we proceed with the proof of these statements, addressing each case individually.

For the item (1), given $n \in \mathbb N$, when $1\leq k\leq n$, we have
 \begin{equation*}
        \psi(k)(b_\psi+\epsilon)^{n-k}
        <\psi(k)(b_\psi+\epsilon)^{n+1-k}
       =\psi(k)(b_\psi+\epsilon)^{n-k} (b_\psi+\epsilon);
    \end{equation*}
   when $k\geq n+1$, we have
    \begin{equation*}
         \psi(k)
        = \psi(k)
         < \psi(k)(b_\psi+\epsilon).
    \end{equation*}
By the definition of $c_{n,k}$ in \eqref{e.def_c_n_k}, we derive that $c_{n,k}\leq c_{n+1,k}\leq c_{n,k}(b_\psi+\epsilon)$, and so their infimums satisfy the same relation:
    $g_{\psi}(n)\leq g_{\psi}(n+1)\leq g_{\psi}(n)(b_\psi+\epsilon)$.

 For the item (2), we first show the monotonicity of $\{k_n\}_{n\geq 1}$. Given $n\in \mathbb N$, when $k_n=1$, we have $k_{n+1}\geq 1= k_n$; when $2\leq k_n\leq n$, note that $i= k_n$ if and only if
    \begin{equation}\label{e.def_t_n}
        c_{n,i}<c_{n,k},\ \forall k<k_n
        \quad \mathrm{and} \quad
        c_{n, i}\leq c_{n,k},\ \forall k>k_n.
    \end{equation}
 For any $k<k_n$, we obtain
    \begin{equation*}
        \psi(k)(b_\psi+\epsilon)^{n-k}<\psi(k_n)(b_\psi+\epsilon)^{n-k_n},
    \end{equation*}
which implies that
    \begin{equation*}
        \psi(k)(b_\psi+\epsilon)^{n+1-k}
        <\psi(k_n)(b_\psi+\epsilon)^{n+1-k_n}.
    \end{equation*}
Hence, $k_{n+1}\geq k_n$; when $k_n\geq n+1$, for any $k<k_n$, we see that
    \begin{equation*}
        c_{n+1,k_n}=c_{n,k_n}< c_{n,k}\leq  c_{n+1,k}.
    \end{equation*}
It follows that $k_{n+1}\geq k_n$. Next, we use a proof by contradiction to show that $k_n\to \infty$ as $n \to \infty$. Suppose that $\lim_{n\to \infty}k_n\neq \infty$. Since $\{k_n\}_{n\geq 1}$ is increasing, there exists $N\in \mathbb N$ such that $k_n=N$ for sufficiently large $n \in \mathbb N$. Then for sufficiently large $n$ such that $n>N$, and
 \begin{equation*}
       \psi(n)=c_{n,n}\geq c_{n,k_n}=c_{n,N}
        =\psi(N)(b_\psi+\epsilon)^{n-N}.
    \end{equation*}
 It follows that
    \begin{equation*}\label{e.liminf_b}
    \frac{\log \psi(N)-N\log(b_\psi+\epsilon)}{n}
    +\log (b_\psi+\epsilon)\leq \frac{\log \psi(n)}{n}.
    \end{equation*}
Letting $n\to \infty$, we derive that
$$\liminf_{n\to \infty} \frac{\log \psi(n)}{n}\geq \log (b_\psi+\epsilon)>\log b_\psi,$$
which is a contradiction to the definition of $b_\psi$. Therefore, $k_n\to \infty \ \mathrm{as}\  n \to \infty$.

For the item (3), given $n\in \mathbb N$, by the definition of $g_\psi(n)$ in \eqref{e.def_b_n}, we see that $g_{\psi}(n)
    \leq c_{n,n}=\psi(n)$. Next, we show that $g_{\psi}(k_n)=\psi(k_n)$.
If $k_n<n$, then for all $1\leq k< k_n$,
    \begin{equation*}
        c_{t_n,k}
        =\psi(k)(b_\psi+\epsilon)^{n-k}(b+\epsilon)^{k_n-n}
        =c_{n,k}(b_\psi+\epsilon)^{k_n-n}
        >c_{n,k_n}(b_\psi+\epsilon)^{k_n-n}
        =c_{k_n,k_n};
    \end{equation*}
for all $k_n<k\leq n$,
    \begin{equation*}
        c_{k_n,k}= c_{n,k}(b_\psi+\epsilon)^{k-n}
        \geq c_{n,k_n}(b_\psi+\epsilon)^{k-n}
        = c_{k_n,k_n}(b_\psi+\epsilon)^{k-k_n}
        >c_{k_n,k_n};
    \end{equation*}
 and for all $k>n$,
    \begin{equation*}
        c_{k_n,k}= c_{n,k}
        \geq c_{n,k_n}
        = c_{k_n,k_n} (b_\psi+\epsilon)^{n-k_n}
        >c_{k_n,k_n}.
    \end{equation*}
  It follows from \eqref{e.def_t_n} that $k_{k_n}=k_n$ and $g_{\psi}(k_n)=\psi(k_n)$. If $k_n=n$, then
    \begin{equation*}
       g_{\psi}(k_n)=g_{\psi}(n)=c_{n,k_n}=c_{n,n}=\psi(n)=\psi(k_n).
    \end{equation*}
If $k_n>n$, then
    for all $1\leq k\leq n$,
    \begin{equation*}
        c_{k_n,k}= \psi(k)(b_\psi+\epsilon)^{k_n-k}
        \geq \psi(k)(b_\psi+\epsilon)^{n-k}
        =c_{n,k}
        >c_{n,k_n}
        =c_{k_n,k_n};
    \end{equation*}
    for all $n<k\leq k_n$,
    \begin{equation*}
        c_{k_n,k}= \psi(k)(b_\psi+\epsilon)^{k_n-k}
        \geq \psi(k)
        =c_{n,k}
        >c_{n,k_n}
        =c_{k_n,k_n};
    \end{equation*}
    and for all $k>k_n$,
    \begin{equation*}
        c_{t_n,k}=\psi(k)
        =c_{n,k}
        \geq c_{n,t_n}
        =c_{t_n,t_n}.
    \end{equation*}
    Thus $g_{\psi}(k_n)=\psi(k_n)$.

 For the item (4), since \begin{equation*}
        \lim_{n\to \infty}\frac{\psi(n)}{n}=\lim_{n\to \infty}\frac{(b_\psi+\epsilon)^n}{n}=\infty,
    \end{equation*}
    we have
    \begin{equation}\label{e.c_n_to_infty}
        \lim_{n\to \infty}\min_{1\leq k\leq n}\left\{
         \frac{\psi(k)(b_\psi+\epsilon)^{n-k}}{n}\right\}=\infty.
    \end{equation}
  Denote $c_n \coloneqq \min_{1\leq k\leq n}\{\frac{\psi(k)(b_\psi+\epsilon)^{n-k}}{n}\}$. It follows that
    \begin{equation*}
        \frac{g_{\psi}(n)}{n}=\inf\left\{
        c_n,
        \frac{\psi(n)}{n},
        \frac{\psi(n+1)}{n},
        \cdots\right\}
        \geq
        \inf\left\{
        c_n,
        \frac{\psi(n)}{n},
        \frac{\psi(n+1)}{n+1},
        \cdots\right\}.
    \end{equation*}
Combining this with \eqref{e.c_n_to_infty} and the hypothesis that $\psi(n)/\to \infty$ as $n \to \infty$, we derive that $g_{\psi}(n)/n\to \infty$ as $n \to \infty$.

At last, we deal with the lower bound of $\dim_{\rm H}\overline{E}_{\psi}(\alpha)$ for the case $\alpha =\infty$. For any $n \in \mathbb N$, define $\psi^*(n)\coloneqq n\psi(n)$. Then
\begin{equation*}
   b_{\psi^*}\coloneqq \liminf_{n\to \infty}\sqrt[n]{\psi^*(n)} = \liminf_{n\to \infty}\sqrt[n]{n\psi(n)}= \liminf_{n\to \infty}\sqrt[n]{\psi(n)} =b_\psi.
\end{equation*}
Note that $\overline{E}_{\psi^*}(1) \subset \overline{E}_{\psi}(\infty)$, it follows from \eqref{lowalpha} that
\[
\dim_{\rm H} \overline{E}_{\psi}(\infty) \geq \dim_{\rm H}\overline{E}_{\psi^*}(1) \geq \frac{1}{(\gamma-1)b_{\psi^*}+1} = \frac{1}{(\gamma-1)b_{\psi}+1}.
\]

\subsection{Hausdorff dimension of $\underline{E}_{\psi}(\alpha)$}\label{Eunderline}

For any $0<\alpha \leq \infty$, we will prove that
\begin{equation}\label{Bpsiequal}
\dim_{\rm H}\underline{E}_{\psi}(\alpha)=\frac{1}{(\gamma-1)B_\psi+1},
\end{equation}
where
\begin{equation*} \label{bproof}
   B_\psi\coloneqq \limsup_{n\to \infty}\sqrt[n]{\psi(n)}.
\end{equation*}

\subsubsection{Upper bound}
For any $x\in \underline{E}_{\psi}(\alpha)$, when $\alpha\in (0,\infty)$, we see that $\Pi_n(x)\geq e^{\frac{\alpha \psi(n)}{2}}$ for sufficiently large $n \in \mathbb N$; when $\alpha=\infty$, we see that $\Pi_n(x)\geq e^{\psi(n)}$ for sufficiently large $n \in \mathbb N$. Hence,
\begin{equation}\label{e.underline_E_cover}
\underline{E}_{\psi}(\alpha)\subset\left\{x\in \Lambda:\Pi_{n}(x)\geq A^{\psi(n)}\ \text{for sufficiently large $n \in \mathbb N$} \right\}
\end{equation}
for some $A>1$. This leads to the study of the Hausdorff dimension of the liminf set in \eqref{e.underline_E_cover}.

\begin{lemma}\label{l.Hausdorff_dimension_under_F_psi}
     For any $A\in(1,\infty)$, define
     \begin{equation*}\label{e.def_under_F_psi}
        \underline{F}(\psi)\coloneqq
        \left\{
        x\in \Lambda:
        \Pi_{n}(x)\geq A^{\psi(n)}
       \ \text{for sufficiently large $n \in \mathbb N$}
        \right\}.
     \end{equation*}
     Then
     \begin{equation*}
        \dim_{\mathrm{H}}\underline{F}(\psi)
        =\frac{1}{(\gamma-1)B_\psi+1}.
     \end{equation*}
\end{lemma}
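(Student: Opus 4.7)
The plan is to mirror the proof of Lemma \ref{l.Hausdorff_dimension_over_F_psi}, but to sandwich the liminf-type set $\underline{F}(\psi)$ between the sets $\underline{D}(A,\cdot)$ and $\overline{D}(A,\cdot)$ using the definition of $B_\psi=\limsup_{n}\sqrt[n]{\psi(n)}$. The key observation (which is why $B_\psi$ rather than $b_\psi$ appears here) is that $\psi(n)\leq(B_\psi+\epsilon)^n$ holds for \emph{all} sufficiently large $n$, while $\psi(n)\geq(B_\psi-\epsilon)^n$ holds for \emph{infinitely many} $n$.

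\textbf{Lower bound.} Fix $\epsilon>0$ with $B_\psi+\epsilon<\infty$. By the definition of $B_\psi$, we have $\psi(n)\leq(B_\psi+\epsilon)^n$ for all large $n$, hence $A^{\psi(n)}\leq A^{(B_\psi+\epsilon)^n}$ for all large $n$. This yields the inclusion
\[
\underline{D}(A,B_\psi+\epsilon)\subset \underline{F}(\psi),
\]
and Lemma \ref{l.Hausdorff_dimension_pi_n_x>=a^c^n} gives $\dim_{\mathrm H}\underline{F}(\psi)\geq \frac{1}{(\gamma-1)(B_\psi+\epsilon)+1}$. Letting $\epsilon\to 0^+$ yields the desired lower bound when $B_\psi<\infty$; the case $B_\psi=\infty$ is vacuous for the lower bound.

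\textbf{Upper bound.} Fix $0<\epsilon<B_\psi-1$ (when $B_\psi>1$). By the definition of $B_\psi$, there exists a subsequence $n_k\to\infty$ with $\psi(n_k)\geq(B_\psi-\epsilon)^{n_k}$. For any $x\in\underline{F}(\psi)$, we have $\Pi_{n}(x)\geq A^{\psi(n)}$ for all large $n$, so in particular $\Pi_{n_k}(x)\geq A^{(B_\psi-\epsilon)^{n_k}}$ for all sufficiently large $k$. Hence
\[
\underline{F}(\psi)\subset \overline{D}(A,B_\psi-\epsilon),
\]
and Lemma \ref{l.Hausdorff_dimension_pi_n_x>=a^c^n} gives $\dim_{\mathrm H}\underline{F}(\psi)\leq \frac{1}{(\gamma-1)(B_\psi-\epsilon)+1}$. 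Letting $\epsilon\to 0^+$ yields the desired upper bound when $1<B_\psi<\infty$.

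\textbf{Boundary cases.} When $B_\psi=1$ (note $B_\psi\geq 1$ since $\psi(n)/n\to\infty$), the hypothesis $\psi(n)/n\to\infty$ implies $\underline{F}(\psi)\subset\Lambda_\infty$, so Proposition \ref{l.Hausdorff_dimension_limsup_log_pi/n} yields $\dim_{\mathrm H}\underline{F}(\psi)\leq 1/\gamma$; the matching lower bound follows from the argument above with $B_\psi=1$. When $B_\psi=\infty$, for every $K>1$ there is a subsequence along which $\psi(n_k)\geq K^{n_k}$, so the same inclusion argument gives $\underline{F}(\psi)\subset\overline{D}(A,K)$ and hence $\dim_{\mathrm H}\underline{F}(\psi)\leq\frac{1}{(\gamma-1)K+1}$; letting $K\to\infty$ yields $\dim_{\mathrm H}\underline{F}(\psi)=0=\frac{1}{(\gamma-1)B_\psi+1}$.

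Given that both $\underline{D}$ and $\overline{D}$ dimensions are already pinned down in Lemma \ref{l.Hausdorff_dimension_pi_n_x>=a^c^n}, there is no real obstacle here — the only point requiring a bit of care is verifying the asymmetry of the two inclusions, which is precisely why $B_\psi$ (and not $b_\psi$) is the right quantity on both sides of the sandwich for the liminf set $\underline{F}(\psi)$.
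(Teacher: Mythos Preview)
Your proof is correct and follows essentially the same approach as the paper's: sandwich $\underline{F}(\psi)$ between $\underline{D}(A,B_\psi+\epsilon)$ and $\overline{D}(A,B_\psi-\epsilon)$, apply Lemma~\ref{l.Hausdorff_dimension_pi_n_x>=a^c^n}, and treat the boundary cases $B_\psi=1$ (via $\Lambda_\infty$) and $B_\psi=\infty$ separately. The only cosmetic difference is that the paper organizes the argument by the three cases for $B_\psi$ first, whereas you separate lower and upper bounds before handling the endpoints.
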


\begin{proof}
The proof is very similar to that of Lemma \ref{l.Hausdorff_dimension_over_F_psi}.

  For the case $B_\psi=1$, since $\psi(n)/n\to \infty$ as $n\to \infty$,
    we get that $\underline{F}(\psi)$ is a subset of $\Lambda_{\infty}.$
    By Lemma \ref{l.Hausdorff_dimension_limsup_log_pi/n},
    \begin{equation*}
         \dim_{\mathrm{H}}\underline{F}(\psi)
         \leq  \dim_{\mathrm{H}} \Pi_{\infty}
         =\frac{1}{\gamma}.
    \end{equation*}
  For any $\epsilon>0$, by definition of $B_\psi$, we obtain $\psi(n)\leq (1+\epsilon)^n$ for sufficiently large $n \in \mathbb N$. Hence,
 \begin{equation*}
        \underline{D}(A,1+\epsilon)\subset \underline{F}(\psi).
    \end{equation*}
 It follows from Lemma \ref{l.Hausdorff_dimension_pi_n_x>=a^c^n} that $$\dim_{\mathrm{H}}\underline{F}(\psi)\geq \frac{1}{(\gamma-1)(1+\epsilon)+1}.$$ Letting $\epsilon \to 0^{+}$, we get the desired lower bound.

 For the case $1<B_\psi<\infty$, let $0<\epsilon<B_\psi-1$. By definition of $B_\psi$, we derive that $\psi(n)\leq (B_\psi+\epsilon)^{n}$ for sufficiently large $n \in \mathbb N$ and that $\psi(n)\geq (B_\psi-\epsilon)^{n}$ for infinitely many $n \in \mathbb N$. Thus,
 \begin{equation*}
        \underline{D}(A,B_\psi+\epsilon)
        \subset
        \underline{F}(\psi)
        \subset
        \overline{D}(A,B_\psi-\epsilon).
    \end{equation*}
Applying Lemma \ref{l.Hausdorff_dimension_pi_n_x>=a^c^n}, we have
    \begin{equation*}
        \frac{1}{(\gamma-1)(B_\psi+\epsilon)+1}\leq
        \dim_{\mathrm{H}}\underline{F}(\psi)
        \leq
        \frac{1}{(\gamma-1)(B_\psi-\epsilon)+1}.
    \end{equation*}
    Since $\epsilon>0$ is arbitrary, we obtain
    $$\dim_{\mathrm{H}}\underline{F}(\psi)=\frac{1}{(\gamma-1)B_\psi+1}.$$

For the case $B_\psi=\infty$, given $K>1$, we have $\psi(n)>K^n$ for infinitely many $n \in \mathbb N$, and so
  \begin{equation*}
        \underline{F}(\psi)\subset
        \overline{D}(A,K).
    \end{equation*}
We deduce from Lemma \ref{l.Hausdorff_dimension_pi_n_x>=a^c^n} that $$\dim_{\mathrm{H}}\underline{F}(\psi)\leq \frac{1}{(\gamma-1)K+1}.$$
Letting $C\to \infty$, we obtain $\dim_{\mathrm{H}}\underline{F}(\psi)=0$.
\end{proof}

 Combining \eqref{e.underline_E_cover} and Lemma \ref{l.Hausdorff_dimension_under_F_psi}, we conclude that
    \begin{equation*}
         \dim_{\mathrm{H}}\underline{E}_{\psi}(\alpha)
         \leq \frac{1}{(\gamma-1)B_\psi+1} .
    \end{equation*}

\subsubsection{Lower bound}\label{constrction'}
When $0<\alpha <\infty$, for any $\epsilon>0$, Fang, Shang and Wu in \cite{FSW} constructed a sequence $\{d_n\}_{n\geq 1}$ by $d_1\coloneqq T_1$ and $d_n\coloneqq T_n/T_{n-1}$ for $n \geq 2$, where $T_n\coloneqq \sup_{k\geq 1}\{d_{n,k}\}$ and $d_{n,k}$ is defined as
\begin{equation*}
        d_{n,k}\coloneqq
        \begin{cases}
        e^{\alpha\psi(k)},\quad &1\leq k\leq n; \\
        e^{\alpha\psi(k)(B_\psi+\epsilon)^{n-k}}, \quad &k\geq n+1.
        \end{cases}
    \end{equation*}
 They proved that $\{d_n\}_{n\geq 1}$ satisfies
    \begin{enumerate}[(i)]
    \item \begin{equation*}\label{e.condition_i'}
        \liminf_{n \to \infty}\frac{\sum^n_{k=1}\log c_k}{\psi (n)}=\alpha;
    \end{equation*}
    \item \begin{equation*}\label{e.condition_ii'}
        \lim_{n \to \infty}\frac{\sum^n_{k=1}\log c_k}{n}=\infty;
    \end{equation*}
    \item \begin{equation*}\label{e.condition_iii'}
        \limsup_{n \to \infty}\frac{\log c_{n+1}}{\sum^n_{k=1}\log c_k}
        \leq B_\psi+\epsilon-1.
    \end{equation*}
    \end{enumerate}
See \cite[p.\,828--829]{FSW} for a detailed proof. For any $n\in \mathbb N$, let $s_n=t_n\coloneqq 2c_n$. Then $E(\{s_n\},\{t_n\})$ is a subset of $\underline{E}_\psi(\alpha)$. By Proposition \ref{p.Hausdorff_dimension_E_s_n_t_n}, we derive that
\begin{align*}
        \dim_{\mathrm{H}}\underline{E}_{\psi}(\alpha)
         \geq&  \dim_{\mathrm{H}}E(\{s_n\},\{t_n\})\\
         =& \liminf _{n\to \infty}
                \frac{\sum_{k=1}^n\log c_k}
                {\gamma\sum_{k=1}^{n+1}\log c_k-\log c_{n+1}}\\
         =& \left(\gamma+(\gamma-1)\cdot\limsup_{n \to \infty}
         \frac{\log c_{n+1}}{\sum^n_{k=1}\log c_k}\right)^{-1}\\
         \geq& \frac{1}{(\gamma-1)(B_\psi+\epsilon)+1}.
    \end{align*}
   Since $\epsilon>0$ is arbitrary, we have
    \begin{equation*}
        \dim_{\mathrm{H}}\underline{E}_{\psi}(\alpha)
         \geq  \frac{1}{(\gamma-1)B_\psi+1}.
    \end{equation*}

When $\alpha =\infty$, for any $n \in \mathbb N$, define $\psi^*(n)\coloneqq n\psi(n)$. Then
\begin{equation*}
   B_{\psi^*}\coloneqq \limsup_{n\to \infty}\sqrt[n]{\psi^*(n)} =  \limsup_{n\to \infty}\sqrt[n]{\psi(n)} =B_\psi.
\end{equation*}
Since $\underline{E}_{\psi^*}(1) \subset \underline{E}_{\psi}(\infty)$, we deduce from \eqref{lowalpha} that
\[
\dim_{\rm H} \underline{E}_{\psi}(\infty) \geq \dim_{\rm H}\underline{E}_{\psi^*}(1) \geq \frac{1}{(\gamma-1)B_{\psi^*}+1} = \frac{1}{(\gamma-1)B_{\psi}+1}.
\]

\section{Appendix}\label{s.appendix}
This appendix presents a detailed proof for Theorem \ref{thm-1}. Throughout the proof, we construct a non-decreasing function $g_{\psi}:\mathbb{N}\to\mathbb{R}_{>0}$ that exhibits properties stronger than those outlined in Properties (i), (ii), and (iii) in Subsection \ref{constrction}. Although $g_{\psi}$ is not used in the proofs of our main theorems, we believe the construction of $g_{\psi}$ is of independent interest. This has prompted us to provide a thorough proof.

\begin{theorem}\label{thm-1}
Suppose that a function $\psi : \mathbb{N} \to \mathbb{R}_{>0}$ satisfies the following two hypotheses
\begin{equation*}\label{equ-1}
(A). \lim_{n\to \infty} \frac{\psi(n)}{n} = \infty, \quad (B). \liminf_{n\to \infty} \sqrt[n]{\psi(n)} = b_{\psi} \in [1, \infty],
\end{equation*}
then there exists a non-decreasing function $g_\psi : \mathbb{N} \to \mathbb{R}_{>0}$ satisfying the following four properties:
\begin{equation*}\label{equ-2}
(a). g_\psi(n) \leq \psi(n);~~(b). \limsup_{n\to \infty} \frac{g_\psi(n)}{\psi(n)} = 1;~~ (c). \lim_{n\to \infty} \frac{g_\psi(n)}{n} = \infty;~~(d). \lim_{n\to \infty} \frac{g_\psi(n+1)}{g_\psi(n)} = b_{\psi}.
\end{equation*}
\end{theorem}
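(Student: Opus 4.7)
The plan is to refine the construction from Subsection \ref{constrction} by replacing the fixed parameter $b_\psi+\epsilon$ with a variable sequence $r_n$ that tends to $b_\psi$ (and, in the case $b_\psi=\infty$, diverges to $+\infty$ sufficiently slowly). Concretely, choose any sequence with $r_n\downarrow b_\psi$ (resp.\ $r_n\to\infty$) and define
\[
c_{n,k}:=\begin{cases}\psi(k)\prod_{j=k}^{n-1}r_j, & 1\leq k\leq n,\\ \psi(k), & k\geq n+1,\end{cases}
\qquad g_\psi(n):=\inf_{k\geq 1}c_{n,k}.
\]
Since $\psi(k)/k\to\infty$ forces $c_{n,k}\to\infty$ as $k\to\infty$, the infimum is attained at some smallest index $k_n$.

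The first step is to transfer items (1)--(4) of Subsection \ref{constrction} to this modified $g_\psi$, now using $r_j\geq 1$ instead of $b_\psi+\epsilon\geq 1$: monotonicity of $g_\psi$ (from $c_{n+1,k}\geq c_{n,k}$), monotonicity of $k_n$ with $k_n\to\infty$, the bound $g_\psi(n)\leq\psi(n)=c_{n,n}$ together with the equality $g_\psi(k_n)=\psi(k_n)$, and the divergence $g_\psi(n)/n\to\infty$. The arguments of Subsection \ref{constrction} carry over essentially verbatim once one observes that the crucial input there is $\liminf\sqrt[n]{\psi(n)}=b_\psi$, which is the hypothesis here as well. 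This immediately yields properties (a), (b) and (c).

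The main obstacle is property (d): $\lim_n g_\psi(n+1)/g_\psi(n)=b_\psi$. The upper bound is straightforward, since $g_\psi(n+1)\leq c_{n+1,k_n}=r_n\,g_\psi(n)$ and $r_n\to b_\psi$. The lower bound requires a case analysis on $k_{n+1}$. When $k_{n+1}\leq n$, the optimality of $g_\psi(n)$ gives $c_{n,k_{n+1}}\geq g_\psi(n)$, hence
\[
g_\psi(n+1)=c_{n+1,k_{n+1}}=r_n\,c_{n,k_{n+1}}\geq r_n\,g_\psi(n),
\]
producing the desired rate. The delicate situation is the ``resetting'' case $k_{n+1}\geq n+1$: here $g_\psi(n+1)=\psi(k_{n+1})$ and only $g_\psi(n+1)\geq g_\psi(n)$ is automatic, so the ratio could a priori collapse to $1$.

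The hard part is thus to choose $r_n$ so that the reset events are compatible with the limit. The plan is to take $r_n=b_\psi(1+\delta_n)$ with $\delta_n\downarrow 0$ slowly enough that one of the following alternatives holds along the (possibly sparse) subsequence of reset indices: (i) eventually $k_{n+1}\leq n$, so the analysis above applies; or (ii) when $k_{n+1}\geq n+1$ occurs, the very fact that $\psi(k_{n+1})$ beats all the past values $r_n c_{n,k}$ forces $\psi(k_{n+1})/k_{n+1}$ close to $b_\psi^{\,k_{n+1}}$, and combined with $g_\psi(n)\leq c_{n,k_{n+1}}=\psi(k_{n+1})$ this pins the ratio to $r_n(1+o(1))\to b_\psi$. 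The case $b_\psi=\infty$ is parallel, with $r_n$ chosen to grow slowly (e.g.\ $r_n:=\min\{n,\sqrt[n]{\psi(n)}\}$), and the same case analysis then yields $g_\psi(n+1)/g_\psi(n)\to\infty$.
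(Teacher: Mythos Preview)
There is a genuine gap. The infimum construction $g_\psi(n)=\inf_k c_{n,k}$ can stay \emph{constant} whenever the minimising index $k_n$ lies strictly to the right of $n$: if $k_n\geq n+1$ then $c_{n+1,k_n}=\psi(k_n)=c_{n,k_n}=g_\psi(n)$, so $g_\psi(n+1)\leq g_\psi(n)$ and hence $g_\psi(n+1)=g_\psi(n)$. If this occurs for infinitely many $n$ and $b_\psi>1$, property (d) fails outright, and no choice of $r_n\geq b_\psi$ can prevent it. For a concrete obstruction take $b_\psi=2$, $m_i:=2^i$, and set $\psi(n):=2^n$ for $n\notin\{m_i\}$ while $\psi(m_i):=2^{\,m_i-10i}$; then $\psi(n)/n\to\infty$ and $\liminf_n\psi(n)^{1/n}=2$. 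Using only $r_l\geq 2$ one checks that for every large $i$ the minimum of $c_{m_i-1,\,\cdot}$ is attained at $k=m_i$: each earlier dip satisfies $c_{m_i-1,m_j}\geq 2^{m_j-10j}\cdot 2^{m_i-1-m_j}=2^{\,m_i-1-10j}\geq 2^{9}\cdot 2^{\,m_i-10i}$, and non-dip indices give still larger values. Hence $k_{m_i-1}=m_i$ and $g_\psi(m_i)=g_\psi(m_i-1)=2^{\,m_i-10i}$, so the ratio equals $1$ along an infinite subsequence. Your alternative (ii) does not rescue this: the inequality $g_\psi(n)\leq c_{n,k_{n+1}}=\psi(k_{n+1})=g_\psi(n+1)$ gives only ratio $\geq 1$, and the informal claim that $\psi(k_{n+1})$ is forced close to $b_\psi^{\,k_{n+1}}$ would at best produce an \emph{upper} bound on $g_\psi(n+1)$, not the lower bound you need.

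The paper abandons the infimum construction altogether. It first proves (Lemmas \ref{cor-1}--\ref{cor-2}, via an L-index/P-index argument applied to suitable renormalisations of $\psi$) that one can select anchor indices $n_j\to\infty$ at which $\psi(n_j)$ is simultaneously small compared to the \emph{past} on scale $(b_\psi+\epsilon_j)^{\,\cdot}$ and small compared to the \emph{future} on scale $(b_\psi-\epsilon_j)^{\,\cdot}$, with $\epsilon_j\downarrow 0$. Between consecutive anchors $g_\psi$ is then defined by explicit interpolation: first the exponential $f_j$ with base $b_\psi-\epsilon_j$ out of $\psi(n_j)$, then the exponential $h_j$ with base $b_\psi+\epsilon_{j+1}$ into $\psi(n_{j+1})$ (with a linear piece replacing $f_j$ when $b_\psi=1$), taking $g_\psi=\max\{f_j,h_j\}$. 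This forces every ratio $g_\psi(n+1)/g_\psi(n)$ into the window $[b_\psi-\epsilon_j,\,b_\psi+\epsilon_{j+1}]$ and yields (d) directly. The two-sided anchoring at each $n_j$ is precisely what the infimum approach lacks.
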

Depending on the different choices of $b$ of Hypothesis $(B)$ in \eqref{equ-1}, we will split the proof of Theorem \ref{thm-1} into the following three propositions, namely Propositions \ref{thm-2}, \ref{thm-3} and \ref{thm-4}. Each of these propositions corresponds to the cases $b_{\psi}=\infty$, $1<b_{\psi}<\infty$ and $b_{\psi}=1$ respectively.
\begin{prop}[$b_{\psi}=\infty$]\label{thm-2}
Suppose that a function $\psi: \mathbb{N} \to \mathbb{R}_{>0}$ satisfies $\lim\limits_{n\to\infty}\psi(n)^{1/n}=\infty$, then there is a non-decreasing function $g_\psi: \mathbb{N} \to \mathbb{R}_{>0}$, satisfying
\begin{equation*}\label{equ-3}
(a). g_\psi(n) \leq \psi(n); \quad (b). \limsup_{n\to \infty}\frac{g_\psi(n)}{\psi(n)}=1; \quad (c). \lim_{n\to \infty} \frac{g_\psi(n+1)}{g_\psi(n)}=\infty.
\end{equation*}
\end{prop}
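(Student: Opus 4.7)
The plan is to construct $g_{\psi}$ as a piecewise-geometric function that matches $\psi$ exactly at a carefully chosen sparse subsequence $n_1 < n_2 < \cdots$. On each ``block'' $[n_k, n_{k+1})$ I will set $g_{\psi}(n) := \psi(n_k)\cdot k^{n-n_k}$, so that within the $k$-th block the ratio $g_{\psi}(n+1)/g_{\psi}(n)$ equals $k$ identically; as $k \to \infty$ across blocks, property (c) will follow automatically.

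The key technical ingredient is the following lemma: for every $M \geq 1$, the set
\[
U_M := \{n \in \mathbb{N} : \psi(m) \geq M^{m-n}\psi(n) \text{ for every } m \geq n\}
\]
is unbounded. I would prove this by contradiction: if $U_M$ were bounded above by some $N$, then starting from any $n_0 > N$ one could iteratively extract $n_0 < n_1 < n_2 < \cdots$ with $\psi(n_{j+1}) < M^{n_{j+1}-n_j}\psi(n_j)$; telescoping yields $\psi(n_j) < \psi(n_0) M^{n_j - n_0}$, hence $\limsup_{j} \psi(n_j)^{1/n_j} \leq M$, contradicting the hypothesis $\lim_n \psi(n)^{1/n} = \infty$. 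A crucial byproduct of the definition is that every $n \in U_M$ is automatically a \emph{minimum point} of $\psi$ (since $\psi(m) \geq \psi(n)$ for all $m \geq n$), which is precisely what allows property (b) to be achieved exactly rather than merely in the limit.

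Given the lemma, I would pick $n_k \in U_k$ inductively with $n_k > n_{k-1}$, define $g_{\psi}$ on each block as above, and set $g_{\psi}(n) := \min_{n \leq m \leq n_1}\psi(m)$ for $n < n_1$ to handle the initial segment (non-decreasing and bounded by $\psi$ as an infimum over nested sets). Verification is then direct: for (a), $g_{\psi}(n_k + j) = \psi(n_k) k^j \leq \psi(n_k+j)$ by the defining property of $U_k$; for (b), $g_{\psi}(n_k) = \psi(n_k)$ gives $\limsup g_{\psi}/\psi \geq 1$, while (a) forces the reverse inequality; for (c), within block $k$ all ratios equal $k$, and at the transition from block $k$ to block $k+1$ the ratio equals $\psi(n_{k+1})/(\psi(n_k)k^{n_{k+1}-n_k-1}) \geq k$ (again by $n_k \in U_k$ applied to $m=n_{k+1}$), so every ratio beyond $n_k$ is bounded below by $k$ and thus tends to infinity. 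Monotonicity follows from the same inequality at each transition.

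The main obstacle is the key lemma itself: one must upgrade the purely asymptotic hypothesis $\psi(n)^{1/n} \to \infty$ to an infinite supply of basepoints $n$ from which the future-rate estimate $\psi(m) \geq M^{m-n}\psi(n)$ holds \emph{uniformly} in $m \geq n$, not merely on average or along some convenient subsequence of $m$. Once this uniform control is secured, the observation that every element of $U_M$ is automatically a minimum point of $\psi$ bundles together the monotonicity constraint and the exact-matching requirement, and the remaining block construction becomes essentially mechanical.
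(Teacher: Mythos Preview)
Your proof is correct and shares the same architecture as the paper's: choose a sparse subsequence $n_1 < n_2 < \cdots$ at which $g_\psi$ matches $\psi$ exactly, interpolate geometrically on each block $[n_k, n_{k+1})$, and handle the initial segment by successive minima. The difference is in how the basepoints and block ratios are selected. The paper takes the $n_k$ to be the \emph{L-indices} of the sequence $\{\psi(n)^{1/n}\}$ (indices at which this sequence attains a forward minimum) and uses the data-driven ratio $\psi(n_k)^{1/n_k}$ on block $k$; the defining inequality $\psi(n)^{1/n}\geq \psi(n_k)^{1/n_k}$ for $n\geq n_k$ is precisely what yields $g_\psi(n)=\psi(n_k)^{n/n_k}\leq \psi(n)$. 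You instead prove directly that each $U_M$ is unbounded, pick $n_k\in U_k$, and use the artificial ratio $k$ on block $k$. Both routes deliver the same three properties; the paper's choice is more intrinsic (the ratios come from $\psi$ and are automatically increasing along the canonical L-sequence), while yours is more elementary in that it bypasses the L-index apparatus and replaces it with a short telescoping contradiction. Note that any L-index $n_k$ of $\{\psi(n)^{1/n}\}$ actually lies in $U_{\psi(n_k)^{1/n_k}}$, so your lemma is a mild generalization of what the paper implicitly uses.
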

\begin{prop}[$1<b_{\psi}<\infty$]\label{thm-3}
Suppose that a function $\psi: \mathbb{N} \to \mathbb{R}_{>0}$ satisfies $\liminf\limits_{n\to\infty}\psi(n)^{1/n}=b_{\psi}\in (1,\infty)$, then there is a non-decreasing function $g_\psi: \mathbb{N} \to \mathbb{R}_{>0}$ satisfying
\begin{equation*}\label{equ-4}
(a). g_\psi(n) \leq \psi(n); \quad (b). \limsup_{n\to \infty} \frac{g_\psi(n)}{\psi(n)} = 1; \quad (c). \lim_{n\to \infty} \frac{g_\psi(n+1)}{g_\psi(n)} = b_{\psi}.
\end{equation*}
\end{prop}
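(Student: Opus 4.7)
My strategy is to sharpen the construction carried out in Subsection \ref{constrction}, where a fixed slack $\epsilon > 0$ was used to produce a function with $\limsup g_{\psi}(n+1)/g_{\psi}(n) \leq b_\psi + \epsilon$. To upgrade the conclusion to the genuine limit $\lim_{n\to\infty} g_{\psi}(n+1)/g_{\psi}(n) = b_\psi$ demanded by (c), the natural move is to replace the constant $b_\psi + \epsilon$ with a sequence $\rho_n \searrow b_\psi$ whose rate of convergence is tuned to the oscillation of $\psi$. Concretely, after fixing $\rho_1 \geq \rho_2 \geq \cdots > b_\psi$ with $\rho_n \to b_\psi$, I would set
\[
c_{n,k} := \begin{cases} \psi(k)\prod_{j=k}^{n-1}\rho_j, & 1 \leq k \leq n,\\ \psi(k), & k \geq n+1,\end{cases}
\qquad g_\psi(n) := \inf_{k \geq 1} c_{n,k}.
\]

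The first step is to redo the four structural items (1)--(4) of Subsection \ref{constrction} in this variable-$\rho_n$ setting. The identities $c_{n+1,k}/c_{n,k} = \rho_n$ for $k \leq n$ and $c_{n+1,k} = c_{n,k}$ for $k > n$ readily give $g_\psi(n) \leq g_\psi(n+1) \leq \rho_n g_\psi(n)$, together with the monotonicity of the smallest minimizer $k_n$ of $c_{n,\cdot}$ and the identity $g_\psi(k_n) = \psi(k_n)$. The key fact $k_n \to \infty$ is extracted, as in the original argument, by contradicting $\liminf_{n\to\infty}\psi(n)^{1/n} = b_\psi$ using that $\rho_n > b_\psi$ for every $n$. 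Properties (a) and (b) of the proposition then follow at once: (a) from $g_\psi(n) \leq c_{n,n} = \psi(n)$, and (b) from $g_\psi(k_n)/\psi(k_n) = 1$ along the subsequence $k_n \to \infty$.

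The upper bound $\limsup g_\psi(n+1)/g_\psi(n) \leq b_\psi$ in (c) is immediate from $g_\psi(n+1) \leq \rho_n g_\psi(n)$ and $\rho_n \to b_\psi$. The main obstacle is the matching lower bound $\liminf g_\psi(n+1)/g_\psi(n) \geq b_\psi$. A direct case analysis shows that whenever $k_{n+1} \leq n$ one actually has $g_\psi(n+1)/g_\psi(n) = \rho_n$ exactly, since $c_{n,k_{n+1}} \geq g_\psi(n)$ forces the inequality $g_\psi(n+1) \geq \rho_n g_\psi(n)$ to meet the upper bound. So any drop below $\rho_n$ can occur only at ``boundary jump'' times when $k_{n+1} \geq n+1$, at which point $g_\psi(n+1)$ is given directly by $\psi(k_{n+1})$. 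Controlling these jumps is where the calibration of $\{\rho_n\}$ enters: the plan is to choose $\rho_n - b_\psi$ decaying slowly enough, relative to the oscillation of $\psi(n)^{1/n}$ around $b_\psi$, so that at each jump the ratio $\psi(k_{n+1})/g_\psi(n)$ remains close to $b_\psi$. Verifying that a single choice of $\{\rho_n\}$ achieves (a), (b), and (c) simultaneously for the given $\psi$ is the principal technical difficulty of the proof.
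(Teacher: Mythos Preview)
Your infimum construction cannot deliver the lower bound in (c), and no calibration of $\{\rho_n\}$ rescues it. The decisive observation is sharper than the one you record: whenever the (smallest) minimiser $k_n$ of $c_{n,\cdot}$ satisfies $k_n\geq n+1$, one has $c_{n+1,k_n}=\psi(k_n)=c_{n,k_n}=g_\psi(n)$, hence $g_\psi(n+1)\leq g_\psi(n)$ and therefore $g_\psi(n+1)/g_\psi(n)=1$ exactly. So the question is whether $k_n\geq n+1$ can be forced infinitely often regardless of $\rho$. It can. Fix any $c>b_\psi$, set $n_j:=j!$ and define $\psi(n):=b_\psi^n$ for $n\notin\{n_j\}$ while $\psi(n_j):=b_\psi^{n_j}/c^{\,j}$. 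Then $\psi(n_j)^{1/n_j}=b_\psi\,c^{-j/j!}\to b_\psi$, so the hypothesis $\liminf\psi(n)^{1/n}=b_\psi$ holds. For every $k\leq n_j-1$ and every choice $\rho_l>b_\psi$ one has
\[
c_{\,n_j-1,\,k}=\psi(k)\prod_{l=k}^{n_j-2}\rho_l\;>\;\psi(k)\,b_\psi^{\,n_j-1-k}\;\geq\;\frac{b_\psi^{\,n_j-1}}{c^{\,j-1}}\;>\;\frac{b_\psi^{\,n_j}}{c^{\,j}}=\psi(n_j)=c_{\,n_j-1,\,n_j},
\]
the last inequality being exactly $c>b_\psi$. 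Thus $k_{n_j-1}=n_j\geq (n_j-1)+1$ for all large $j$, giving $g_\psi(n_j)/g_\psi(n_j-1)=1$ infinitely often and $\liminf_{n}g_\psi(n+1)/g_\psi(n)=1<b_\psi$. The obstruction is structural: the infimum ``sees'' a forthcoming dip of $\psi$ and plateaus before reaching it, and this cannot be prevented by tuning multipliers that only act on the past indices $k\leq n$.

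The paper's argument avoids this by abandoning the infimum construction altogether. Using the key Lemma \ref{cor-1} (proved via an L-index/P-index argument, Lemma \ref{lem-3}), it first extracts a subsequence $n_1<n_2<\cdots$ of indices at which $\psi$ is simultaneously forward-minimal for $\psi(n)/(b_\psi-\epsilon/j)^n$ and backward-minimal for $\psi(n)/(b_\psi+\epsilon/j)^n$; it then \emph{defines} $g_\psi$ on each block $[n_j,n_{j+1}]$ as $\max\{f_j,h_j\}$, where $f_j(n)=(b_\psi-\epsilon/j)^{\,n-n_j}\psi(n_j)$ grows forward from $\psi(n_j)$ and $h_j(n)=(b_\psi+\epsilon/(j+1))^{\,n-n_{j+1}}\psi(n_{j+1})$ grows backward from $\psi(n_{j+1})$. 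This piecewise-exponential interpolation gives a two-sided bound $b_\psi-\epsilon/j\leq g_\psi(n+1)/g_\psi(n)\leq b_\psi+\epsilon/(j+1)$ on each block, from which $(c)$ follows directly. The essential new ingredient that your plan lacks is the existence of the special indices $n_j$; this is the content of Lemma \ref{cor-1}, and without it a purely one-sided (past-looking) construction cannot control the ratio from below.
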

\begin{prop}[$b_{\psi}=1$]\label{thm-4}
Suppose  that a function $\psi : \mathbb{N} \to \mathbb{R}_{>0}$ satisfies the following hypothesis
\begin{equation*}\label{equ-5}
(A). \lim_{n\to \infty} \frac{\psi(n)}{n} = \infty; \quad (B). \liminf_{n\to \infty} {\psi(n)}^{1/n} = 1,
\end{equation*}
then there is a non-decreasing function $g_\psi : \mathbb{N} \to \mathbb{R}_{>0}$ satisfying
\begin{equation*}\label{equ-6}
(a). g_\psi(n) \leq \psi(n); \quad (b). \limsup_{n\to \infty}\frac{g_\psi(n)}{\psi(n)} = 1; \quad (c). \lim_{n\to \infty} \frac{g_\psi(n)}{n} = \infty; \quad (d)\lim_{n\to \infty} \frac{g_\psi(n+1)}{g_\psi(n)} = 1.
\end{equation*}
\end{prop}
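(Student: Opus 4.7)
My plan is to adapt the construction from Subsection~\ref{constrction} (which handled $1 < b_\psi < \infty$ with a constant ratio $b_\psi + \epsilon$) by letting that ratio vary with $n$ and decay to $1$. Fix a sequence $\epsilon_j \downarrow 0$ and checkpoints $m_0 = 0 < m_1 < m_2 < \cdots$ (selected inductively below), set $r_n := 1 + \epsilon_j$ for $n \in [m_{j-1}, m_j)$, and define
\[
c_{n,k} := \begin{cases} \psi(k)\prod_{i=k}^{n-1} r_i, & 1 \leq k \leq n, \\ \psi(k), & k > n, \end{cases} \qquad g_{\psi}(n) := \inf_{k \geq 1} c_{n,k}.
\]

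I would pick the checkpoints inductively so that, for each $j \geq 1$:
\begin{itemize}
\item[(i)] \emph{weighted minimality}: $\psi(k)(1+\epsilon_j)^{-k} \geq \psi(m_j)(1+\epsilon_j)^{-m_j}$ for every $1 \leq k < m_j$;
\item[(ii)] \emph{ascending property}: $\psi(m_j) \leq \psi(k)$ for every $k \geq m_j$;
\item[(iii)] \emph{regularity}: $\epsilon_j$ decays and $m_j - m_{j-1}$ grows fast enough that $R_n := \prod_{i=1}^{n-1} r_i$ satisfies $R_n/n \to \infty$ while still $\inf_{k \geq 1}\psi(k)/R_k > 0$.
\end{itemize}
Condition~(i) is achievable because $\liminf_n \psi(n)(1+\epsilon_j)^{-n} = 0$ (from Hypothesis~(B)), so I can pick a candidate $m_j^*$ making $\psi(m_j^*)(1+\epsilon_j)^{-m_j^*}$ smaller than every previously fixed value $\psi(k)(1+\epsilon_j)^{-k}$ for $k \leq m_{j-1}$; for $m_{j-1} < k < m_j^*$ the inequality is automatic if $m_j^*$ is taken as the \emph{first} such index. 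Condition~(ii) is then ensured by replacing $m_j^*$ with the smallest $m_j \geq m_j^*$ at which the minimum $\min_{k \geq m_j^*}\psi(k)$ is attained (well-defined because $\psi \to \infty$ by Hypothesis~(A)), a step that preserves~(i) since the new $m_j$ only lowers the right-hand side. Condition~(iii) is arranged by taking $m_j$ large enough at each stage, exploiting the infinite gap between $\log n$ and $\log\psi(n)$ that follows from $\psi(n)/n \to \infty$.

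With $g_\psi$ so defined, properties~(a), (c), and~(d) follow in analogy with Subsection~\ref{constrction}. Property~(a) is immediate from $c_{n,n} = \psi(n)$. For~(d), termwise $c_{n+1,k} \geq c_{n,k}$ (since $r_n \geq 1$) makes $g_\psi$ non-decreasing, and case analysis on the argmin of $g_\psi(n)$ combined with $c_{n+1,k} = r_n c_{n,k}$ for $k \leq n$ yields $g_\psi(n+1) \leq r_n g_\psi(n)$; since $r_n \to 1$, we get $g_\psi(n+1)/g_\psi(n) \to 1$. For~(c), write $A_n := \inf_{k \leq n}\psi(k) R_n/R_k = R_n \cdot \inf_{k \leq n}\psi(k)/R_k$ and $B_n := \inf_{k > n}\psi(k)$: Hypothesis~(A) gives $B_n/n \to \infty$, while condition~(iii) gives $A_n/n \geq (R_n/n) \cdot \inf_{k}\psi(k)/R_k \to \infty$.

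The heart of the argument, and the main obstacle, is property~(b). At $n = m_j$, the weighted minimality~(i) together with the fact that $r_i \geq 1 + \epsilon_j$ for every $i \in [k, m_j)$ (because each $r_i = 1 + \epsilon_{\ell(i)}$ with $\ell(i) \leq j$, hence $\epsilon_{\ell(i)} \geq \epsilon_j$) yields
\[
\psi(k)\prod_{i=k}^{m_j - 1} r_i \;\geq\; \psi(k)(1+\epsilon_j)^{m_j - k} \;\geq\; \psi(m_j) \qquad (1 \leq k < m_j),
\]
so $\inf_{k \leq m_j}c_{m_j, k} \geq \psi(m_j)$; meanwhile the ascending property~(ii) gives $\inf_{k > m_j}\psi(k) \geq \psi(m_j)$. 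Hence $g_\psi(m_j) = \psi(m_j)$, and $\limsup_{n\to\infty} g_\psi(n)/\psi(n) \geq \lim_{j\to\infty} g_\psi(m_j)/\psi(m_j) = 1$, while the reverse inequality is trivial from~(a). The delicate part throughout is the inductive checkpoint construction itself, where one must simultaneously balance the smallness of $\epsilon_j$ (for the weighted-minimality condition against previously fixed values) with the largeness of $m_j - m_{j-1}$ (for the regularity requirement~(iii)); I expect this bookkeeping to constitute the bulk of the technical work.
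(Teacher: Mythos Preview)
Your approach is viable but genuinely different from the paper's, and the place you flag as ``the bulk of the technical work'' is indeed where the difficulty concentrates---and it is more delicate than your sketch suggests.

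\medskip

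\textbf{What the paper does instead.} Rather than extending the infimum construction $g_\psi(n)=\inf_k c_{n,k}$ with variable ratios, the paper builds $g_\psi$ piecewise as a \emph{maximum} of two explicit functions. Using Lemma~\ref{cor-2} (a consequence of the abstract Lemma~\ref{lem-3} on simultaneous L/P-indices), it selects checkpoints $n_j$ that are simultaneously L-indices of $\psi(n)/n$ and P-indices of $\psi(n)(1+\epsilon)^{-n}$; then on $[n_j,n_{j+1}]$ it sets $g_\psi(n)=\max\{f_j(n),h_j(n)\}$ with $f_j(n)=n\psi(n_j)/n_j$ linear and $h_j(n)=(1+1/n_j)^{n-n_{j+1}}\psi(n_{j+1})$ exponential. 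The linear piece is the key innovation in the $b_\psi=1$ case: since $n_j$ is an L-index of $\psi(n)/n$, one has $g_\psi(n)/n\ge f_j(n)/n=\psi(n_j)/n_j\to\infty$, so property~(c) is \emph{automatic}---no analogue of your regularity condition~(iii) is needed.

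\medskip

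\textbf{Where your sketch is thin.} Your condition~(iii) asks for $R_n/n\to\infty$ together with $\inf_k\psi(k)/R_k>0$. From your condition~(i) one can check that $\psi(n)/R_n\ge\psi(m_j)/R_{m_j}$ for $n\in[m_{j-1},m_j]$, so (iii) reduces to keeping $\psi(m_j)/R_{m_j}$ bounded below; but (i) also forces $\psi(m_j)/R_{m_j}\le\psi(m_{j-1})/R_{m_{j-1}}$, so this ratio is non-increasing and one must prevent it from drifting to zero. With your ``first such index'' rule, $m_j$ is essentially determined by $\epsilon_j$, and for a test case like $\psi(n)=n^2$ a naive choice such as $\epsilon_j=2^{-j}$ or $\epsilon_j=1/j$ makes $\psi(m_j)/R_{m_j}\to 0$. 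The condition \emph{can} be rescued, but it requires tuning the decay of $\epsilon_j$ to $\psi$ (for $\psi(n)=n^2$ one needs $\epsilon_j$ to shrink roughly super-exponentially), which is rather more than ``taking $m_j$ large enough at each stage''. In short: your strategy works, but the paper's $\max$ construction with the L-index of $\psi(n)/n$ buys you property~(c) for free and avoids this bookkeeping altogether.
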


%\begin{rem}
When $b_{\psi}=\infty$ or $1<b_{\psi}<\infty$,  Hypothesis $(A)$ in Theorem \ref{thm-1} is deduced from Hypothesis $(B)$; While Property $(c)$ is deduced from Property $(d)$. Hence, these two items are omitted in the statements of Propositions \ref{thm-2} and \ref{thm-3}.
%\end{rem}
		
\subsection{Key Lemmas}\label{subs.lemcor}
In this subsection, we will prove the following two Lemmas \ref{cor-1} and \ref{cor-2}, which play an important role in proving Propositions \ref{thm-3} and \ref{thm-4}.

\begin{lemma}[Key Lemma 1]\label{cor-1}
Suppose that a function $\psi:\mathbb{N}^{*}\to\mathbb{R}_{+}$ satisfies the assumptions in Proposition \ref{thm-3}, then for every $\epsilon \in (0, b_{\psi}-1)$ and $N\in\mathbb{N}$, there is an integer $n^*>N$, such that
\begin{equation}\label{equ-8}
\psi(n)>(b_{\psi}-\epsilon)^{n-n^*} \psi(n^{*}), \forall n>n^* \quad and \quad \psi(n) > (b_{\psi}+\epsilon)^{n-n^{*}} \psi(n^*), \forall n <n^*.
\end{equation}
\end{lemma}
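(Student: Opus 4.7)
My plan is to recast the two inequalities in \eqref{equ-8} as simultaneous extremality conditions on the auxiliary sequences
\[
\phi^{-}(n) := \log\psi(n) - n\log(b_\psi - \epsilon), \qquad \phi^{+}(n) := \log\psi(n) - n\log(b_\psi + \epsilon).
\]
Taking logarithms, the first inequality is equivalent to $\phi^{-}(n) > \phi^{-}(n^*)$ for all $n > n^*$, and the second to $\phi^{+}(n) > \phi^{+}(n^*)$ for all $n < n^*$. The hypothesis $\liminf_n \psi(n)^{1/n} = b_\psi$ gives $\liminf \phi^{-}(n)/n = c := \log(b_\psi/(b_\psi-\epsilon)) > 0$, so $\phi^{-}(n) \to +\infty$, while $\liminf \phi^{+}(n)/n = -C := -\log((b_\psi+\epsilon)/b_\psi) < 0$, so $\phi^{+}(n) \to -\infty$ along some subsequence.

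I would then choose a threshold $M \geq N+1$ (to be specified) and define
\[
n^* := \max\bigl\{ n \geq M : \phi^{-}(n) = \min\nolimits_{k \geq M} \phi^{-}(k) \bigr\},
\]
which is well-defined since $\phi^{-} \to +\infty$. By the maximality of $n^*$, $\phi^{-}(n) > \phi^{-}(n^*)$ strictly for every $n > n^*$, yielding the forward inequality. For the backward inequality on $M \leq n < n^*$, I would invoke the identity
\[
\phi^{+}(n) - \phi^{+}(n^*) = \bigl(\phi^{-}(n) - \phi^{-}(n^*)\bigr) + (n^*-n)\, D,
\]
where $D := \log\bigl((b_\psi+\epsilon)/(b_\psi-\epsilon)\bigr) = c + C > 0$; since $\phi^{-}(n) \geq \phi^{-}(n^*)$ by minimality and $(n^*-n) D > 0$, the right-hand side is strictly positive.

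The main obstacle is the backward inequality for the initial range $1 \leq n < M$ (which includes the given range $n \leq N$), where $\phi^{-}(n)$ sits outside the minimization interval and may well be smaller than $\phi^{-}(n^*)$. The fix is to choose $M$ large enough that the single inequality $\phi^{+}(n^*) < \min_{k<M} \phi^{+}(k)$ holds, which then forces $\phi^{+}(n) > \phi^{+}(n^*)$ for every $n < M$. To achieve this, I would use a subsequence $m_j \uparrow \infty$ with $\psi(m_j)^{1/m_j} \to b_\psi$ (so $\phi^{-}(m_j) = c\, m_j + o(m_j)$) to bound $\phi^{-}(n^*) \leq \phi^{-}(m_j)$; combined with the eventual lower bound $\phi^{-}(n) \geq (c/2) n$, this forces $n^*$ to scale at most linearly with the first such $m_j$ exceeding $M$, whence
\[
\phi^{+}(n^*) = \phi^{-}(n^*) - n^* D \;\leq\; -C\, m_j + o(m_j) \longrightarrow -\infty.
\]
A parallel estimate bounds $\min_{k<M}\phi^{+}(k)$ from above by $-C\, m' + o(m')$ where $m'$ is the largest subsequence point below $M$, so taking $M$ between two consecutive subsequence points gives a genuine gap $m_j > m'$ and, after carefully handling the $o(\cdot)$ remainders, the strict comparison $\phi^{+}(n^*) < \min_{k<M}\phi^{+}(k)$ holds for $M$ sufficiently large. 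Controlling these remainder terms uniformly across all possible spacings of the liminf subsequence—both the nearly dense case where $\psi(n)^{1/n}\to b_\psi$ and the sparse case where the subsequence thins out—is the technical heart of the proof.
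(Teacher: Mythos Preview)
Your reformulation in terms of $\phi^{-}$ and $\phi^{+}$ is exactly the paper's: setting $a_\epsilon(n)=\psi(n)/(b_\psi-\epsilon)^n=e^{\phi^{-}(n)}$ and $c_\epsilon(n)=\psi(n)/(b_\psi+\epsilon)^n=e^{\phi^{+}(n)}$, the two inequalities in \eqref{equ-8} become precisely ``$n^*$ is an L-index of $a_\epsilon$'' and ``$n^*$ is a P-index of $c_\epsilon$''. Your steps (1)--(5) are correct and mirror the paper's observation that $a_\epsilon/c_\epsilon=((b_\psi+\epsilon)/(b_\psi-\epsilon))^n$ is strictly increasing, which is what drives the identity $\phi^{+}(n)-\phi^{+}(n^*)=(\phi^{-}(n)-\phi^{-}(n^*))+(n^*-n)D$.

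The gap is in step (6). To conclude $\phi^{+}(n^*)<\min_{k<M}\phi^{+}(k)$ you need a \emph{lower} bound on $\min_{k<M}\phi^{+}(k)$, but you only supply an \emph{upper} bound (``at most $-Cm'+o(m')$''), which is the wrong direction for the comparison. The asymptotic estimate you sketch, $\phi^{+}(n^*)\le -Cm_j+o(m_j)$, is correct when $M=m_j$, but all you know about $\phi^{+}(k)$ for generic $k<M$ is $\phi^{+}(k)\ge -(C+\delta)k$ eventually, which at $k=M-1$ gives $-(C+\delta)(M-1)$ --- \emph{smaller} than $-Cm_j+o(m_j)$, not larger. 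So the comparison does not follow from the stated estimates, and the ``careful handling of the $o(\cdot)$ remainders'' you defer is in fact the entire content of the step.

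The paper sidesteps this analytic difficulty with a purely combinatorial two-pass extraction (its Lemma~\ref{lem-3}). First take the maximal L-sequence $\{n_k\}$ of $\{a_\epsilon(n)\}$; Lemma~\ref{lem-2} gives $a_\epsilon(m)\ge a_\epsilon(n_k)$ for every $m\in(n_{k-1},n_k)$, and then the strictly increasing ratio forces $c_\epsilon(m)>c_\epsilon(n_k)$ on that interval --- this is exactly your step-(5) identity, but now valid on the \emph{full} gap between consecutive L-indices, not just on $[M,n^*)$. It follows that $\liminf_k c_\epsilon(n_k)=0$, so one extracts P-indices $\{n_{k_m}\}$ of the \emph{subsequence} $\{c_\epsilon(n_k)\}$. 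Each $n_{k_m}$ is then automatically a P-index of the full sequence $\{c_\epsilon(n)\}$: for $n<n_{k_m}$ lying in some $(n_{\ell-1},n_\ell]$ with $\ell\le k_m$, either $c_\epsilon(n)>c_\epsilon(n_\ell)\ge c_\epsilon(n_{k_m})$ or $n=n_\ell$ and $c_\epsilon(n_\ell)>c_\epsilon(n_{k_m})$ by the P-index property. No growth-rate estimate is needed; only the monotonicity of $a_\epsilon/c_\epsilon$ and the nested extremal structure. Replacing your step (6) by this double extraction closes the argument cleanly.
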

and
\begin{lemma}[Key Lemma 2]\label{cor-2}
Suppose that a function $\psi:\mathbb{N}^{*}\to\mathbb{R}_{+}$ satisfies the assumptions in Proposition \ref{thm-4}, then for every $\epsilon>0$ and $N\in\mathbb{N}$, there is an integer $n^*>N$, such that
\begin{equation}\label{equ-9}
n^* \psi(n)>n \psi(n^*), \forall n>n^* \quad and \quad \psi(n) > (1+\epsilon)^{n-n^*} \psi(n^*), \forall n<n^*.
\end{equation}
\end{lemma}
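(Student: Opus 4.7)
\textbf{Proof plan for Lemma \ref{cor-2}.} My strategy is to pick $n^*$ as the largest minimiser of $\psi(n)/n$ on a tail $[r,\infty)$, where $r$ is a sufficiently large ``left record'' of the auxiliary sequence $\theta(n):=\psi(n)/(1+\epsilon)^n$. The first inequality in \eqref{equ-9} will come from the minimality of $n^*$, while the second will follow from the left-record property of $r$, bridged across $[r,n^*]$ by an elementary logarithmic comparison.

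I begin with the choice of $r$. Since $\liminf_{n\to\infty}\psi(n)^{1/n}=1<1+\epsilon$, there is a subsequence $n_k\to\infty$ along which $\psi(n_k)\le(1+\epsilon/2)^{n_k}$, hence $\theta(n_k)\to 0$. Consequently the set of $r$ with $\theta(r)<\theta(m)$ for every $m<r$ is infinite, and I fix such a left record $r>\max\{N,\,2/\log(1+\epsilon)\}$. Since Hypothesis $(A)$ gives $\psi(n)/n\to\infty$, the minimum $\min_{m\ge r}\psi(m)/m$ is attained on a finite set; I define $n^*$ to be its largest element, so $n^*\ge r>N$. By the choice of $n^*$ as the \emph{largest} minimiser, any $n>n^*$ lies in $[r,\infty)$ but is not a minimiser, giving $\psi(n)/n>\psi(n^*)/n^*$ strictly, which establishes the first inequality in \eqref{equ-9}.

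For the second inequality, I split $n<n^*$ into the ranges $[r,n^*)$ and $[1,r)$. When $r\le n<n^*$, the minimality of $n^*$ gives $\psi(n^*)\le(n^*/n)\psi(n)$, and the chain $\log(n^*/n)\le(n^*-n)/n\le(n^*-n)\log(1+\epsilon)/2$ (valid since $r\ge 2/\log(1+\epsilon)$) upgrades this to $\psi(n^*)<(1+\epsilon)^{n^*-n}\psi(n)$. When $n<r$, the left-record property at $r$ yields $\psi(r)<(1+\epsilon)^{r-n}\psi(n)$; combining with $\psi(n^*)\le(n^*/r)\psi(r)$ and the analogous estimate $(n^*/r)\le(1+\epsilon)^{n^*-r}$, this again gives $\psi(n^*)<(1+\epsilon)^{n^*-n}\psi(n)$. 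The main conceptual obstacle is that the two conditions of \eqref{equ-9} pull in opposite directions: the first demands arithmetic-rate minimality of $\psi$ (slope $1/n$), while the second demands geometric-rate minimality (slope $\log(1+\epsilon)$). The decisive quantitative bridge is the inequality $(n^*/n)<(1+\epsilon)^{n^*-n}$, valid once $n\ge 2/\log(1+\epsilon)$, which lets a single index $n^*$ simultaneously serve both roles; Hypothesis $(B)$ enters only through the existence of arbitrarily large left records of $\theta$, a requirement strictly weaker than $\liminf_n\psi(n)^{1/n}=1$.
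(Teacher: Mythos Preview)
Your proof is correct but follows a genuinely different route from the paper's. The paper proves Lemma~\ref{cor-2} by invoking the general Lemma~\ref{lem-3} with $a(n)=\psi(n)/n$ and $c_\epsilon(n)=\psi(n)/(1+\epsilon)^n$: using the combinatorics of maximal L-sequences (Lemma~\ref{lem-2}), that lemma produces an $n^*$ which is \emph{simultaneously} an L-index of $\{a(n)\}$ and a P-index of $\{c_\epsilon(n)\}$, so both halves of \eqref{equ-9} follow at once from the definitions. You instead decouple the two requirements: you first pick a P-index $r$ of $\{c_\epsilon(n)\}$ (your ``left record'' of $\theta$), then separately choose $n^*\ge r$ as the largest minimiser of $a(m)$ on $[r,\infty)$, which makes $n^*$ an L-index of $\{a(n)\}$ but not necessarily a P-index of $\{c_\epsilon(n)\}$. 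The gap is closed by the quantitative bridge $\log(n^*/n)\le (n^*-n)/n\le \tfrac{1}{2}(n^*-n)\log(1+\epsilon)$ for $n\ge r>2/\log(1+\epsilon)$, which transports the geometric minimality at $r$ forward to $n^*$. Your argument is more elementary and self-contained, bypassing the L-sequence analysis of Lemmas~\ref{lem-2} and~\ref{lem-3}; the price is that your bridge exploits the specific comparison between linear and exponential growth peculiar to the case $b_\psi=1$, whereas the paper's abstract lemma is reusable and handles Lemma~\ref{cor-1} (the case $1<b_\psi<\infty$) by exactly the same mechanism.
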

To proceed with the proof of Lemmas \ref{cor-1} and \ref{cor-2}, let us introduce some notions. Given a sequence of positive real numbers $\{a(n)\}_{n\in\mathbb{N}}$, denote an index $\hat{n}\in \mathbb{N}$ as an \emph{L-index} of $\{a(n)\}_{n\in\mathbb{N}}$, if $a(n) > a(\hat{n}),\forall n>\hat{n}$. Suppose $\{a(n)\}_{n\in\mathbb{N}}$ admits infinitely many L-indices, then one can order \textbf{all} L-indices into an increasing sequence of natural numbers $0=n_0<n_1<n_2<\cdots<n_k<\cdots$, and every $n\in (n_{j},n_{j+1})$ is not an L-index. Such a sequence is unique and is denoted as the \emph{maximal infinite L-sequence} of $\{a(n)\}_{n\in\mathbb{N}}$.

Analogously, denote an index $\bar{n}\in \mathbb{N}$ as a \emph{P-index} of $\{a(n)\}_{n\in\mathbb{N}}$, if $a(n) > a(\bar{n}), \forall n<\bar{n}$. Suppose $\{a(n)\}_{n\in\mathbb{N}}$ admits infinitely many P-indices, one can well define the \emph{maximal infinite P-sequence} of $\{a(n)\}_{n\in\mathbb{N}}$ accordingly.

\medskip
	
With the convention of maximal L-sequence, we have
\begin{lemma}\label{lem-2}
Given a sequence $\{a(n)\}_{n\in\mathbb{N}}$, suppose $0=n_0 < n_1 < n_2 < \cdots$ is the infinite maximal L-sequence of $\{a(n)\}_{n\in\mathbb{N}}$, then we have the following two properties:
\begin{itemize}
\item[(i).] the subsequence $\{a(n_k)\}_{k=1}^\infty$ over the maximal infinite L-sequence is strictly ascending;
\item[(ii).] for every $k\in\mathbb{N}$ and every $m\in (n_{k-1}, n_k)$ (if any), one has $a(m) \geq a(n_k)$.
\end{itemize}
\end{lemma}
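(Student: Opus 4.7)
The plan splits the lemma along its two items, with (i) following immediately from the definition of L-index and (ii) requiring a short finite-descent argument that is the only substantive step.

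For (i), I would apply the defining property of $n_k$ as an L-index directly to the next point $n=n_{k+1}$. Since $n_{k+1}>n_k$, the L-property gives $a(n_{k+1})>a(n_k)$, which is exactly the strict ascent of $\{a(n_k)\}_{k\geq 1}$. No further argument is needed.

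For (ii), I would argue by contradiction. Fix $k\geq 1$ and $m\in(n_{k-1},n_k)$, and suppose $a(m)<a(n_k)$. Since $m$ sits strictly between consecutive terms of the maximal L-sequence, $m$ is not itself an L-index, so there exists some $n>m$ with $a(n)\leq a(m)$. Let $n^{(1)}$ be the least such $n$. The critical step is to locate $n^{(1)}$ in the open interval $(m,n_k)$: if $n^{(1)}>n_k$, the L-property of $n_k$ would force $a(n^{(1)})>a(n_k)>a(m)$, contradicting $a(n^{(1)})\leq a(m)$; if $n^{(1)}=n_k$, the inequality $a(n_k)\leq a(m)$ contradicts the standing assumption $a(m)<a(n_k)$. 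Thus $n^{(1)}\in(m,n_k)\subset(n_{k-1},n_k)$, so $n^{(1)}$ is again not an L-index, and I can iterate by taking the least $n^{(2)}>n^{(1)}$ with $a(n^{(2)})\leq a(n^{(1)})\leq a(m)$, once more forced into $(m,n_k)$ by the same dichotomy.

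Iterating indefinitely would produce an infinite strictly increasing sequence $m<n^{(1)}<n^{(2)}<\cdots$ contained in the finite integer interval $(n_{k-1},n_k)$, a contradiction. Hence $a(m)\geq a(n_k)$, proving (ii). The only step I expect to require care is establishing the upper barrier at $n_k$ during each iteration; this is precisely where the assumption $a(m)<a(n_k)$ is used, and once it is in place the finite-pigeonhole conclusion is automatic.
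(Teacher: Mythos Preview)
Your proof is correct. Part (i) is handled identically to the paper. For part (ii), the paper proceeds by reverse induction on $m$, starting at $m=n_k-1$ and working downward: at each step it shows that if $a(m)<a(n_k)$ held, then together with the already-established inequalities $a(n)\geq a(n_k)$ for $n\in[m+1,n_k)$ and the L-property of $n_k$, the index $m$ itself would be an L-index, contradicting maximality. Your argument instead runs an upward iteration from a single hypothetical bad $m$, using the barrier at $n_k$ to trap each successive $n^{(j)}$ inside $(m,n_k)$ and then invoking finiteness. Both arguments rely on exactly the same two ingredients---non-L-index status of points strictly between $n_{k-1}$ and $n_k$, and the inequality $a(n)>a(n_k)$ for $n>n_k$---so the difference is organizational rather than conceptual; your version has the mild advantage of not needing to set up an inductive hypothesis.
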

\begin{proof}[Proof of Lemma \ref{lem-2}]
First, Property $(i)$ follows directly from the definition of the maximal infinite L-sequence.

Next, we proceed the proof of Property $(ii)$. Without loss of generality, fix an integer $k>0$ with $n_{k-1}<n_{k}-1$. Otherwise, if no such integer $k$ exists, there is nothing to prove. We will show that Property $(ii)$ holds for all $m\in (n_{k-1}, n_k)$. To this end, we will use induction on $m$ (via a revise order).

First, let us show Property $(ii)$ holds when $m=n_{k}-1$. We proceed the proof by contradiction. Suppose on the contrary, we have
\begin{equation}\label{equ:n_k-1}
a(n_{k}-1)<a(n_{k}).
\end{equation}
On the other hand, due to the fact that $n_{k}$ is an L-index, then for every $n\geq n_{k}$, we have
\begin{equation}\label{equ:n_k}
a(n)>a(n_{k}).
\end{equation}
Combining \eqref{equ:n_k-1} and \eqref{equ:n_k} together, it follows that
\begin{equation}\label{equ:n_k-12}
a(n_{k}-1)<a(n),~\mbox{for every}~~n>n_{k}-1.
\end{equation}
In other words, \eqref{equ:n_k-12} means that $n_{k}-1$ is also an L-index, which is obviously a contradiction to the fact that $\{n_{k}\}$ is the maximal L-sequence. Therefore we have $a(n_{k}-1)\geq a(n_{k})$.

Next, suppose $m,m-1\in (n_{k-1},n_{k})$, and we have already show Property $(ii)$ holds for $m$, then we have
\begin{equation}\label{equ:m-1}
a(n)\geq a(n_{k}),~~\mbox{for every}~~n\in[m,n_{k}).
\end{equation}
Meanwhile, suppose on the contrary that Property $(ii)$ fails for $m-1$, then
\begin{equation}\label{equ:m}
a(m-1)<a(n_{k}),
\end{equation}
Combining \eqref{equ:m-1} and \eqref{equ:m}, if follows that
\begin{equation}\label{equ:m-12}
a(m-1)<a(n_{k})\leq a(n),~~\mbox{for every}~~n\in[m,n_{k}).
\end{equation}
Together with \eqref{equ:n_k}, it further implies that
\begin{equation}
a(m-1)<a(n),~~\mbox{for every}~~n>m-1,
\end{equation}
which means that $m-1$ is also an L-index. Obviously, it is a contradiction to the fact that $\{n_{k}\}$ is the maximal infinite L-sequence. Therefore we complete the proof of Lemma \ref{lem-2}.
\end{proof}

\begin{lemma}\label{lem-7.5}
The following two properties are true.
\begin{itemize}
\item[(i)] A sequence $\{a(n)\}_{n\in\mathbb{N}}$ of real numbers satisfies $\lim\limits_{n\to\infty}a(n)=\infty$ admits a maximal infinite L-sequence;
\item[(ii)]A sequence $\{c(n)\}_{n\in\mathbb{N}}$ of positive real numbers satisfies $\liminf\limits_{n\to \infty} c(n) = 0$ admits a maximal infinite P-sequence.
\end{itemize}
\end{lemma}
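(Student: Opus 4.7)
My plan is to prove the two parts separately by showing that in each case one can actually produce an infinite supply of L-indices (respectively P-indices); the existence of the maximal infinite sequence is then immediate from the definition.

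For part (i), the strategy is to show that for every $N\in\mathbb{N}$ there exists an L-index $\hat{n}\geq N$. The hypothesis $a(n)\to\infty$ means that for any real number $M$ the set $\{n\geq N:a(n)\leq M\}$ is finite, so in particular the infimum $m_N\coloneqq \inf_{n\geq N} a(n)$ is actually a minimum and the set $S_N\coloneqq \{n\geq N:a(n)=m_N\}$ is a finite nonempty set. Define $n^*\coloneqq \max S_N$. For any $n>n^*$ we have $n\geq N$ and hence $a(n)\geq m_N=a(n^*)$, while equality is excluded by maximality of $n^*$ in $S_N$; thus $a(n)>a(n^*)$ for all $n>n^*$, which means $n^*$ is an L-index with $n^*\geq N$. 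Since $N$ was arbitrary, there are infinitely many L-indices, and ordering them gives the maximal infinite L-sequence.

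For part (ii), the strategy is to inductively construct a strictly increasing sequence of P-indices $\bar{n}_1<\bar{n}_2<\cdots$. Set $\bar{n}_1\coloneqq 1$, which is a P-index vacuously. Given $\bar{n}_k$, define
\[
\bar{n}_{k+1}\coloneqq \min\{n>\bar{n}_k : c(n)<c(\bar{n}_k)\}.
\]
This minimum exists because $c(\bar{n}_k)>0$ while $\liminf_{n\to\infty}c(n)=0$ guarantees infinitely many $n$ with $c(n)<c(\bar{n}_k)$. To verify $\bar{n}_{k+1}$ is a P-index, I will check $c(\bar{n}_{k+1})<c(m)$ for every $m<\bar{n}_{k+1}$ by splitting into two ranges. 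For $m\leq\bar{n}_k$, the inductive hypothesis that $\bar{n}_k$ is a P-index (vacuously when $k=1$) gives $c(m)\geq c(\bar{n}_k)>c(\bar{n}_{k+1})$. For $\bar{n}_k<m<\bar{n}_{k+1}$, the minimality in the definition of $\bar{n}_{k+1}$ forces $c(m)\geq c(\bar{n}_k)>c(\bar{n}_{k+1})$. Thus $\bar{n}_{k+1}$ is a P-index, the induction proceeds, and the constructed sequence is a strictly increasing infinite list of P-indices, yielding the maximal infinite P-sequence.

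There is no real obstacle in this argument; it is essentially bookkeeping about extrema. The only subtle point, on which I would be careful in writing, is the minimality/maximality choices: in (i) one must take the \emph{largest} index achieving the tail minimum (to get strict inequality for $n>n^*$), while in (ii) one must take the \emph{smallest} index beyond $\bar{n}_k$ at which $c$ drops below $c(\bar{n}_k)$ (so that the intermediate values lie above $c(\bar{n}_k)$, guaranteeing the P-index property).
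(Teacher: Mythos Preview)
Your proof is correct and follows essentially the same idea as the paper's: both arguments hinge on locating indices where the tail minimum (for (i)) or the running minimum (for (ii)) is attained. The paper packages this via auxiliary monotone sequences $b(n)=\inf_{k>n}a(k)$ and $d(m)=\min_{1\le n\le m}c(n)$ and reads off the L- and P-indices from the points of strict increase/decrease, whereas you extract the same indices directly (taking the largest index realizing the tail minimum in (i), and an inductive first-drop construction in (ii)); the underlying content is identical.
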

\begin{proof}[Proof of Lemma \ref{lem-7.5}]
Let us first proceed the proof of Property (i). For each $n\in\mathbb{N}$, denote by $b(n):= \inf\limits_{k>n} a(k)$. Clearly, $b(n)<\infty$ and is increasing for all $n\in \mathbb{N}$. Moreover, the hypothesis that $\lim\limits_{n\to\infty}a(n)=\infty$ implies that $\limsup\limits_{n\to\infty}b(n)=\infty$. Hence, there are infinitely many indices $n_1<n_2<\cdots<n_k<\cdots$ such that $b(n_k)>b(n_k - 1)$. Together with the definition of $\{b(n)\}_{n\in\mathbb{N}}$,  it indicates that $a_{n_{k}}=\min_{n\geq n_k}\{a(n)\}$, which yields that $a(n)>a(n_k)$ for all $n> n_k$. Hence the sequence $\{a(n)\}_{n\in\mathbb{N}}$ has infinitely many L-indices. Therefore, $\{a(n)\}_{n\in\mathbb{N}}$ admits a maximal infinite L-sequence, as we want.
			
Analogously, for Property (ii), denote by $d(m):=\min\limits_{1\leq n \leq m} c(n)$. Then $\{d(m)\}_{m\in\mathbb{N}}$ is a positive decreasing sequence and  $\lim\limits_{m\to\infty}d(m)=0$. Hence there are infinitely many indices $m_1<m_2<\cdots<m_k<\cdots$, such that $d(m_k) < d(m_{k}-1)$. In other words, $c(m_k) < c(n)$ for every $n<m_k$. Consequently, the sequence $\{c(n)\}_{n\in\mathbb{N}}$ admits a maximal infinite P-sequence, as we want.
\end{proof}

Based on Lemma \ref{lem-2}, we will prove the following lemma, which is a cornerstone for verifying Lemma \ref{cor-1} and \ref{cor-2}.
		
\begin{lemma}\label{lem-3}
Suppose that $\{a(n)\}_{n\in\mathbb{N}}$ and $\{c(n)\}_{n\in\mathbb{N}}$ are two sequences of positive real numbers, satisfying the following two hypotheses.
\begin{itemize}
\item[(A).] $\lim\limits_{n\to\infty} a(n) = \infty, \quad \liminf\limits_{n\to\infty} c(n) = 0$;
\item[(B).] the sequence of ratio $\{a(n)/c(n)\}_{n\in\mathbb{N}}$ is strictly increasing whenever $n>N_0$, for some integer $N_{0}>0$.
\end{itemize}
Then for every integer $N>0$, there exists an integer $n^* >N$, such that $n^*$ is both an L-index of $\{a(n)\}$ and a P-index of $\{c(n)\}$, (i.e., $\forall n>n^*, a(n) > a (n^*) \quad and \quad \forall n<n^*, c(n) > c(n^*)$).
\end{lemma}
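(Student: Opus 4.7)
The plan is to construct $n^*$ by iteratively refining a P-index of $\{c(n)\}$ until it also becomes an L-index of $\{a(n)\}$. By Lemma \ref{lem-7.5}(ii) applied to $\{c(n)\}$, the maximal P-sequence of $c$ is infinite, so I can choose an initial P-index $n_1$ of $c$ with $n_1 > \max\{N, N_0\}$. If $n_1$ is already an L-index of $a$, we set $n^* = n_1$ and are done. Otherwise, I would let $n_2 > n_1$ be the smallest index with $a(n_2) \leq a(n_1)$, and iterate: whenever $n_j$ fails to be an L-index of $a$, define $n_{j+1} > n_j$ as the smallest index with $a(n_{j+1}) \leq a(n_j)$.

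The crucial inductive step is to show that each $n_{j+1}$ remains a P-index of $c$. For indices $n < n_j$, this follows from the inductive hypothesis together with the fact, derived from Hypothesis (B) applied at $N_0 < n_j < n_{j+1}$, that $c(n_{j+1})/c(n_j) < a(n_{j+1})/a(n_j) \leq 1$, hence $c(n_j) > c(n_{j+1})$ and therefore $c(n) > c(n_j) > c(n_{j+1})$. For the ``new'' indices $n \in (n_j, n_{j+1})$, the minimality of $n_{j+1}$ yields $a(n) > a(n_j) \geq a(n_{j+1})$, and then the strict ratio monotonicity applied at $N_0 < n < n_{j+1}$ gives $c(n)/c(n_{j+1}) > a(n)/a(n_{j+1}) > 1$. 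Combining both cases, $c(n) > c(n_{j+1})$ for all $n < n_{j+1}$, confirming the P-index property.

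Finally, I would show that the iteration must terminate after finitely many steps. By construction the sequence $\{a(n_j)\}$ is non-increasing, so every $n_j$ lies in the set $\{n \in \mathbb{N} : a(n) \leq a(n_1)\}$. Since $a(n) \to \infty$ by Hypothesis (A), this set is finite, yet the $n_j$'s are strictly increasing, so the process halts at some finite $J$. The terminal index $n^* := n_J$ is then simultaneously an L-index of $a$ and a P-index of $c$, with $n^* \geq n_1 > N$, completing the proof. The main obstacle is verifying that the P-index property is preserved through the iteration, especially for the intermediate indices $n \in (n_j, n_{j+1})$; this is precisely where the strict monotonicity of the ratio $a(n)/c(n)$ provided by Hypothesis (B) is essential.
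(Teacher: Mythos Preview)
Your proof is correct and takes a genuinely different route from the paper's. The paper begins from the other side: it fixes the maximal infinite L-sequence $\{n_k\}$ of $\{a(n)\}$ (via Lemma~\ref{lem-7.5}(i)), then uses Lemma~\ref{lem-2} together with the ratio monotonicity to show $c(n)>c(n_k)$ for every $n\in(n_{k-1},n_k)$; this forces $\liminf_k c(n_k)=0$, so the subsequence $\{c(n_k)\}$ has infinitely many P-indices $n_{k_m}$, and each such $n_{k_m}$ is then checked to be a P-index of the full sequence $\{c(n)\}$. Your approach reverses the roles: you start from a P-index of $\{c(n)\}$ and iteratively push it forward until it becomes an L-index of $\{a(n)\}$, using Hypothesis~(B) to preserve the P-index property at each step and Hypothesis~(A) to guarantee termination. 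Your argument is more elementary in that it bypasses the structural Lemma~\ref{lem-2} entirely and constructs a single $n^*$ directly; the paper's argument, on the other hand, produces the whole infinite family $\{n_{k_m}\}$ of common indices at once and fits more naturally into the L-/P-sequence framework already set up in Subsection~\ref{subs.lemcor}.
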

\begin{proof}[Proof of Lemma \ref{lem-3}]
Due to $\lim\limits_{n\to\infty}a(n)=\infty$ in Hypothesis (A), it follows from Lemma \ref{lem-7.5} that there are infinitely many L-indices of $\{a(n)\}_{n\in
\mathbb{N}}$. Therefore, we are able to denote $0=n_{0}<n_1<n_2<\cdots$ as the maximal infinite L-sequence of $\{a(n)\}_{n\in\mathbb{N}}$.

Fix the integer $N_{0}$ in Hypothesis (B), and choose an integer $K>0$, such that $n_{K-1}>N_0$. Due to Hypothesis (B),  for any $n \in (n_{k-1}, n_k)$(if any) with $k>K$, we have
\begin{equation}\label{equ:ratio}
a(n)/c(n) < a(n_k)/c(n_k).
\end{equation}
Combining \eqref{equ:ratio} and Lemma \ref{lem-2}, it then follows that
\begin{equation}\label{equ-7}
c(n) = a(n)\frac{c(n)}{a(n)} > a(n_k)\frac{c(n_k)}{a(n_k)} = c(n_k), \forall n\in (n_{k-1}, n_k), \forall k>K.
\end{equation}
On the other hand, noting that $\liminf\limits_{n\to\infty} c(n) = 0$ in Hypothesis (B). Therefore, \eqref{equ-7} further implies that
\begin{equation}\label{equ:subsequence}
\liminf\limits_{k\to \infty} c(n_k) = 0,
\end{equation}
which by Lemma \ref{lem-7.5} yields that the subsequence $\{c(n_k)\}_{k\in\mathbb{N}}$ has infinitely many P-indices, and one can select a subsequence $0=n_{0}<n_{k_1}<n_{k_2}<\cdots$ from the sequence $\{n_{i}\}$ as the maximal infinite P-sequence of $\{c(n_{k})\}_{k\in\mathbb{N}}$.
			
Choose an integer $M>0$, such that $k_m>K$, whenever $m>M$. By the definition of $\{n_{k_i}\}$, every index $n_{k_m}$ with $m>M$ is an L-index of $\{a(n)\}_{n\in\mathbb{N}}$. Moreover, we further claim that
\begin{claim}
$n_{k_m}$ is also a P-index of $\{c(n)\}_{n\in\mathbb{N}}$.
\end{claim}
To see this, note that for every $n<n_{k_m}$, there exists a $m^\prime \in [0,m)$ such that $n_{k_{m^\prime}} \leq n < n_{k_{m^\prime}+1}$. Therefore,
$c(n)\geq c(n_{k_{m^\prime}+1}) \geq c(n_{k_m})$, and these two equalities cannot hold simultaneously. This implies that $c(n)>c(n_{k_m})$, and thus completes the proof of the claim.
			
Consequently, we showed that every $n_{k_m}$ with $m>M$ is both L-index of $\{a(n)\}_{n\in\mathbb{N}}$ and a P-index of $\{c(n)\}_{n\in\mathbb{N}}$. The proof of Lemma \ref{lem-3} is thus completed.
\end{proof}

\bigskip

Derived from Lemma \ref{lem-3}, we are now ready to prove Lemmas \ref{cor-1} and \ref{cor-2} as follows.
		
\begin{proof}[Proof of Lemma \ref{cor-1}]
Denote two sequences
\begin{equation*}
a_\epsilon (n):= \psi(n)/(b_{\psi}-\epsilon)^n,\quad c_\epsilon (n):= \psi(n)/(b_{\psi}+\epsilon)^n,~~\mbox{for every}~~n\in\mathbb{N}.
\end{equation*}
Divide by $b_{\psi}-\epsilon$ and $b_{\psi}+\epsilon$ respectively on both sides of the equation: $\liminf\limits_{n\to \infty} {\psi(n)}^{1/n} = b$. It then follows that
\begin{equation*}
\liminf\limits_{n\to \infty} {\psi(n)}^{1/n}/(b_{\psi}-\epsilon) = b_{\psi}/(b_{\psi}-\epsilon), \quad \liminf\limits_{n\to \infty} {\psi(n)}^{1/n}/(b_{\psi}+\epsilon) = b_{\psi}/(b_{\psi}+\epsilon).
\end{equation*}
Note that $b_{\psi}/(b_{\psi}-\epsilon)>1, 0<b_{\psi}/(b_{\psi}+\epsilon)<1$. Thus, $\lim\limits_{n\to\infty} a_\epsilon (n) = \infty, \liminf\limits_{n\to\infty} c_\epsilon (n) = 0$.

Additionally, it is also apparent that the sequence
$$
\left\{\frac{a_\epsilon (n)}{c_\epsilon (n)}\right\}_{n\in\mathbb{N}} = \left\{\left(\frac{b_{\psi}+\epsilon}{b_{\psi}-\epsilon}\right)^n\right\}_{n\in\mathbb{N}}
$$
is strictly increasing. Thus, applying Lemma \ref{lem-3} on the sequences $\{a_{\epsilon}(n)\}_{n\in\mathbb{N}}$ and $\{c_{\epsilon}(n)\}_{n\in\mathbb{N}}$, it directly implies \eqref{equ-8}, as we wanted.
\end{proof}
		
The proof of Lemma \ref{cor-2} is analogous to Lemma \ref{cor-1}.	
\begin{proof}[Proof of Lemma \ref{cor-2}]
Denote two sequences
$$
a(n) = \psi(n)/n,\quad c_\epsilon (n) = \psi(n)/(1+\epsilon)^n,~~~~\mbox{for every}~~n\in\mathbb{N}.
$$
Hypothesis $(A)$ in Theorem \ref{thm-4} implies $\lim\limits_{n\to\infty}a(n)=\infty$. Hypothesis $(B)$ in Theorem \ref{thm-4} implies that $\liminf\limits_{n\to\infty} c_\epsilon(n) = 0$. Moreover,
$$
\left\{a(n)/c_\epsilon(n)\right\}_{n\in\mathbb{N}} = \left\{(1+\epsilon)^n/n\right\}_{n\in\mathbb{N}}
$$
is strictly increasing whenever $n$ is sufficiently large. Hence by applying Lemma \ref{lem-3} on sequences $\{a(n)\}_{n\in\mathbb{N}}$ and $\{c_{\epsilon}(n)\}_{n\in\mathbb{N}}$, it implies \eqref{equ-9}, as we wanted.
		\end{proof}
		
\subsection{Proof of Theorem \ref{thm-1}}
With the aid of Lemmas \ref{cor-1} and \ref{cor-2}, we are ready to proceed the proof of Theorem \ref{thm-1}, which is split into the proof of Propositions \ref{thm-2}, \ref{thm-3}, \ref{thm-4}. Each proof of these propositions occupies a subsection, and follows from the same strategy: first, construct $g_\psi$, and subsequently verify the corresponding properties.

\subsubsection{Proof of Proposition \ref{thm-2}}
\begin{proof}[Proof of Proposition \ref{thm-2}]
By hypothesis that $\lim\limits_{n\to\infty}{\psi(n)}^{1/n}=\infty$, it follows from Lemma \ref{lem-7.5} that the sequence $\{{\psi(n)}^{1/n}\}_{n\in\mathbb{N}}$ admits a maximal infinite L-sequence $0=n_{0}<n_1 < n_2 < \cdots$. Moreover, there exists a $K$ such that ${\psi(n_k)}^{1/n_k}>1$ for all $k\geq K$.
			
\textbf{Construction of $g_{\psi}$:} We construct $g_\psi$ as follows.
\begin{itemize}
\item For $n<n_K$, define $g_\psi(n)$ recursively. Suppose $g_\psi(n+1)$ is defined, then $g_\psi(n):=\min\{g_\psi(n+1), \psi(n)\}$.
\item For $n \in [n_k, n_{k+1})$ with $k\geq K$, define $g_\psi(n):={\psi(n_k)}^{n/n_k}$.
\end{itemize}
\textbf{Properties verification:} In the rest of the proof, let us verify the desired properties.
\begin{itemize}
\item First let us show Property $(a)$. When $n<n_K$, it follows directly from our construction that $g_\psi(n)\leq \psi(n)$. When $n \in [n_k, n_{k+1}), k\geq K$, note that by the definition of L-indices, $\psi(n)^{1/n} \geq \psi(n_k)^{1/n_k}$. Hence $g_\psi(n) = \left({\psi(n_k)}^{1/n_k}\right)^n \leq \left({\psi(n)}^{1/n}\right)^n = \psi(n)$, and Property $(a)$ gets proved, as we want.
\item Next, Property $(b)$ is directly derived from $g_\psi(n_k) = \psi(n_k)$ for all $k\geq K$.
\item Regarding to showing that $g_{\psi}$ is non-decreasing and Property $(c)$, let us consider the ratio between two successive terms of $g_\psi$. When $n<n_K$, it is clear from our construction that $g_\psi(n)\leq g_\psi(n+1)$. In what follows, let us turn to the case where $n\geq n_K$. In particular, fix $n\in [n_k, n_{k+1})$, with $k\geq K$.
\begin{itemize}
\item If $n=n_k$, then
\begin{equation*}
\frac{g_\psi(n)}{g_\psi(n-1)} = {\psi(n_k)}^{1/n_k}\left(\frac{ {\psi(n_k)}^{1/n_k} }{{\psi(n_{k-1})}^{1/n_{k-1}}}\right)^{n_k-1} \geq \psi(n_k)^{1/n_k}.
\end{equation*}
\item Otherwise, we have
\begin{equation*}
\frac{g_\psi(n)}{g_\psi(n-1)} = \psi(n_k)^{1/n_k},
\end{equation*}
\end{itemize}
In both cases, we always have
\begin{equation}
g_\psi(n)/g_\psi(n-1) \geq \psi(n_k)^{1/n_k}.
\end{equation}
By the hypothesis, $\psi(n_k)^{1/n_k}\geq 1$ whenever $k\geq K$, and tends to infinity as $k\to \infty$. Consequently, $g_\psi$ is non-decreasing, and Property $(c)$ is also deduced.
\end{itemize}
Thus, we have verified Properties $(a),(b)$ and $(c)$, and the proof of Proposition \ref{thm-2} completes.
\end{proof}

\subsubsection{Proof of Proposition \ref{thm-3}}
\begin{proof}[Proof of Proposition \ref{thm-3}]
Fix a constant $\epsilon\in (0,b_{\psi}-1)$, and construct inductively a sequence of indices $\{n_j\}_{j\in\mathbb{N}}$ as follows. Choose a positive integer $n_1$, such that
\begin{equation*}
\psi(n)>(b_{\psi}-\epsilon)^{n-n_1} \psi(n_1), \forall n>n_1 \quad and \quad \psi(n) > (b_{\psi}+\epsilon)^{n-n_1} \psi(n_1), \forall n < n_1
\end{equation*}
The existence of $n_1$ is guaranteed by Lemma \ref{cor-1}. Suppose $n_1<\cdots<n_{j-1}$ is already chosen, choose a positive integer $n_j > n_{j-1}$, such that
\begin{equation}\label{equ-a}
\psi(n)>(b_{\psi}-\epsilon/j)^{n-n_j} \psi(n_j), \forall n>n_j \quad and \quad \psi(n) > (b_{\psi}+\epsilon/j)^{n-n_j} \psi(n_j), \forall n < n_j
\end{equation}
the existence of $n_j$ is also guaranteed by Lemma \ref{cor-1}. The sequence $\{n_j\}_{j\in\mathbb{N}}$ partitions natural numbers into different intervals.

\textbf{Construction of $g_{\psi}$:} We construct $g_{\psi}$ as follows.
\begin{itemize}
\item For $n=n_j$, we define $g_\psi(n_j):= \psi(n_j)$.
\item For $1\leq n<n_1$, we define $g_\psi(n)$ recursively: suppose $g_{\psi}(n+1)$ is defined then $g_\psi(n):= \min\{g_\psi(n+1),\psi(n)\}$.
\item For each $j>1$, and $n_j<n<n_{j+1}$, we study two auxiliary functions $f_{j}$ and $h_{j}$ before defining $g_\psi(n)$. Denote by
\begin{equation}\label{equ-b}
f_j(x) = \left(b_{\psi}-\frac{\epsilon}{j}\right)^{x-n_j} \psi(n_j), \quad h_j(x) = \left(b_{\psi}+\frac{\epsilon}{j+1}\right)^{x-n_{j+1}} \psi(n_{j+1}).
\end{equation}
It follows from \eqref{equ-a} and \eqref{equ-b} that
\begin{equation}\label{equ-a*(1)}
\psi(n)>f_j(n), \quad \mbox{if} \quad n \in (n_j,n_{j+1}],
\end{equation}
and
\begin{equation}\label{equ-a*(2)}
\psi(n)>h_j(n), \quad \mbox{if} \quad n \in [n_j,n_{j+1}).
\end{equation}
Moreover, it also follows from \eqref{equ-b} that
\begin{equation}\label{equ-c}
f_j(n_j) = \psi(n_j) \quad and \quad h_j(n_{j+1}) = \psi(n_{j+1}).
\end{equation}
Hence $f_j(n_j) > h_j(n_j)$ and $f_j(n_{j+1}) < h_j(n_{j+1})$. Thus there is a unique $\hat{n}_j\in [n_j, n_{j+1})$ such that $f_j(x)\geq h_j(x)$ whenever $n_j \leq n\leq \hat{n}_j$, while $f_j(\hat{n}_j + 1) < h_j(\hat{n}_j + 1)$. In fact, since
\begin{align*}
    f_j(n) &= \left(b_{\psi}-\frac{\epsilon}{j}\right)^{n-\hat{n}_j-1}\cdot f_j(\hat{n}_j+1)\\
    &\leq \left(b_{\psi}+\frac{\epsilon}{j+1}\right)^{n-\hat{n}_j-1}\cdot f_j(\hat{n}_j+1)\\
    &< \left(b_{\psi}+\frac{\epsilon}{j+1}\right)^{n-\hat{n}_j-1}\cdot h_j(\hat{n}_j+1)=h_j(n).
\end{align*}
we have $f_j(n) < h_j(n)$ once $n>\hat{n}_j$.

For $n\in (n_j, n_{j+1})$, based on $f_{j}(n)$ and $h_{j}(n)$, we define
\begin{equation}\label{equ-d}
g_\psi(n):= \max\{f_j(n), h_j(n)\} = \left\{ \begin{array}{ccc} f_j(n) & \mbox{if} & n_j<n\leq \hat{n}_j; \\ h_j(n) & \mbox{if} & \hat{n}_j < n < n_{j+1}. \end{array}\right.
\end{equation}
Since $\max\{f_j(n_j), h_j(n_j)\}=f_j(n_j)=\psi(n_j)$ and $\max\{f_j(n_{j+1}), h_j(n_{j+1})\}=h_j(n_{j+1})=\psi(n_{j+1})$, One can extend $g_{\psi}$ to $n\in [n_j, n_{j+1}]$, see Figure \ref{Figure1.3}.
\end{itemize}
\begin{figure}[htbp]
\centering
\includegraphics[width=14cm,height=14cm,keepaspectratio]{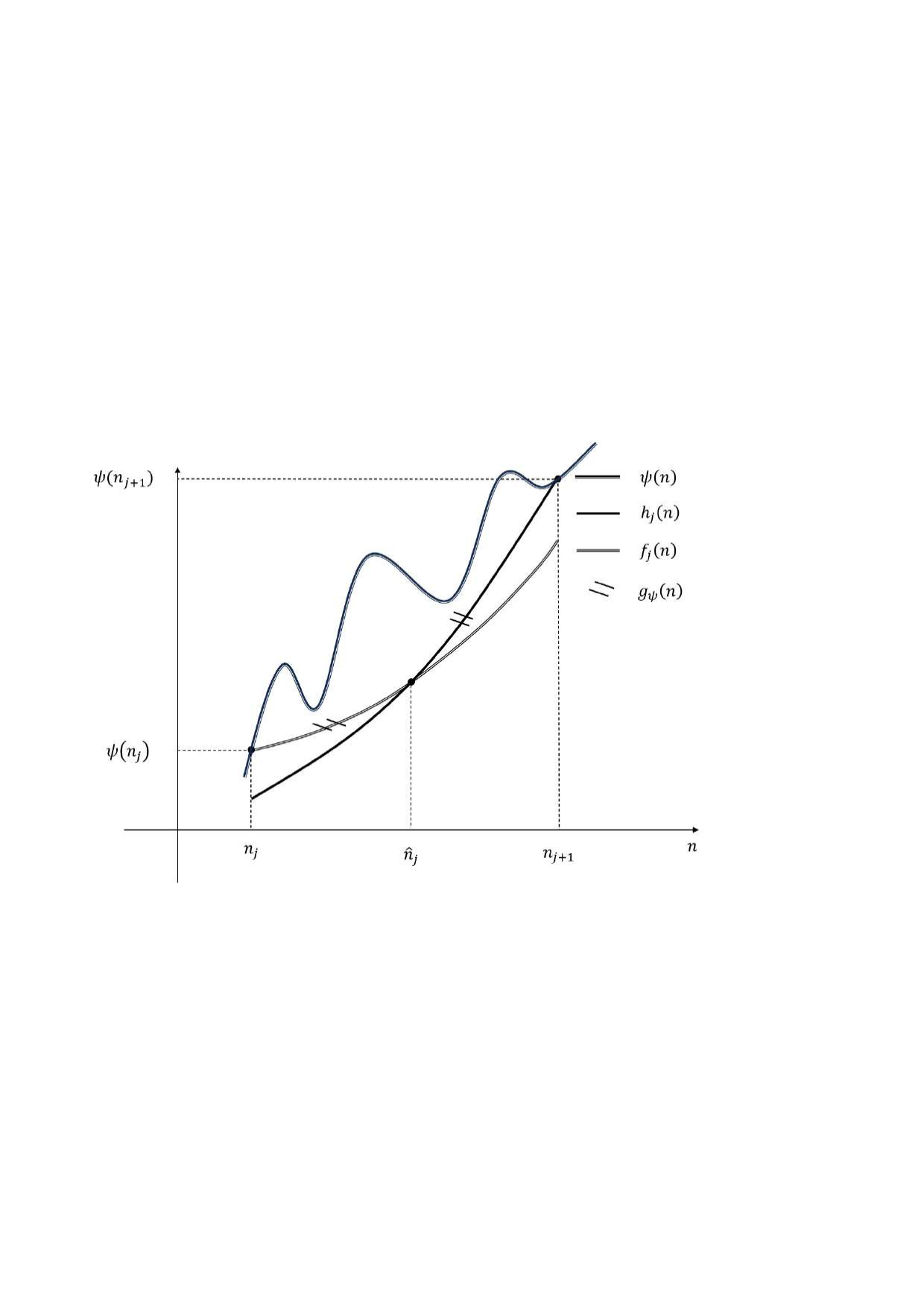}
\caption{Illustration of $g_{\psi}$ in terms of $h_{j}$ and $f_{j}$.}\label{Figure1.3}
\end{figure}
			
\textbf{Properties verification:} The properties verification of $g_{\psi}$ on $1\leq n<n_1$ is analogous to that in the proof of Proposition \ref{thm-2}, and thus we omit. In the rest of the proof, let's focus on properties verification of $g_\psi$ on a fixed interval $[n_j, n_{j+1})$, with $j\in \mathbb{N}_+$.
\begin{itemize}
\item Property $(a)$ directly follows from the fact that $g_\psi(n)\leq \psi(n)$, which is derived from \eqref{equ-a*(1)}, \eqref{equ-a*(2)}. Note that $g_\psi(n_j) = \psi(n_j)$, thus Property $(b)$ is also proved.
\item Regarding to showing $g_{\psi}$ is non-decreasing and Property $(c)$, one shall estimate $g_\psi(n+1)/g_\psi(n)$ for $n\in [n_j, n_{j+1})$. We split the estimation into three cases.
\begin{itemize}
\item ``when $n_j\leq n<\hat{n}_j$'': According to \eqref{equ-d},
\begin{equation}\label{equ_ratio1.3}
\frac{g_\psi(n+1)}{g_\psi(n)} = b-\frac{\epsilon}{j}.
\end{equation}
Since $\epsilon< b_{\psi}-1$, RHS of \eqref{equ_ratio1.3} $\geq 1$. Moreover, it tends to $b_{\psi}$ as $j\to \infty$.
\item ``when $\hat{n}_j < n < n_{j+1}$'': we have
\begin{equation}\label{equ_ratio1.32}
\frac{g_\psi(n+1)}{g_\psi(n)} = b_{\psi}+\frac{\epsilon}{j+1},
\end{equation}
with RHS of \eqref{equ_ratio1.32} $\geq1$ and tending to $b_{\psi}$ as $j\to\infty$.
\item ``when $n = \hat{n}_j$'': It can be estimated as follows: on one hand,
\begin{equation*}
\frac{g_\psi(\hat{n}_j+1)}{g_\psi(\hat{n}_j)} = \frac{h_j(\hat{n}_j+1)}{f_j(\hat{n}_j)} = \left(b_{\psi}-\frac{\epsilon}{j}\right)\cdot \frac{h_j(\hat{n}_j+1)}{f_j(\hat{n}_j+1)} > b_{\psi}-\frac{\epsilon}{j}.
\end{equation*}
On the other hand,
\begin{equation*}
\frac{g_\psi(\hat{n}_j+1)}{g_\psi(\hat{n}_j)} = \frac{h_j(\hat{n}_j+1)}{f_j(\hat{n}_j)} = \left(b_{\psi}+\frac{\epsilon}{j+1}\right)\cdot \frac{h_j(\hat{n}_j)}{f_j(\hat{n}_j)} \leq b_{\psi}+\frac{\epsilon}{j+1}.
\end{equation*}
Consequently, $g_\psi(\hat{n}_j+1)/g_\psi(\hat{n}_j) > 1$ and tends to $b_{\psi}$ as $j\to \infty$.
\end{itemize}
In summary, $g_\psi$ is non-decreasing, and $\lim\limits_{n\to\infty}g_\psi(n+1)/g_\psi(n)=b_{\psi}$, as we want.
\end{itemize}
Thus, we have verified Properties $(a),(b)$ and $(c)$, and the proof of Proposition \ref{thm-3} completes.
\end{proof}
		
\subsubsection{Proof of Proposition \ref{thm-4}}
\begin{proof}[Proof of Proposition \ref{thm-4}]
The proof is analogous to the proof of Proposition \ref{thm-3}. Choose inductively a sequence of indices $\{n_j\}_{j\in\mathbb{N}}$ as follows: fix $\epsilon>0$, and choose a positive integer $n_1$, such that
\begin{equation*}
\psi(n)>n \psi(n_1)/n_1, \forall n>n_1 \quad and \quad \psi(n) > (1+\epsilon)^{n-n_1} \psi(n_1), \forall n < n_1.
\end{equation*}
Suppose $n_1<\cdots<n_{j-1}$ is chosen, define $n_j>n_{j-1}$, such that
\begin{equation}\label{equ-metal}
\psi(n)>n \psi(n_j)/n_j, \forall n>n_j \quad and \quad \psi(n) > (1+1/n_{j-1})^{n-n_j} \psi(n_j), \forall n < n_j
\end{equation}
The existence of $\{n_j\}_{j\in\mathbb{N}}$ is guaranteed by Lemma \ref{cor-1}.

\textbf{Construction of $g_{\psi}$:} We construct $g_{\psi}$ as follows.
\begin{itemize}
\item For $n=n_j$, define $g_\psi(n_j):= \psi(n_j)$.
\item For $1\leq n<n_1$, $g_\psi(n)$ is defined inductively: suppose $g_{\psi}$ is already defined, define $g_\psi(n):= \min\{g_\psi(n+1),\psi(n)\}, 1\leq n < n_1$.
\item For each $j\geq 1$, and $n_{j}<n<n_{j+1}$, we study two auxiliary functions $f_j, h_j$. Denote by
\begin{equation}\label{equ-wood}
f_j(x):= \frac{x\psi(n_j)}{n_j} \quad \quad h_j(x):= \left(1+\frac{1}{n_j}\right)^{x-n_{j+1}} \psi(n_{j+1}).
\end{equation}
Hence, \eqref{equ-metal} implies that				
\begin{equation}\label{equ-metal(1)}
f_j(n) < \psi(n), \quad \mbox{if} \quad n\in (n_j, n_{j+1}]
\end{equation}
and
\begin{equation}\label{equ-metal(2)}
h_j(n) < \psi(n), \quad \mbox{if} \quad n\in [n_j, n_{j+1}).
\end{equation}
Moreover, it is also from \eqref{equ-wood} that
$$
f_j(n_j) = \psi(n_j) \quad\mbox{and}\quad h_j(n_{j+1}) = \psi(n_{j+1}).
$$
Combining this with \eqref{equ-metal(1)}, \eqref{equ-metal(2)}, it implies that there is $\hat{n}_j \in [n_j, n_{j+1})$, such that
\begin{equation}\label{equ-water}
f_j(n)\geq h_j(n) \quad \mbox{whenever}, \quad n_j \leq n\leq \hat{n}_j,
\end{equation}
while
\begin{equation}\label{equ-fire}
f_j(\hat{n}_j + 1) < h_j(\hat{n}_j + 1).
\end{equation}
Together with the Bernoulli's inequality\footnote{Bernoulli's inequality: for real numbers $x>-1, a\geq 1$
        \begin{equation}\label{e.Bernoulli's}
            (1+x)^a\geq 1+ax.
        \end{equation}
        },  \eqref{equ-fire} further implies that
\begin{align*}
   h_j(n)
   &=(1+\frac{1}{n_j})^{n-\hat{n}_j-1}h_j(\hat{n}_j+1)\\
   &>(1+\frac{1}{n_j})^{n-\hat{n}_j-1}f_j(\hat{n}_j+1)\\
   &=(1+\frac{1}{n_j})^{n-\hat{n}_j-1}\frac{\hat{n}_j+1}{n}f_j(n)\\
   &\geq (1+(n-\hat{n}_j-1)\frac{1}{n_j})\frac{\hat{n}_j+1}{n}f_j(n)\\
   &\geq (1+(n-\hat{n}_j-1)\frac{1}{\hat{n}_j+1})\frac{\hat{n}_j+1}{n}f_j(n)=f_j(n)\\
\end{align*}
\begin{equation}\label{equ-soil}
h_j(n) > f_j(n) \quad \text{if} \quad \hat{n}_j < n < n_{j+1}.
\end{equation}
Based on $f_{j}(n)$ and $h_{j}(n)$, define
\begin{equation}\label{equ-heaven}
g_\psi(n):= \max\{f_j(n), h_j(n)\} = \left\{ \begin{array}{ccc} f_j(n) & \mbox{if} & n_j<n\leq \hat{n}_j; \\ h_j(n) & \mbox{if} & \hat{n}_j < n < n_{j+1}, \end{array}\right.
\end{equation}
Since $\max\{f_j(n_j), h_j(n_j)\}=f_j(n_j)=\psi(n_j)$ and $\max\{f_j(n_{j+1}), h_j(n_{j+1})\}=h_j(n_{j+1})=\psi(n_{j+1})$, One can extend $g_{\psi}$ to $n\in [n_j, n_{j+1}]$,
see Figure \ref{Figure1.4}.
\end{itemize}

\begin{figure}[htbp]
\centering\includegraphics[width=14cm,height=14cm,keepaspectratio]{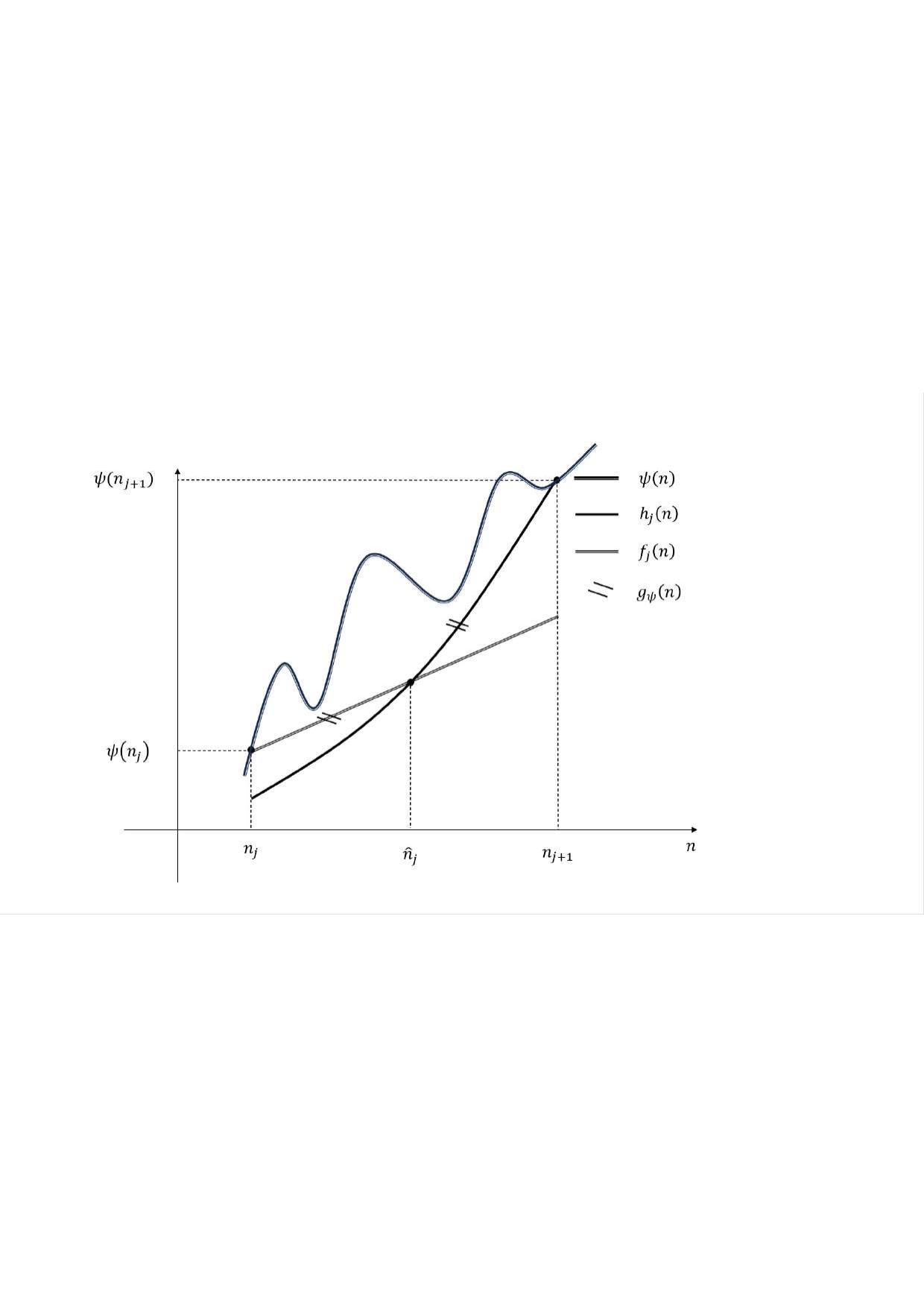}
\caption{Illustration of $g_{\psi}$ in terms of $h_{j}$ and $f_{j}$.}\label{Figure1.4}
\end{figure}

\textbf{Properties verification:}
\begin{itemize}
\item Property $(a)$ follows \eqref{equ-metal(1)} and \eqref{equ-metal(2)}.
\item Property $(b)$ follows from the observation that $g_\psi(n_j) = \psi(n_j)$.
\item Regarding to Property $(c)$, note that
\begin{equation}
\frac{g_\psi(n)}{n} \geq \frac{f_j(n)}{n} = \frac{\psi(n_j)}{n_j} \to \infty \quad as \quad j\to \infty,
\end{equation}
which derives property $(c)$.
\item Regarding to showing that $g_\psi(n)$ is indeed non-decreasing and Property $(d)$, we estimate $g_\psi(n+1)/g_\psi(n)$ for $n\in [n_j, n_{j+1})$ as follows.
\begin{itemize}
\item If $n_j\leq n<\hat{n}_j$, then
\begin{equation*}
\frac{g_\psi(n+1)}{g_\psi(n)} = \frac{n+1}{n};
\end{equation*}
\item if $\hat{n}_j<n\leq n_{j+1}$, then
\begin{equation*}
\frac{g_\psi(n+1)}{g_\psi(n)} = 1+\frac{1}{n_j}.
\end{equation*}
\end{itemize}
In both cases, we have $g_\psi(n+1)/g_\psi(n)>1$ and $g_\psi(n+1)/g_\psi(n)\to 1$ as $j\to\infty$. Eventually, for $n=\hat{n}_j$, we have
\begin{equation*}
\frac{g_\psi(\hat{n}_j+1)}{g_\psi(\hat{n}_j)} = \frac{h_j(\hat{n}_j+1)}{f_j(\hat{n}_j)} = \frac{\hat{n}_j + 1}{\hat{n}_j} \cdot \frac{h_j(\hat{n}_j+1)}{f_j(\hat{n}_j+1)} > \frac{\hat{n}_j + 1}{\hat{n}_j}
\end{equation*}
and
\begin{equation*}
\frac{g_\psi(\hat{n}_j+1)}{g_\psi(\hat{n}_j)} = \frac{h_j(\hat{n}_j+1)}{f_j(\hat{n}_j)} = \left(1+\frac{1}{n_j}\right) \cdot \frac{h_j(\hat{n}_j)}{f_j(\hat{n}_j)} \leq 1+\frac{1}{n_j}.
\end{equation*}
Thus, we have $g_\psi(n)$ is indeed non-decreasing and Property $(d)$.
\end{itemize}
Thus we have verified Properties $(a),(b),(c)$ and $(d)$, and the proof of Proposition \ref{thm-4} is completed.
\end{proof}

{\bf Acknowledgements:}
L. Fang was partially supported by NSFC (No. 11801591). C. G. Moreira was partially supported by CNPq and FRPERJ. Y. Zhang was partially supported by NSFC Nos. 12161141002, 12271432, and Guangdong Basic and Applied Basic Research Foundation No. 2024A1515010974.


\begin{thebibliography}{99}


\bibitem{AF} R. Adler and L. Flatto, {\it The backward continued fraction map and geodesic flow}, Ergodic Theory Dynam. Systems 4 (1984), 487--492.

\bibitem{BS} L. Barreira and B. Saussol, {\it Variational principles and mixed multifractal spectra}, Trans. Amer. Math. Soc. 353 (2001), 3919--3944.




\bibitem{BL} C. Bjorklund and M. Litman, {\it Error approximation for backwards and simple continued fractions}, Res. Number Theory 10 (2024), Paper No. 2, 26 pp.

\bibitem{Cli} V. Climenhaga, {\it The thermodynamic approach to multifractal analysis}, Ergodic Theory Dynam. Systems 34 (2014), 1409--1450.

\bibitem{Fal} K. Falconer, {\it Fractal Geometry: Mathematical Foundations and Applications}, John Wiley \& Sons, Chichester, 1990.

\bibitem{FJLR} A. Fan, T. Jordan, L. Liao and M. Rams, {\it Multifractal analysis for expanding interval maps with infinitely many branches}, Trans. Am. Math. Soc. 367 (2015), 1847--1870.

\bibitem{FLWW09} A. Fan, L. Liao, B. Wang and J. Wu, {\it On Khintchine exponents and Lyapunov exponents of continued fractions}, Ergod. Theor. Dyn. Syst. 29 (2009), 73--109.

\bibitem{FLWW13} A. Fan, L. Liao, B. Wang and J. Wu, {\it On the fast Khintchine spectrum in continued fractions}, Monatsh. Math. 171 (2013), 329--340.

\bibitem{FSW} L. Fang, L. Shang and M. Wu, \emph{On upper and lower fast Khintchine spectra of continued fractions}, Forum Math. 34 (2022), 821--830.


\bibitem{FLW} D.-J. Feng, K.-S. Lau and J. Wu, {\it Ergodic limits on the conformal repellers}, Adv. Math. 169 (2002), 58--91.



\bibitem{GR1} K. Gelfert and M. Rams, {\it Geometry of Limit Sets for Expansive Markov Systems}, Transactions of the American Mathematical Society, 361 (2009)  2001--2020.


\bibitem{GR} K. Gelfert and M. Rams, {\it The Lyapunov spectrum of some parabolic systems}, Ergodic Theory Dynam. Systems 29 (2009), 919--940.

\bibitem{GH} K. Gr\"{o}chenig and A. Haas, {\it Backward continued fractions, Hecke groups and invariant measures for transformations of the interval}, Ergodic Theory Dynam. Systems 16 (1996), 1241--1274.

\bibitem{Iommi} G. Iommi, {\it Multifractal analysis of the Lyapunov exponent for the backward continued fraction map}, Ergodic Theory Dynam. Systems 30 (2010), 211--232.



\bibitem{IJ} G. Iommi and  T. Jordan, {\it Multifractal analysis of Birkhoff averages for countable Markov maps}, Ergodic Theory Dynam. Systems 35 (2015), 2559--2586.


\bibitem{JT} J. Jaerisch and H. Takahasi, {\it Mixed multifractal spectra of Birkhoff averages for non-uniformly expanding one-dimensional Markov maps with countably many branches}, Adv. Math. 385 (2021), Paper No. 107778, 45 pp.



\bibitem{JJOP} A. Johansson, T. Jordan, A. \"{O}berg and M. Pollicott, {\it Multifractal analysis of non-uniformly hyperbolic systems}, Israel J. Math. 177 (2010), 125--144.






\bibitem{KMS} M. Kesseb\"{o}hmer, S. Munday and B. Stratmann, {\it Strong renewal theorems and Lyapunov spectra for $\alpha$-Farey and $\alpha$-L\"{u}roth systems}, Ergodic Theory Dynam. Systems 32 (2012), 989--1017.

\bibitem{KS07} M. Kesseb\"{o}hmer and B. Stratmann, {\it A multifractal analysis for Stern-Brocot intervals, continued fractions and Diophantine growth rates}, J. Reine Angew. Math. 605 (2007), 133--163.





\bibitem{LR} L. Liao, M. Rams, {\it Upper and lower fast Khintchine spectra in continued fractions}, Monatsh Math, 180 (2016), 65--81.


\bibitem{MU} R. Mauldin, and M. Urba\'{n}ski, {\it Parabolic iterated function systems}, Ergodic Theory Dynam. Systems 20 (2000), 1423--1447.


\bibitem{Nak} K. Nakaishi, {\it Multifractal formalism for some parabolic maps}, Ergodic Theory Dynam. Systems 20 (2000), 843--857.



\bibitem{PW97} Y. Pesin and H. Weiss, {\it A multifractal analysis of equilibrium measures for conformal expanding maps and Moran-like geometric constructions}, J. Stat. Phys. 86 (1997), 233--275.



\bibitem{PS} C. Pfister and W. Sullivan, {\it On the topological entropy of saturated sets}, Ergodic Theory Dynam. Systems 27 (2007), 929--956.



\bibitem{PW} M. Pollicott and H. Weiss, {\it Multifractal analysis of Lyapunov exponent for continued fraction and Manneville--Pomeau transformations and applications to Diophantine approximation}, Comm. Math. Phys. 207 (1999), 145--171.

\bibitem{Renyi} A. R\'{e}nyi, {\it On algorithms for the generation of real numbers}, Magyar Tud. Akad. Mat. Fiz. Oszt. K\"{o}zl. 7 (1957), 265--293.


\bibitem{Rush} T. Rush, {\it Multifractal analysis for Markov interval maps with countably many branches}, Nonlinearity 36 (2023), 2038--2073.



%\bibitem{Sar} O. Sarig, {\it Thermodynamic formalism for countable Markov shifts}, Ergodic Theory Dynam. Systems 19 (1999), 1565--1593.


\bibitem{Sar09} O. Sarig, {\it Lecture Notes on Thermodynamic Formalism for Topological Markov Shifts}, Penn State, 2009.



\bibitem{Tak} H. Takahasi, {\it Large deviation principle for the backward continued fraction expansion}, Stochastic Process. Appl. 144 (2022), 153--172.

\bibitem{TV03} F. Takens and E. Verbitskiy, {\it On the variational principle for the topological entropy of certain non-compact sets}, Ergodic Theory Dynam. Systems 23 (2003), 317--348.


\bibitem{Wei} H. Weiss. {\it The Lyapunov spectrum for conformal expanding maps and axiom-A surface diffeomorphisms}, J. Statist. Phys. 95 (1999), 615--632.


\end{thebibliography}
\end{document}